\DeclareMathOperator{\tr}{Tr}
\def\restriction#1#2{\mathchoice
              {\setbox1\hbox{${\displaystyle #1}_{\scriptstyle #2}$}
              \restrictionaux{#1}{#2}}
              {\setbox1\hbox{${\textstyle #1}_{\scriptstyle #2}$}
              \restrictionaux{#1}{#2}}
              {\setbox1\hbox{${\scriptstyle #1}_{\scriptscriptstyle #2}$}
              \restrictionaux{#1}{#2}}
              {\setbox1\hbox{${\scriptscriptstyle #1}_{\scriptscriptstyle #2}$}
              \restrictionaux{#1}{#2}}}
\def\restrictionaux#1#2{{#1\,\smash{\vrule height .8\ht1 depth .85\dp1}}_{\,#2}} 
\newcounter{ccorollary}
\newtheorem{corollaire}[ccorollary]{Corollary}
\pgfplotsset{compat=1.11}
\begin{document}

%%%%%%%%%%%%%%%%%%%%% Publisher's Area please ignore %%%%%%%%%%%%%%%
%
\catchline{}{}{}{}{}
%
%%%%%%%%%%%%%%%%%%%%%%%%%%%%%%%%%%%%%%%%%%%%%%%%%%%%%%%%%%%%%%%%%%%%

\title{Sharp Bounds for the Concentration of the Resolvent \\ in Convex Concentration Settings}

\author{Cosme Louart\footnote{
GIPSA-lab.}}

\address{GIPSA-lab, 11 rue des Mathématiques,
38401 St Martin d'Hères\\ 
\email{cosmelouart@gmail.com} }

\maketitle

\begin{history}
\received{(26/12/2021)}
%\revised{(Day Month Year)}
\end{history}

\begin{abstract}
Considering random matrix $X \in \mathcal M_{p,n}$ with independent columns satisfying the convex concentration properties issued from a famous theorem of Talagrand, we express the linear concentration of the resolvent $Q = (I_p - \frac{1}{n}XX^T) ^{-1}$ around a classical deterministic equivalent with a good observable diameter for the nuclear norm. The general proof relies on a decomposition of the resolvent as a series of powers of $X$.
%We propose here to study the concentration of random objects that are implicitly formulated as fixed points to equations $Y = f(X)$ where $f$ is a random mapping. Starting from an hypothesis taken from the concentration of the measure theory, we are able to express precisely the concentration of such solutions, under some contractivity hypothesis on $f$. This statement has important implication to random matrix theory, and is at the basis of the study of some optimization procedures like the logistic regression for instance. In those last cases, we give precise estimations to the first statistics of the solution $Y$ which allows us predict the performances of the algorithm.
\end{abstract}

\keywords{random matrix theory ; concentration of measure ; convariance matrices ; convex concentration ; concentration of products and infinite series ; Hanson-Wright Theorem.}

\ccode{Mathematics Subject Classification 2000: 15A52, 60B12, 62J10}

% \section{Convex concentration Hypotheses}
\section*{Introduction}
Initiated by Milman in the 70s \cite{MIL71}, the theory of concentration of the measure provided a wide range of concentration inequalities, mainly concerning continuous distribution (i.e. with no atoms), in particular thanks to the beautiful interpretation with the bound on the Ricci curvature \cite{GRO79}. To give a simple fundamental example (more examples can be found in \cite{LED05}), a random vector $Z \in \mathbb R^n$ having independent and Gaussian entries with unit variance satisfies for any $1$-Lipschitz mapping $f: \mathbb R^n \to \mathbb R$:
\begin{align}\label{eq:concentration_lipschitz}
  \forall t >0: \ \ \mathbb P \left( \left\vert f(Z) - \mathbb E[f(Z)] \right\vert \geq t \right) \leq 2 e^{-t^2/2}.
\end{align}
This inequality is powerful for two reasons: first, it is independent on the dimension $n$, second it concerns any $1$-Lipschitz mapping $f$. It is then interesting to formalize this behavior to introduce a class of ``Lipschitz concentrated random vectors'' satisfying the same concentration inequality as the Gaussian distribution (in particular, all the Lipschitz transformation of a Gaussian vector). This was done in several books and papers where this approach proved its efficiency in the study of random matrices \cite{TAO12,ELK09,LOU21RHL}...

% The independence towards the dimension of this inequality and the 
We want here to extend those results to a new class of concentrated vectors discovered by  Talagrand in \cite{TAL95}. Although the concentration result looks similar, its nature is quite different as it concerns bounded distributions for which classical tools of differential geometry do not operate. In a sense, it could be seen as a combinatorial result of concentration.
Given a random vector $Z \in [0,1]^n$ with independent entries, this result sets that for any $1$-Lipschitz \emph{and convex} mapping $f: [0,1] \to \mathbb R$:
\begin{align}\label{eq:concentration_convexe}
  \forall t >0: \ \ \mathbb P \left( \left\vert f(Z) - \mathbb E[f(Z)] \right\vert \geq t \right) \leq 2 e^{-t^2/4}.
\end{align}
We can mention here the recent results of \cite{Hua21} that extend this kind of inequalities for random vectors with independent and subgaussian entries.
Adopting the terminology of \cite{VU2014,MEC11,ADA11}, we call those vectors convexly concentrated random vector (see Definition~\ref{def:cnocentration_convexe} below).
The convexity required for the observations to concentrate makes the discussion on convexly concentrated random vector far more delicate. There is no more stability towards Lipschitz transformations and given a convexly concentrated random vector $Z$, just its affine transformations are sure to be concentrated. 
This issue raises naturally for one of the major objects of random matrix theory, namely the resolvent $Q^z = (zI_n - X)^{-1}$ that can provide important eigen properties on $X$. %whose concentration raises several issues in the case of convex concentration. 
In the case of convex concentration, the concentration of the resolvent $Q^z = (zI_n - X)^{-1}$ is no more a mere consequence of a bound on its differential on $X\in \mathcal{M}_{p}$. Still, as first shown by \cite{Gui00}, it is possible to obtain concentration properties on the sum of Lipschitz functionals of the eigen values. Here we pursue the study, looking at \textit{linear} concentration properties of $Q^z$ for which similar inequalities to \eqref{eq:concentration_lipschitz} or~\eqref{eq:concentration_convexe} are only satisfied by $1$-Lipschitz \emph{and linear} functionals $f$. 
% This is an extension of the result of \cite{Gui00} since t
The well known identity
\begin{align}\label{eq:importance_stieltjes_transform}
  \frac{1}{p}\sum_{\lambda \in \text{Sp}(X)} f(\lambda) = - \frac{1}{2i\pi} \oint_\gamma f(z) \tr(Q^z)dz,
\end{align}
is true for any analytical mapping $f$ defined on the interior of a path $\gamma \in \mathbb C$ containing the spectrum of $X$ (or any limit of such mappings), therefore, our results on the concentration of $Q^z$ concern in particular the quantities studied in \cite{Gui00}.

% Note that $\tr(Q^z)$ is only a particular linear functional of $Q^z$, we prove in the paper the concentration of any $\tr(AQ^z)$, for $A$ deterministic and provide a deterministic equivalent.

Although it is weaker\footnote{Lipschitz concentrated random vectors are convexly and linearly concentrated, convexly concentrated random vectors are linearly concentrated.}, the class of linearly concentrated vectors behaves very well towards the dependence and the sum and allows us to obtain the concentration of the resolvent expressing it as a sum $Q^z = \frac{1}{z}\sum_{i=1}^\infty (X/z)^i$. The linear concentration of the powers of $X$ was justified in\footnote{We provide an alternative proof in the appendix.} \cite{MEC11} in the case of convexly concentrated random matrix $X$. We call this weakening of the concentration property ``the degeneracy of the convex concentration through multiplication''. The linear concentration of the resolvent is though sufficient for most practical applications that rely on an estimation of the Stieltjes transform $m(z) = \frac{1}{p} \tr(Q^z)$.% and estimation of the projection on a signal $u$: $u^T Q^zu$).
%This method is quite elegant but it does not allow us to express the concentration of $Q^z = (zI_n - X)^{-1}$ when $z$ is inside a disc containing $\mathbb E[\|X\|]$ and centered on $0$. 
%For those values, we employ a laborious technique that decomposes each linear functional of $Q^z$ into a sum of Lipschitz and convex mappings.

We present below our main contribution without the useful but non-standard formalism introduced in the rest of the article. It concerns the concentration and the estimation of\footnote{The concentration of $ (zI_p - \frac{1}{\sqrt n}X)^{-1}$ for a positive symmetric matrix $X \in \mathcal{M}_{p}$ is immediate from our approach. The estimation of its expectation is more laborious and goes beyond the scope of the paper although it can be obtained with the same tools.}
\begin{align*}
   Q^z \equiv \left( zI_p - \frac{1}{n}XX^T \right)^{-1}
 \end{align*} for a random matrix $X \in \mathcal{M}_{p,n}$.
Following the formalism of the random matrix theory, the computable estimation of $\mathbb E[Q^z]$ will be called a ``deterministic equivalent''.
Its definition relies on a well known result that states that given a family of symmetric matrices $\Sigma = (\Sigma_1,\ldots, \Sigma_n) \in \mathcal{M}_{p}^n$, there exists a unique vector $\tilde \Lambda_\Sigma^z \in \mathbb C^n$ satisfying:
  \begin{align*}
    \forall i\in [n]:&
    &[\tilde \Lambda_\Sigma^z]_i = \frac{1}{n} \tr \left( \Sigma_i \tilde Q_\Sigma^{\tilde \Lambda_\Sigma^z} \right)&
    &\text{with } \ \tilde Q_{\Sigma}^{\tilde \Lambda_\Sigma^z} = \left( zI_p - \frac{1}{n}  \sum_{j=1}^n \frac{\Sigma_j}{1 - [\tilde \Lambda_\Sigma^z]_j} \right)^{-1}.
  \end{align*}

With those notations at hand, let us state:
\begin{theorem}[Concentration of the resolvent]\label{the:concentration_resolvent_main}
  Considering two sequences $(p_n)_{n\in \mathbb N} \in \mathbb N^{\mathbb N}$, $(\sigma_n) \in \mathbb R_+^{\mathbb N}$ and four constants $c,C,K, \gamma >0$, we suppose that we are given for any $n\in \mathbb N$ a random matrix: $X_n = (x_1^{(n)}, \ldots, x_n^{(n)})\in \mathcal M_{p_n,n}$ such that 
  % $\sup_{i,j\in [n]} \|\mathbb E[x_i^{(j)}] \| \leq K$ and
  \begin{itemize}
    \item $p_n \leq \gamma n$
    \item for all $n \in \mathbb N$, $x_n^{(1)}, \ldots, x_n^{(n)}$ are independent,
    \item $\sup_{n \in \mathbb N,j\in [n]} \left\Vert \mathbb E \left[ x_n^{(j)}  \right] \right\Vert \leq K \sqrt n$
    \item for any $n \in \mathbb N$ quasi-convex mapping $g: \mathcal M_{n,p_n}^{m_n} \to \mathbb R$, $1$-Lipschitz for the euclidean norm:
  \begin{align*}
    \mathbb P \left( \left\vert g(X_n) - \mathbb E \left[ g(X_n) \right] \right\vert \geq t\right) \leq C e^{-c(t/\sigma_n)^2}.
  \end{align*}
   \end{itemize} 
  % $\sup_{1\leq j \leq m_n} \|X_n^{(j)}\| \leq \kappa_n$ and 
  Then for any constant $\varepsilon>0$, there exist two constants $c',C'>0$ such that for all $n \in \mathbb N$, for any deterministic matrix $A \in \mathbb R^n$ such that
  % \footnote{If one only assume that $\|A\|\leq 1$, one still has the concentration $\mathbb P \left( \left\vert  \tr \left( \left(Q^z - \mathbb E \left[ Q^z \right] \right) A \right) \right\vert \geq t\right) \leq C' \exp \left( -c' \left( \frac{t}{ \sigma } \right)^2 \right) + C' e^{-c'n},$ for some constants $C,c>0$} 
  $\|A\| \leq 1$ and for any $z \in \mathbb C$, such that $d(z, \text{Sp}(\frac{1}{n}XX^T)) \geq \varepsilon$:
  \begin{align*}
    \mathbb P \left( \left\vert  \tr \left( \left(Q^z - \tilde Q_\Sigma^{\tilde \Lambda_\Sigma^z} \right) A \right) \right\vert \geq t\right) \leq C' e^{ -c' (t/\sigma_n)^2 }  + C' e^{-c'n},
  \end{align*}
    % where $Q^z \equiv (zI_p - \frac{1}{n}XX^T)^{-1}$ 
    where $\Sigma_i = \mathbb E[x_ix_i^T]$. Besides there exists a constant $K>0$ such that for all $n \in \mathbb N$:
    \begin{align*}
      \left\Vert \mathbb E [Q^z ] - \tilde Q_\Sigma^{\tilde \Lambda_\Sigma^z} \right\Vert_F \leq \frac{K}{\sqrt n}.
    \end{align*}
  % where $\odot$ designates the entry-wise product. The term ``$2e$'' disappears when $X_1 = \cdots = X_n$.
\end{theorem}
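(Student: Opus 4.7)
The plan is to split the theorem into two statements that are combined by the triangle inequality: a concentration inequality for $\tr(Q^z A)$ around its own mean, and a deterministic Frobenius estimate of $\mathbb{E}[Q^z] - \tilde Q_\Sigma^{\tilde \Lambda_\Sigma^z}$. I would handle them in this order because the deterministic-equivalent argument crucially invokes the concentration result at an intermediate step.

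For the concentration step, the main obstacle is that $X \mapsto Q^z$ is not globally Lipschitz, so the convex concentration of $X$ does not propagate directly. Following the strategy announced in the introduction, I would exploit the geometric expansion
\begin{align*}
Q^z = \frac{1}{z} \sum_{k=0}^{\infty} \left(\frac{XX^T}{nz}\right)^{k},
\end{align*}
valid on the high-probability event $E = \{\|XX^T/n\| \leq M\}$ as soon as $|z| > M$. The event $E$ has probability at least $1 - Ce^{-cn}$ because $X \mapsto \|X\|$ is convex and $1$-Lipschitz for the Frobenius norm, combined with $\|\mathbb{E}[x_j]\| \leq K\sqrt n$. Each scalar $\tr((XX^T)^k A)$ is a polynomial in the entries of $X$ and is linearly concentrated by the \emph{degeneracy of convex concentration through multiplication} (\cite{MEC11}, with the alternative proof in the appendix); summing those inequalities after a judicious truncation of the series yields concentration of $\tr(Q^z A)$ for $|z| > M$. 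To extend to arbitrary $z$ with $d(z,\Sp(XX^T/n)) \geq \varepsilon$ on $E$, I would use Cauchy's integral formula along a contour encircling the spectrum at distance $\varepsilon/2$ and nested inside a circle $|\zeta|=R > M$ on which the series converges: the concentration of $\tr(Q^\zeta A)$ on the outer boundary transfers to the interior point $z$ through a uniform integral bound.

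For the deterministic equivalent, I would resort to the Schur-complement identity
\begin{align*}
Q^z x_i = \frac{Q_{-i}^z x_i}{1 - \tfrac{1}{n} x_i^T Q_{-i}^z x_i},
\end{align*}
where $Q_{-i}^z$ is the resolvent of $\tfrac{1}{n}\sum_{j \neq i} x_j x_j^T$, independent of $x_i$. Starting from $zQ^z - I = \tfrac{1}{n}\sum_i x_i x_i^T Q^z$ and substituting this identity, taking expectations, and invoking the concentration from the first step to replace $\tfrac{1}{n} x_i^T Q_{-i}^z x_i$ by its mean $\tfrac{1}{n}\tr(\Sigma_i \mathbb{E}[Q_{-i}^z])$ up to an error of order $\sigma_n/\sqrt n$, one obtains that the vector $\lambda_i := \tfrac{1}{n}\tr(\Sigma_i \mathbb{E}[Q^z])$ approximately satisfies the defining fixed-point system of $\tilde \Lambda_\Sigma^z$. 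Standard rank-one perturbation bounds transfer the estimate from $Q_{-i}^z$ back to $Q^z$, and the stability of the fixed-point map on the resolvent set then produces the claimed $O(1/\sqrt n)$ Frobenius bound.

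The hardest part is the analytic transport in the concentration step: the geometric series diverges as $z$ approaches the spectrum, so one must carry the estimate from the large-$|z|$ regime down to the boundary region by means of the contour, while keeping the exponent in the Gaussian tail independent of $z$ (up to $\varepsilon$-dependent multiplicative constants). A related subtlety is that the concentration speed of $\tr((XX^T)^k A)$ degrades with the degree $k$, so the truncation index has to grow with $n$ in a controlled way to preserve an overall speed of order $\sigma_n$; this degradation is what ultimately forces the additive $C'e^{-c'n}$ term in the tail bound.
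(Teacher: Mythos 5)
Your large-$|z|$ regime (geometric series on the event $\{\|\frac{1}{n}XX^T\|\leq M\}$, concentration of each matrix power via the degeneracy of convex concentration through products) and your treatment of the deterministic equivalent (Schur identity, leave-one-out resolvents, approximate fixed point and its stability) follow essentially the paper's route. The genuine gap is the step that is supposed to carry the concentration from $|z|>M$ down to all $z$ with $d(z,\Sp(\frac{1}{n}XX^T))\geq\varepsilon$ by Cauchy's formula. The map $\zeta\mapsto\tr(Q^\zeta A)$ is analytic only off the (random) spectrum, so any contour representation of its value at a point $z$ lying between the spectrum and the circle $|\zeta|=R$ must use \emph{both} components of the boundary: the outer circle and an inner contour at distance $O(\varepsilon)$ from the spectrum. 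The values on that inner contour are precisely the ones whose concentration you have not yet established, so the ``transfer'' is circular; using the outer circle alone would require analyticity in the whole disk, which fails, and a maximum-principle variant has the same defect. More fundamentally, concentration is not a property one can analytically continue: tail bounds for $\tr(Q^\zeta A)$ on $|\zeta|=R$ give no tail bound at a fixed interior $z$ without control of the full boundary.

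The paper avoids this issue by a different expansion in the near-spectrum regime (Proposition~\ref{pro:Concentration_lineaire_Q_proche_spectre}): for $|z|\leq\rho\equiv\nu+\varepsilon$ it writes, with $|Q^z|^2\equiv(\Im(z)^2+(\Re(z)-\frac{1}{n}XX^T)^2)^{-1}$, the decomposition $Q^z=(\Re(z)-\frac{1}{n}XX^T)|Q^z|^2-\Im(z)|Q^z|^2$ and expands
\begin{align*}
|Q^z|^2=\frac{1}{\rho^2}\sum_{m=0}^{\infty}\left(1-\frac{\Im(z)^2}{\rho^2}-\frac{(\Re(z)-\frac{1}{n}XX^T)^2}{\rho^2}\right)^m,
\end{align*}
where, under $\mathcal A_\varepsilon$, the base of the series has spectral norm at most $1-d(z,\mathcal S)^2/\rho^2\leq 1-\varepsilon^2/\rho^2<1$ since $d(z,\mathcal S)^2=\Im(z)^2+d(\Re(z),\mathcal S)^2$. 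One is then again summing matrix polynomials in $X$, each linearly concentrated by Theorem~\ref{the:concentration_lineaire_produit_matrice_convexement_concentres}, and Corollary~\ref{cor:concentration_serie_vecteur_lineairement_concentres} applies. Two smaller corrections to your sketch: no truncation of the series is needed, because the observable diameters of the successive terms decay geometrically and are summable, so the series corollary applies directly; and the additive term $C'e^{-c'n}$ does not come from a truncation error but from $\mathbb P(\mathcal A_\varepsilon^c)\leq Ce^{-cn}$ (Lemma~\ref{lem:P_A_overwhelming}), i.e.\ from conditioning on the event where the eigenvalues stay near their means and $\|\frac{1}{n}XX^T\|$ is bounded.
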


This theorem allows us to get good inferences on the eigen values distribution through the identity \eqref{eq:importance_stieltjes_transform} and the estimation of the Stieltjes transform $g(z) \equiv -\frac{1}{p} \tr (Q^z) $ satisfying the concentration inequality:
\begin{align*}
  \mathbb P \left( \left\vert g(z) + \frac{1}{n} \tr \left( \tilde Q^{\tilde \Lambda^z} \right) \right\vert \geq t \right)\leq Ce^{-c p^2 t^2},
\end{align*}
for two constants $C,c>0$ (and for $d(z, \text{Sp}(\frac{1}{n}XX^T)) \geq O(1)$).

When the distribution of the spectrum of $\frac{1}{n}XX^T$ presents different bulks, this theorem also allows us to understand the eigen-spaces associated to those different bulks. Indeed, considering a path $\gamma \in \mathbb C$ containing a bulk of eigen-values $B \subset \text{Sp}(\frac{1}{n}XX^T)$, if we note $E_B$ the associated random eigen-space and $\Pi_B$ the orthogonal projector on $E_B$, then for any deterministic matrix $A \in \mathcal M_{p}$:
 \begin{align}\label{eq:estimation_eigenspaces}
   \tr(\Pi_BA) = -\frac{1}{2i\pi}\int_\gamma \tr(AQ^z) dz&
   % &\text{ with } R(z) \equiv \left(\frac{1}{n}XX^T - z I_p\right)^{-1}.
 \end{align}
  we can estimate this projection $\Pi_B$ defining $E_B$ thanks to the concentration inequality\footnote{for the concentration to be valid on all the values of the path $\gamma$, one must be careful to consider a path staying at a distance $O(1)$ from the bulk, that is why we only consider here multiple bulk distributions}:
\begin{align*}
  \forall t >0 : \ \ \mathbb P \left( \left\vert \frac{1}{\text{Rg}(\Pi)} \tr(\Pi Q^z) - \frac{1}{\text{Rg}(\Pi)} \tr \left( \Pi\tilde Q^{\tilde \Lambda^z} \right) \right\vert \geq t \right) \leq Ce^{-c\text{Rg}(\Pi)^2t^2},
  % \forall t >0 : \ \ \mathbb P \left( \left\vert \frac{1}{\text{dim}(E_k)} \tr(\Pi_{E_k}Q^z) - \frac{1}{\text{dim}(E_k)} \tr(\Pi_{E_k}\tilde Q^{\tilde \Lambda^z}) \right\vert \geq t \right) \leq Ce^{-c\text{dim}(E_k)^2t^2},
  % \frac{1}{\text{dim}(E_k)} \tr(\Pi_{E_k}Q^z) \in \frac{1}{\text{dim}(E_k)} \tr(\Pi_{E_k}\tilde Q^{\tilde \Lambda^z}) \pm \mathcal E_2 \left( \frac{1}{\sqrt {\text{dim}(E_k)}} \right) 
\end{align*}
for some constants $C,c>0$ and for any projector $\Pi$ defined on $\mathbb R^p$.

The approach we present here does not only allows us to set the concentration of $Q^z$, but also the concentration of any polynomial of finite degree taking as variable combination of $Q^z$, $X$ and $X^T$. The general idea is to develop the polynomial as an infinite series of powers of $X$ in a way that the observable diameters of the different terms of the series sum to the smallest value possible. As it is described in the proof of Proposition~\ref{pro:Concentration_lineaire_Q_proche_spectre}, the summation becomes slightly elaborate when $z$ gets close to the spectrum.

After presenting the definition and the basic properties of the convex and linear concentration (\textbf{Section 1}), we express the concentration of the sum of linearly concentrated random vectors (\textbf{Section 2}). Then we express the concentration of the entry wise product and the matricial product of convexly concentrated random vectors and matrices (\textbf{Section 3}). Finally we deduce the concentration of the resolvent and provide a computable deterministic equivalent (\textbf{Section 4}).

% \section{Concentration of Measure general inferences}
\section{Definition and first properties}

% Between Lipschitz concentration and linear concentration lies the convex concentration which arises from a theorem of Talagrand \citep{TAL95}, extending the list of examples of concentrated vectors. A random vector is said to be convexly concentrated if its Lipschitz and \textit{quasi-convex} observations are concentrated.
The concentration inequality \eqref{eq:concentration_convexe} is actually also valid for quasi-convex functionals defined folowingly.
\begin{definition}\label{def:quasi_convexe}
  Given a normed vector space $(E,\|\cdot \|)$, an application $f : E \mapsto \mathbb R$ is said to be quasi-convex iif for any $t \in \mathbb R$, the set $\{f \leq t\} \equiv \{x \in E \ | \ f(x) \leq t\}$ is convex.
\end{definition}
The theory of concentration of measure becomes relevant only when dimensions get big. In the cases under study in this paper, the dimension is either given by the number of entries, either by the number of columns $n$ of random matrices - the number of rows $p$ is then understood to depend on $n$, we will sometimes note $p = p_n $. 
We follow then the approach with Levy families \cite{Le51} whose aim is to track the concentration speed through dimensionality. Therefore, we do not talk about a static concentration of a vector but about the concentration of \emph{a sequence of random vectors} as seen in the definition below. In this paper, $E_n$ will either be $\mathbb R^{n}$, $\mathbb R^{p_n}$ $\mathcal{M}_{n}$, $\mathcal{M}_{p_n}$ or $\mathcal{M}_{p_n,n}$. 

There will generally be three possibilities for the norms defining the Lipschitz character of the concentrated observations. Talagrand Theorem gives the concentration for the euclidean norm - i.e. the Frobenius norm for matrices - but we will see that some concentrations are expressed with the nuclear norm (the dual norm of the spectral norm). Given two integers $l,m \in \mathbb N$, the euclidean norm on $\mathbb R^l$ is noted $\|\cdot\|$, the spectral, Frobenius and nuclear norm are respectively defined for any $M \in\mathcal M_{l,m}$ with the expressions:
\begin{align*}
  \|M\| = \sup_{x \in \mathbb R^m} \|Mx\|;&
  &\|M\|_F = \sqrt{\tr(MM^T)};&
  &\|M\|_* = \tr \left( \sqrt{MM^T}  \right).
\end{align*}
\begin{definition}\label{def:cnocentration_convexe}
  \sloppypar{Given a sequence of normed vector spaces $(E_n, \Vert \cdot \Vert_n)_{n\geq 0}$, a sequence of random vectors $(Z_n)_{n\geq 0} \in \prod_{n\geq 0} E_n$, a sequence of positive reals $(\sigma_n)_{n\geq 0} \in \mathbb R_+ ^{\mathbb N}$, we say that $Z =(Z_n)_{n\geq 1}$ is \emph{convexly  concentrated} with an \emph{observable diameter} of order $O(\sigma_n)$ iff there exist two positive constants $C,c>0$ such that $\forall n \in \mathbb N$ and for any $1$-Lipschitz and quasi-convex function $f : E_n \rightarrow \mathbb{R}$ (for the norms $\Vert \cdot \Vert_n$)\footnote{In this inequality, one could have replaced the term ``$\mathbb E[f(Z_n)]$'' by ``$f(Z_n')$'' (with $Z_n'$, an independent copy of $Z_n$) or by ``$m_{f}$'' (with $m_{f}$ a median of $f(Z_n)$). All those three definitions are equivalent.}, }
     \begin{align*}
     \forall t>0:&
     &\mathbb P \left(\left\vert f(Z_n) - \mathbb E[f(Z_n)]\right\vert\geq t\right) \leq C e^{-c(t/\sigma_n)^2},
     \end{align*}
  % \begin{itemize}
  %    \item $\exists c_n \leq O(\sigma_p), \exists C \leq O(1) \ | \ \forall p \in \mathbb N, \forall t>0:$ 
  %    \begin{center}
  %     $\mathbb P \left(\left\vert f(Z_p) - f(Z_p')\right\vert\geq t\right) \leq C e^{(t/c_p)^2}$
  %    \end{center}
  %    \item $\exists c_p \leq O(\sigma_p), \exists C \leq O(1) \ | \ \forall p \in \mathbb N, \forall t>0:$ 
  %    \begin{center}
  %     $\mathbb P \left(\left\vert f(Z_p) - m_{f}\right\vert\geq t\right) \leq C e^{(t/c_p)^2}$
  %    \end{center}
  %    \item $\exists c_p \leq O(\sigma_p), \exists C \leq O(1) \ | \ \forall p \in \mathbb N, \forall t>0:$ 
  %    \begin{center}
  %     $\mathbb P \left(\left\vert f(Z_p) - \mathbb E[f(Z_p)]\right\vert\geq t\right) \leq C e^{(t/c_p)^2}$ %\footnote{The existence of the expectation of $f(Z_p)$ is guaranteed assuming any of the two other assertions \cite[Proposition 1.7]{LED05}}
  %    \end{center}
  %  \end{itemize} 
   % where $Z_p'$ is an independent copy of $Z_p$ and $m_{f}$ is a median of $f(Z_p)$ (it satisfies $\mathbb P \left(f(Z_p) \geq m_{f}\right), \mathbb P \left(f(Z_p) \leq m_{f}\right) \geq \frac{1}{2}$). 
   We write in that case\footnote{The index $2$ in ``$\mathcal E_2$'' is here a reference to the power of $t$ in the concentration bound $C e^{-c(t/\sigma_n)^2}$, we will see some example where this exponent is $1$, in particular in the Hanson-Wright Theorem~\ref{the:hanson_wright} where we will let appear a notation ``$\mathcal E_1$''.} $Z_n \propto_c \mathcal E_{2}(\sigma_n)$ (or more simply $Z \propto_c \mathcal E_{2}(\sigma)$).
\end{definition}
The Theorem of Talagrand then writes:
% The relevance of this definition is given by the next theorem. It is a combinatorial result which provides concentration inequalities for ``discontinuous'' distributions.% (we can have atoms for instance: none of the previous theorems in Subsection~\ref{sse:lipschitz_concentration} allowed us to handle this scenario).
\begin{theorem}[\cite{TAL95}]\label{the:talagrand}
  A (sequence of) random vector $Z \in [0,1]^n$ with independent entries satisfies $Z \propto_c \mathcal E_2$.
\end{theorem}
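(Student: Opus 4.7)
The plan is to invoke Talagrand's convex distance inequality from \cite{TAL95} and derive the stated concentration in a few lines. Talagrand's result, which I treat as a black box, asserts that for a product probability measure $P$ on $[0,1]^n$ and any convex measurable set $A\subseteq [0,1]^n$ with $P(A) > 0$,
\begin{align*}
\int_{[0,1]^n} \exp\!\left(\frac{d(x,A)^2}{4}\right) dP(x) \;\leq\; \frac{1}{P(A)},
\end{align*}
where $d(x,A) = \inf_{y \in A}\|x-y\|$ is the Euclidean distance to $A$. This deep isoperimetric-type inequality encodes the subgaussian behaviour specific to convex bodies inside the cube, and its proof (an induction on dimension via a clever optimization estimate) is the content of \cite{TAL95}.

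From there, the upper tail follows immediately. For a $1$-Lipschitz quasi-convex $f : [0,1]^n \to \mathbb{R}$ and a median $m$ of $f(Z)$, I set $A_m = \{f \leq m\}$: by quasi-convexity $A_m$ is convex, and $P(A_m) \geq 1/2$ by the definition of the median. The $1$-Lipschitz condition forces $d(x,A_m)\geq f(x)-m$ on $\{f \geq m\}$, so Markov's inequality combined with Talagrand's bound yields
\begin{align*}
P\!\left(f(Z) \geq m+t\right) \;\leq\; P\!\left(d(Z,A_m) \geq t\right) \;\leq\; e^{-t^2/4}\, \mathbb{E}\!\left[e^{d(Z,A_m)^2/4}\right] \;\leq\; 2\,e^{-t^2/4}.
\end{align*}

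The lower tail is the real subtlety, since super-level sets of a quasi-convex function need not be convex and the same set cannot be reused. I would handle it by a linearization argument: for a convex $f$, at a point $x_0$ that achieves the median a subgradient bound $f(Z) - m \geq g\cdot(Z-x_0)$ with $\|g\|\leq 1$ embeds $\{f(Z) \leq m-t\}$ into a large-deviation event of an affine functional of $Z$, which is controlled by Hoeffding's inequality; for merely quasi-convex $f$ one replaces this step by an auxiliary convex observable such as $\|Z-x_0\|$ and appeals again to the upper-tail argument already established. Finally, both tails integrate to $|\mathbb{E}[f(Z)]-m| = O(1)$, so the median-centered concentration transfers to a mean-centered one with adjusted constants $C, c$, which is exactly the $\mathcal{E}_2$ property of Definition~\ref{def:cnocentration_convexe}. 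The main obstacle is really Talagrand's convex distance inequality itself; everything else is bookkeeping built around it.
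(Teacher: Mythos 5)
The paper does not prove this statement at all: it is imported verbatim from \cite{TAL95}, so any complete derivation from Talagrand's convex distance inequality would be acceptable. Your black-box inequality and your upper-tail argument (take $A_m=\{f\le m\}$, convex by quasi-convexity, $P(A_m)\ge 1/2$, and use $f(x)\ge m+t \Rightarrow d(x,A_m)\ge t$ plus Markov) are correct and standard.

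The genuine gap is the lower tail. Your linearization step does not work: from a subgradient $g$ at a median point $x_0$ you only get the inclusion $\{f(Z)\le m-t\}\subseteq\{g\cdot(Z-x_0)\le -t\}$, and Hoeffding controls deviations of $g\cdot Z$ from its \emph{mean}, not from $g\cdot x_0$. The median property only yields $g\cdot x_0\ge \mathrm{med}(g\cdot Z)$, i.e.\ a bound in the wrong direction; $g\cdot x_0$ may exceed $\mathbb E[g\cdot Z]$ by order $\sqrt n$. Concretely, take $f(z)=\|z-a\|$ and choose $x_0$ on the median sphere with $x_0-a$ orthogonal to $\mathbb E[Z]-a$: then $g\cdot x_0 - \mathbb E[g\cdot Z]\approx m$, and the event $\{g\cdot(Z-x_0)\le -t\}$ has probability close to $1$ for all $t\ll m$, so the embedding gives nothing. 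The fallback "auxiliary convex observable $\|Z-x_0\|$" for quasi-convex $f$ is likewise not an argument. The standard (and simple) repair needs no super-level sets and no subgradients: apply the same convex distance inequality to the convex set $A=\{f\le m-t\}$; since $f$ is $1$-Lipschitz, every $x$ with $f(x)\ge m$ satisfies $d(x,A)\ge t$, whence
\begin{align*}
\tfrac12\, e^{t^2/4}\;\le\; \mathbb P\bigl(f(Z)\ge m\bigr)\,e^{t^2/4}\;\le\;\mathbb E\Bigl[e^{d(Z,A)^2/4}\Bigr]\;\le\;\frac{1}{\mathbb P\bigl(f(Z)\le m-t\bigr)},
\end{align*}
so $\mathbb P(f(Z)\le m-t)\le 2e^{-t^2/4}$. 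With both tails in hand, your median-to-mean conversion is fine and matches the footnote of Definition~\ref{def:cnocentration_convexe}.
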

% The class of convexly concentrated random vectors is far less stable than the class of Lipschitz concentrated vectors: t
Convex concentration is preserved through affine transformations (as for the class of linearly concentrated vectors). Given two vector spaces, $E$ and $F$, we note $\mathcal A(E,F)$ the set of affine transformation from $E$ to $F$, and given $\phi\in \mathcal A(E,F)$, we decompose $\phi = \mathcal L(\phi) + \phi(0)$, where $\mathcal L(\phi)$ is the linear part of $\phi$ and $\phi(0)$ is the translation part. When $E=F$, $\mathcal A(E,F)$ is simply noted $\mathcal A(E)$.
\begin{proposition}\label{pro:stabilite_concentration_convexe_affine}
  Given two normed vector spaces $(E,\|\cdot\|)$ and $(F,\|\cdot\|')$, a random vector $Z \in E$ and an affine mapping $\phi \in \mathcal A(E,F)$ such that $\|\mathcal L(\phi)\|\leq \lambda$:
  \begin{align*}
    Z \propto_c \mathcal E_2(\sigma)&
    &\Longrightarrow&
    &\phi(Z) \propto_c \mathcal E_2(\lambda\sigma).
  \end{align*}
\end{proposition}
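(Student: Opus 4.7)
The plan is to pull back any test function on $F$ to a test function on $E$ via precomposition with $\phi$, verify that this preserves both the Lipschitz and quasi-convex properties (with the appropriate constant), and then apply the convex concentration hypothesis on $Z$.

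More concretely, fix a $1$-Lipschitz quasi-convex mapping $g : F \to \mathbb R$ and set $f = g \circ \phi : E \to \mathbb R$. First I would check Lipschitz continuity: for any $x,y \in E$, using that $\phi(x) - \phi(y) = \mathcal L(\phi)(x-y)$,
\begin{align*}
  |f(x) - f(y)| \leq \|\phi(x) - \phi(y)\|' = \|\mathcal L(\phi)(x-y)\|' \leq \lambda \|x-y\|,
\end{align*}
so $f$ is $\lambda$-Lipschitz. Then I would check quasi-convexity: for any $t \in \mathbb R$,
\begin{align*}
  \{f \leq t\} = \phi^{-1}\!\left( \{g \leq t\} \right),
\end{align*}
and since $\{g \leq t\}$ is convex by quasi-convexity of $g$ and the preimage of a convex set under an affine map is convex, the sublevel set $\{f \leq t\}$ is convex. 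Hence $f$ is quasi-convex.

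Finally, the function $f/\lambda$ is $1$-Lipschitz and quasi-convex, so by the hypothesis $Z \propto_c \mathcal E_2(\sigma)$ applied to $f/\lambda$, there exist constants $C,c>0$ (independent of $n$) such that for every $t > 0$,
\begin{align*}
  \mathbb P \!\left( |g(\phi(Z)) - \mathbb E[g(\phi(Z))]| \geq t \right)
  = \mathbb P \!\left( \left| \tfrac{f(Z)}{\lambda} - \mathbb E\!\left[ \tfrac{f(Z)}{\lambda} \right] \right| \geq \tfrac{t}{\lambda} \right)
  \leq C e^{-c (t/(\lambda \sigma))^2},
\end{align*}
which is exactly the statement $\phi(Z) \propto_c \mathcal E_2(\lambda \sigma)$. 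There is no real obstacle here: the only nontrivial point is the stability of quasi-convexity under precomposition with an affine map, and this follows from the elementary fact that affine preimages of convex sets are convex; note that this argument would fail if $\phi$ were merely Lipschitz (nonlinear), which is precisely why the class of convexly concentrated vectors is not stable under general Lipschitz transformations.
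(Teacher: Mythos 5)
Your proof is correct: pulling back a $1$-Lipschitz quasi-convex test function $g$ on $F$ to $f = g\circ\phi$ on $E$, checking that $f$ is $\lambda$-Lipschitz and quasi-convex (affine preimages of convex sublevel sets are convex), and applying the hypothesis to $f/\lambda$ is exactly the standard argument; the paper states this proposition without proof, and your reasoning is precisely what it implicitly relies on. The closing remark about why the argument breaks for merely Lipschitz (nonlinear) maps is also the right point to emphasize.
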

% {\color{blue}
% As for Lipschitz concentration in Remark~\ref{rem:lipscitz_sous_ensemble} (and for linear concentration -- but this results was not displayed as it is not used in our study), a 

% Let us end with the definition of the linear concentration that is weaker than the convex concentration and still allows to control the norm.
We pursue our presentation with the introduction of the linear concentration. It is the ``minimal'' hypothesis necessary on a random vector $X$ to be able to bound quantities of the form $\mathbb E[\|X - \mathbb E[X]\|]$, as it has been explained in \cite{LOU19}. Here we will need its stability towards the sum when we will express $Q^z$ as an infinite series.
% We will see in next subsection that it is also very convenient to control sums  because it allows us to follow the concentration rate through concatenation of possibly dependent random vectors.
\begin{definition}[Linearly concentrated vectors]\label{def:linear_concentration}
  Given a sequence of normed vector spaces $(E_n, \Vert \cdot \Vert_n)_{n\geq 0}$, a sequence of random vectors $(Z_n)_{n\geq 0} \in \prod_{n\geq 0} E_n$, a sequence of deterministic vectors $(\tilde Z_n)_{n\geq 0} \in \prod_{n\geq 0} E_n$, a sequence of positive reals $(\sigma_n)_{n\geq 0} \in \mathbb R_+ ^{\mathbb N}$, $Z_n$ is said to be \emph{linearly concentrated} around the \emph{deterministic equivalent} 
  % {\color{red} ** peut-être mettre un peu plus d'emphase sur cette notion d'équivalent déterministe, non? **}{\color{orange} Comme ça c'est bon ou tu voulais aussi du texte en plus ?}
  $\tilde Z_n$ with an \emph{observable diameter} of order $O(\sigma_n)$ iff there exist two constants $c,C >0$ such that $\forall n \in \mathbb N$ and for any unit-normed linear form $f \in E_n'$ ($\forall n \in \mathbb N$, $\forall x \in E$: $|f(x)| \leq \|x\|_n$):
  \begin{align*}
    \forall t>0:& 
      &\mathbb P \left(\left\vert f(Z_n ) - f(\tilde Z_n)\right\vert\geq t\right) \leq C e^{-c(t/\sigma_n)^2}.
  \end{align*}
  When the property holds, we write $Z \in \tilde Z \pm \mathcal E_2(\sigma)$. 
  If it is unnecessary to mention the deterministic equivalent, we will simply write $Z \in \mathcal E_2(\sigma)$; and if we just need to control the order of the norm of the deterministic equivalent, we can write $Z \in O(\theta)\pm \mathcal E_2(\sigma)$ when $\|\tilde Z_n \|_n \leq O(\theta_n)$.
\end{definition}
In the literature \cite{bou13}, those vectors are commonly called sub-Gaussian random vectors. 
% We need this definition with generic $q$ to prove Proposition ~\ref{pro:concentration_lineaire_YAX} which involves a weaker than $\mathcal E_2$ tail decay.

The notions of linear concentration, convex concentration (and Lipschitz concentration) are equivalent for random variables and we have this important characterization with the moments:
\begin{proposition}[\cite{LED05}, Proposition 1.8., \cite{LOU19}, Lemma 1.22.]\label{pro:concentration_variable_caracterisation_moment}
  Given a sequence of random variables $Z_n \in \mathbb R$ and a sequence of positive parameters $\sigma_n>0$, we have the equivalence:
  \begin{align*}
    Z_n \propto_c \mathcal E_2(\sigma_n)
    &\Longleftrightarrow
    Z_n \in \mathbb E[Z_n] \pm \mathcal E_2(\sigma_n)\\
    &\Longleftrightarrow \exists C >0 \ | \ \forall n,m \in \mathbb N:
    \mathbb E \left[ \left\vert Z_n - \mathbb E[Z_n] \right\vert^n \right] \leq C m^{\frac{m}{2}}\sigma_n^m \\
    &\Longleftrightarrow \exists C >0 \ | \ \forall n \in \mathbb N, \forall r>0: \mathbb E \left[ \left\vert Z_n - \mathbb E[Z_n] \right\vert^r \right] \leq Cr^{\frac{r}{2}}\sigma_n^r .
  \end{align*}
\end{proposition}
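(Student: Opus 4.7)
The plan is to close the cyclic chain: convex concentration \(\Leftrightarrow\) linear concentration \(\Leftrightarrow\) real-exponent moment bound, together with the equivalence between integer- and real-exponent moments. First, for \(Z_n \propto_c \mathcal E_2(\sigma_n) \Leftrightarrow Z_n \in \mathbb E[Z_n] \pm \mathcal E_2(\sigma_n)\), the direct implication is immediate: on \(\mathbb R\) every \(1\)-Lipschitz linear form is automatically quasi-convex, its sublevel sets being half-lines, hence the convex concentration bound applies to it. For the converse, given a \(1\)-Lipschitz quasi-convex \(f : \mathbb R \to \mathbb R\), I decompose
\[
|f(Z_n) - \mathbb E f(Z_n)| \leq |f(Z_n) - f(\mathbb E Z_n)| + |f(\mathbb E Z_n) - \mathbb E f(Z_n)|.
\]
The \(1\)-Lipschitz hypothesis controls the first term by \(|Z_n - \mathbb E Z_n|\), which is sub-Gaussian by applying the linear concentration to the identity and its opposite; the second is bounded by \(\mathbb E|Z_n - \mathbb E Z_n| = O(\sigma_n)\) via Jensen and absorbed into the constants.

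Next, for the equivalence with the real-exponent moment bound, write \(Y = Z_n - \mathbb E Z_n\). The direction linear concentration \(\Rightarrow\) moment bound uses the layer-cake formula
\[
\mathbb E|Y|^r = r\int_0^\infty t^{r-1} \mathbb P(|Y| \geq t)\, dt \leq Cr\int_0^\infty t^{r-1} e^{-ct^2/\sigma_n^2}\, dt,
\]
and the substitution \(u = ct^2/\sigma_n^2\) recasts the right-hand side as \(\tfrac{C}{2}\, c^{-r/2}\sigma_n^r\, \Gamma(r/2)\); Stirling's bound \(\Gamma(r/2) \leq C'(r/2)^{r/2}\) delivers the claim. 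The converse is a Markov-and-optimize step: \(\mathbb P(|Y| \geq t) \leq t^{-r}\mathbb E|Y|^r \leq C(r^{1/2}\sigma_n/t)^r\), and the choice \(r = t^2/(e\sigma_n^2)\) (restricted to \(r\geq 2\)) yields the exponent \(\exp(-t^2/(2e\sigma_n^2))\); the small-\(t\) regime, where that optimal \(r\) is not admissible, is absorbed into the multiplicative constant by choosing \(C\) sufficiently large.

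For the equivalence between integer and real moments, real-to-integer is trivial. For integer-to-real, given \(r \in [m, m+1]\) with \(m \geq 1\), Jensen's inequality gives \(\mathbb E|Y|^r \leq (\mathbb E|Y|^{m+1})^{r/(m+1)}\); substituting the integer bound and using \(m+1 \leq 2r\) yields \(\mathbb E|Y|^r \leq O(1)\, r^{r/2}\sigma_n^r\). The small-\(r\) regime \(r \in (0,1)\) follows from \(\mathbb E|Y|^r \leq (\mathbb E|Y|)^r\) combined with the \(m = 1\) bound. The main technical delicacy across the proof is the Markov optimization step: one must verify that the admissibility restriction on \(r\) and the resulting constants combine to deliver exactly the form \(C e^{-c(t/\sigma_n)^2}\) with constants uniform in \(n\); the remaining arrows are routine bookkeeping with Stirling, Jensen, and the layer-cake identity.
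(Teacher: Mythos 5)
Your proof is correct and follows essentially the same standard route as the references the paper cites for this statement (Ledoux, Prop. 1.8 and Louart, Lemma 1.22 — the paper gives no proof of its own): equivalence of convex and linear concentration in dimension one via the Lipschitz bound around the mean, the layer-cake/Gamma-function computation for tail-to-moment, Markov plus optimization in $r$ for moment-to-tail, and Lyapunov/Jensen interpolation between integer and real exponents. The only looseness (the factor $2^{r/2}$ from the crude bound $m+1\leq 2r$, avoidable via $m+1\leq r+1$, and the $c^{-r/2}$ from the substitution) merely rescales $\sigma_n$ by a universal constant, which is immaterial here since the classes $\mathcal E_2(\sigma_n)$ and $\mathcal E_2(\lambda \sigma_n)$ coincide for fixed $\lambda>0$.
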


We end with a simple lemma that allows us to state that "every deterministic vector at a distance smaller than the observable diameter to a deterministic equivalent is also a deterministic equivalent".
\begin{lemma}[\cite{LOU19}, Lemma 2.6.]\label{lem:trou_noir_diametre_observable}
  Given a sequence of random vectors $Z_n \in E_n$ and two sequence of deterministic random vector $\tilde Z_n, \tilde Z'_n \in E_n$, if $\|\tilde Z_n - \tilde Z'_n \| \leq O(\sigma_n)$, then:
  \begin{align*}
    Z \in \tilde Z \pm \mathcal E_2(\sigma)&
    &\Longleftrightarrow&
    &Z \in \tilde Z' \pm \mathcal E_2(\sigma).
  \end{align*}
\end{lemma}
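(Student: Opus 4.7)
By symmetry it suffices to prove only the forward implication: assume $Z \in \tilde Z \pm \mathcal E_2(\sigma)$ and derive $Z \in \tilde Z' \pm \mathcal E_2(\sigma)$. The plan is to reduce to a triangle inequality for a single linear functional and then re-absorb the resulting additive $\sigma_n$-shift into the exponential tail by modifying the constants. Let $K>0$ be such that $\|\tilde Z_n - \tilde Z'_n\|_n \leq K \sigma_n$ for all $n$ (this is what the notation $\|\tilde Z_n - \tilde Z'_n\| \leq O(\sigma_n)$ means).

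Given a unit-norm linear form $f \in E_n'$, the $1$-Lipschitz property of $f$ yields
\begin{align*}
  |f(Z_n) - f(\tilde Z'_n)| \leq |f(Z_n) - f(\tilde Z_n)| + |f(\tilde Z_n - \tilde Z'_n)| \leq |f(Z_n) - f(\tilde Z_n)| + K \sigma_n.
\end{align*}
Consequently, for every $t > 0$,
\begin{align*}
  \mathbb P \left( |f(Z_n) - f(\tilde Z'_n)| \geq t \right) \leq \mathbb P \left( |f(Z_n) - f(\tilde Z_n)| \geq t - K \sigma_n \right).
\end{align*}

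The key step is to split the range of $t$. For $t \geq 2K\sigma_n$, we have $t - K\sigma_n \geq t/2$, so applying the hypothesis $Z \in \tilde Z \pm \mathcal E_2(\sigma)$ to the right-hand side gives a bound of the form $C e^{-c(t/2\sigma_n)^2} = C e^{-(c/4)(t/\sigma_n)^2}$. For the remaining range $0 < t < 2K\sigma_n$, the exponential $e^{-c'(t/\sigma_n)^2}$ is bounded below by $e^{-4c'K^2}$, so by enlarging the prefactor constant $C'$ to $\max(C, e^{4c'K^2})$ the bound $C' e^{-c'(t/\sigma_n)^2} \geq 1$ holds trivially, which dominates any probability. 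Combining the two ranges, one obtains constants $C', c' > 0$ (depending only on $C$, $c$, and $K$) such that for all $n$ and all $t>0$,
\begin{align*}
  \mathbb P \left( |f(Z_n) - f(\tilde Z'_n)| \geq t \right) \leq C' e^{-c'(t/\sigma_n)^2},
\end{align*}
uniformly over unit-norm linear forms $f$. This is exactly the definition of $Z \in \tilde Z' \pm \mathcal E_2(\sigma)$.

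No step is a genuine obstacle here: the entire argument is a triangle-inequality manipulation, and the only mild care needed is the constants-management trick to turn the shift $K\sigma_n$ into a renormalised Gaussian tail. The same proof works verbatim with $\mathcal E_1$ replacing $\mathcal E_2$, which is why this lemma is stated generically in \cite{LOU19}.
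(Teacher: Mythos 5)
Your argument is correct and is essentially the standard proof of this lemma (the paper does not reprove it but cites \cite{LOU19}, where the same triangle-inequality-plus-constant-absorption argument is used): bound $|f(Z_n)-f(\tilde Z'_n)|$ by $|f(Z_n)-f(\tilde Z_n)|+K\sigma_n$, use the tail bound at $t/2$ for $t\geq 2K\sigma_n$, and enlarge the prefactor so the bound is trivial for $t<2K\sigma_n$. The only cosmetic wrinkle is that you define $C'$ in terms of $c'$ before fixing $c'=c/4$, but this circularity is harmless and the constants work out.
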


\section{Linear concentration through sums and integrals}
% We saw in Subsection~\ref{sssec:concentration_norm} that 
% Linear concentration was the minimal necessary concentration property to be able to control the norm, it is also very convenient to control sums because it allows us to follow the concentration rate through concatenation of possibly dependent random vectors. 
Independence is known to be a key elements to most of concentration inequalities.
However, linear concentration behaves particularly well for the concatenation of random vectors whose dependence can not be disentangled.

The next proposition sets that the observable diameter for the $\ell^\infty$ norm remains unchanged through concatenation. Given a product $E \equiv \prod_{1\leq i\leq m} E_i$, where $(E_1,\|\cdot\|_\infty), \ldots, (E_m,\|\cdot\|_\infty)$ are $m$ normed vector spaces we define the $\ell^\infty$ norm on $E$ with the following identity:
\begin{align}\label{eq:definition_l_infty}
  (z_1,\ldots ,z_m) \in E: \ \Vert (z_1,\ldots , z_m) \Vert_{\ell^\infty}  = \sup_{1\leq i \leq m} \| z_i\|_i.
\end{align}
\begin{proposition}\label{pro:concentration_concatenation_vecteurs_lineairement_concentres}
    Given two sequences $m \in \mathbb N^{\mathbb N}$ and $\sigma \in \mathbb R_+^{\mathbb N}$, a constant $q$, $m$ sequences of normed vector spaces $(E_i, \|\cdot\|_i)_{1\leq i \leq m}$, $m$ sequences of deterministic vectors $\tilde Z_1\in E_1,\ldots, \tilde Z_m\in E_m$, and $m$ sequences of random vectors $Z_1 \in E_1,\ldots,Z_m \in E_m$ (possibly dependent) satisfying, for any $i\in\{1,\ldots ,m\}$, $Z_i \in \tilde Z_i \pm \mathcal E_2(\sigma)$, we have the concentration~:% the sum $Z_1\cdots Z_p$ is concentrated, it verifies~: 
  \begin{align*}
    (Z_1,\ldots,Z_m) \in (\tilde Z_1,\ldots, \tilde Z_m) \pm \mathcal E_2(\sigma),
     \ \ \text{in } (E, \Vert \cdot \Vert_{\ell^\infty}).
  \end{align*}
\end{proposition}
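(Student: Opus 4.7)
The plan is to reduce the claim to a one-dimensional moment estimate for a scalar random variable and then invoke Proposition~\ref{pro:concentration_variable_caracterisation_moment} to recover a concentration inequality. The starting observation is that the dual of $(E,\Vert\cdot\Vert_{\ell^\infty})$ is $\prod_i E_i'$ equipped with an $\ell^1$-type norm: any unit-normed linear form $f$ on $E$ decomposes as $f(z_1,\ldots,z_m) = \sum_i f_i(z_i)$ with $\alpha_i := \|f_i\|_i^{*} \geq 0$ and $S := \sum_i \alpha_i \leq 1$. Concentrating a general linear functional therefore reduces to controlling a sum of $m_n$ (possibly dependent) scalar terms whose observable diameters are $\alpha_i \sigma_n$.

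Fix such an $f$ and set $X_i = f_i(Z_i) - f_i(\tilde Z_i)$, so that $f(Z) - f(\tilde Z) = \sum_i X_i$. Applying the hypothesis $Z_i \in \tilde Z_i \pm \mathcal E_2(\sigma_n)$ to the unit-normed functional $f_i/\alpha_i$, each $X_i$ satisfies $\mathbb P(|X_i| \geq t) \leq C e^{-c(t/(\alpha_i \sigma_n))^2}$, and integrating $r\,t^{r-1}dt$ against this tail yields $\mathbb E[|X_i|^r] \leq K\, r^{r/2}(\alpha_i \sigma_n)^r$ for all $r \geq 1$, with a universal constant $K$.

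The crux of the argument is the following Jensen inequality with the reweighting $p_i = \alpha_i / S$:
\begin{align*}
\Big(\sum_i |X_i|\Big)^{r} = \Big(\sum_i p_i\, \tfrac{|X_i|}{p_i}\Big)^{r} \leq \sum_i p_i \Big(\tfrac{|X_i|}{p_i}\Big)^{r} = \sum_i \frac{|X_i|^{r}}{p_i^{r-1}}.
\end{align*}
Taking expectations and substituting the moment estimate gives
\begin{align*}
\mathbb E\Big[\Big(\sum_i |X_i|\Big)^{r}\Big] \leq K\, r^{r/2} \sigma_n^{r} \sum_i \alpha_i\, S^{r-1} = K\, r^{r/2} \sigma_n^{r} S^{r} \leq K\, r^{r/2} \sigma_n^{r},
\end{align*}
uniformly in $m_n$. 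Since $|f(Z) - f(\tilde Z)|^{r} \leq (\sum_i |X_i|)^{r}$, Proposition~\ref{pro:concentration_variable_caracterisation_moment} then delivers $f(Z) - f(\tilde Z) \propto_c \mathcal E_2(\sigma_n)$; the $r=1$ version of the bound also controls $|\mathbb E[f(Z)] - f(\tilde Z)|$ by $O(\sigma_n)$, so Lemma~\ref{lem:trou_noir_diametre_observable} turns concentration around the mean into concentration around $f(\tilde Z)$, which is exactly the announced inequality as $f$ ranges over the unit ball of $(E,\Vert\cdot\Vert_{\ell^\infty})'$.

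The delicate point is the Jensen step. A naive tail-level union bound $\mathbb P(|\sum_i X_i| \geq t) \leq \sum_i \mathbb P(|X_i| \geq \alpha_i t)$ would produce a spurious factor $m_n$, degrading the observable diameter to $O(\sigma_n \sqrt{\log m_n})$; passing through the moments and reweighting by $p_i = \alpha_i/S$ exactly cancels this factor, and this cancellation is possible precisely because the dual of $\ell^\infty$ is $\ell^1$. Notice that no independence between the $Z_i$ is used, consistent with the statement.
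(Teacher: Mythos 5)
Your proof is correct and follows essentially the same route as the paper: you decompose the unit-normed functional along the $\ell^1$-dual structure of $\Vert\cdot\Vert_{\ell^\infty}$, apply Jensen's inequality with weights proportional to the dual norms of the components (your reweighting by $p_i=\alpha_i/S$ is algebraically the paper's use of the convexity of $t\mapsto t^l$ with weights $n_i/\sum n_i$), and conclude via the moment characterization of Proposition~\ref{pro:concentration_variable_caracterisation_moment}. Your explicit re-centering step via Lemma~\ref{lem:trou_noir_diametre_observable} is a harmless (indeed slightly more careful) elaboration of what the paper leaves implicit.
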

In other word, the linear observable diameter of $(Z_1,\ldots, Z_m)$ can not be bigger than the observable diameter of $(Z, \ldots,Z)$, where $Z$ is chosen as the worse possible random vector satisfying the hypotheses of $Z_1,\ldots, Z_m$.
\begin{remark}\label{rem:concatenation_impossible_concentration_convexe_ou_lipschitz}
  Example 2.27. in \cite{LOU19} shows that this stability towards concatenation is not true for Lipschitz and convex concentration.
\end{remark}
% \sloppypar{The basic idea behind the proof is that the linear observable diameter of $(Z_1,\ldots, Z_m)$ can not be bigger than the observable diameter of $(Z, \ldots,Z)$, where $Z$ is chosen as the worse possible random vector satisfying the hypotheses of $Z_1,\ldots, Z_m$. This heuristic is formalized below with statistical domination.}
  \begin{proof}
    Let us consider a linear function $u: E \rightarrow \mathbb{R}$, such that $$\left\Vert u\right\Vert_{\infty} \equiv \sup_{\left\Vert z\right\Vert_{\infty}\leq 1} \vert u(z)\vert \leq 1.$$
    %For $z\in E$, let us note $z^{(i)}=(0,\ldots,0,z_i,0,\ldots,0)\in E$, the vector full of $0$ with $z_i$ at the $i^{\text{th}}$ entry
    Given $i\in [m]$, let us note $u_i : E_i \rightarrow \mathbb R$ the function defined as $u_i(z) = u((0,\ldots,0,z,0,\ldots,0))$ (where $z$ is in the $i^{\text{th}}$ entry). For any $z\in E$, one can write~:
    \begin{align*}
      u(z) = \sum _{i=1}^m n_i u'_i(z_i), 
    \end{align*}
    where $n_i \equiv \|u_i\| = \sup_{\|z \|_i\leq 1} u_i(z)$ and $u'_i = u_i/n_i$ ($\| u'_i\| =1$). We have the inequality~:
    $$\sum _{i=1}^m n_i = \sum_{i=1}^m n_i\sup_{\|z_i\|_i \leq 1} u_i'(z_i) = \sup_{\Vert z \Vert_\infty \leq 1} u(z) \leq 1.$$
    With this bound at hand, we plan to employ the characterization with the centered moments. Let us conclude thanks to Proposition~\ref{pro:concentration_variable_caracterisation_moment} and the convexity of $t \mapsto t^l$, for any $l \geq 1$:
    % Considering $l\in \mathbb N_*$, we introduce the polynomial $P(X_1,\ldots, X_m) = (n_1X_1 + \cdots + n_mX_m)^l$: % = \sum_{k_1+\cdots+k_m=m} \binom{m}{k_1,\ldots,k_m} X_1^{k_1}\cdots X_m^{k_m}$
    % and $P_n(X_1,\ldots, X_m) = \sum_{i=1}^p n_i X_i$ ; they both 
    % it has positive coefficients which allows us to employ Lemma~\ref{lem:borne_moment_polynome_de_variables_dependantes} in order to bound~:%Let us note $Z=(Z_1,\ldots, Z_p)$ and $\tilde Z=(\tilde Z_1,\ldots, \tilde Z_p)$. Given $i\in\{0, \ldots, m\}$, we note $Z^{(i)} = (Z_1,\ldots Z_i, \tilde Z_{i+1},\ldots, \tilde Z_m)$ (with this notation : $Z^{(0)}=Z$ and $Z^{(m)}=\tilde Z$). For any $r\geq \max(q,1)$, let us exploit the convexity of $t \mapsto t^r$ to bound~:
  \begin{align*}
    \mathbb{E}\left[\left\vert u(Z)-u(\tilde Z)\right\vert^l\right]
    &\leq \mathbb{E}\left[ \left( \sum_{i=1}^{m} n_i \left\vert u_i'\left(Z_i\right)-u'_i\left(\tilde Z_i\right)\right\vert \right)^l \right] \\
    &\leq \left( \sum_{i=1}^{m} n_i \right)^l \mathbb{E}\left[ \sum_{i=1}^{m} \frac{n_i}{ \sum_{i=1}^{m} n_i} \left\vert u_i'\left(Z_i\right)-u'_i\left(\tilde Z_i\right)\right\vert^l \right] \\
    % &= \mathbb{E} \left[P \left(\left\vert u_1'\left(Z_1\right)-u'_1\left(\tilde Z_1\right)\right\vert,\ldots,\left\vert u_m'\left(Z_m\right)-u'_m\left(\tilde Z_m\right)\right\vert\right) \right]\\
    % &\leq \mathbb{E} \left[P \left(Y,\ldots,Y\right)  \right] 
    % = \mathbb E \left[\left(\sum_{i=1}^m n_i Y\right)^l\right] \\
    % &\leq  \mathbb E \left[Y^l\right] 
    &\leq \sup_{l\in[m]} \mathbb{E}\left[ \left\vert u_i'\left(Z_i\right)-u'_i\left(\tilde Z_i\right)\right\vert^l \right]\ \ \leq \ C l^{\frac{l}{2}} \sigma^l.
  \end{align*}
  % where $Y\geq 0$ is a nonnegative random variable having a cumulative distribution function equal to $1-\min(1,Ce^{-(\cdot/\sigma)^2})$ (such a variable dominates $\vert u_i'(Z_i)-u'_i(\tilde Z_i)\vert$, for all $i \in [m]$).
  % We can then conclude thanks to Proposition~\ref{pro:concentration_variable_caracterisation_moment}.
\end{proof}

If we want to consider the concatenation of vectors with different observable diameter, it is more convenient to look at the concentration in a space $(\prod_{i=1}^m E_i, \ell^r)$, for any given $r>0$, where, for any $(z_1,\ldots, z_m) \in \prod_{i=1}^m E_i$:
\begin{align*}
  \left\Vert (z_1,\ldots, z_m)\right\Vert_{\ell^r} = \left(\sum_{i=1}^m \|z_i\|_i^r\right)^{1/r}.
\end{align*}
\begin{corollaire}\label{cor:concentration_concatenation_vecteur_lineaireent_concentre}
   Given two constants $q,r>0$, $m \in \mathbb N^{\mathbb N}$, $\sigma_1,\ldots,\sigma_m \in (\mathbb R_+^{\mathbb N}) ^m$, $m$ sequences of $(E_i, \|\cdot\|_i)_{1\leq i \leq m}$, $m$ sequences of deterministic vectors $\tilde Z_1\in E_1,\ldots, \tilde Z_m\in E_m$, and $m$ sequences of random vectors $Z_1 \in E_1,\ldots,Z_m \in E_m$ (possibly dependent) satisfying, for any $i\in\{1,\ldots ,m\}$, $Z_i \in \tilde Z_i \pm \mathcal E_2(\sigma_i)$, we have the concentration~:% the sum $Z_1\cdots Z_m$ is concentrated, it verifies~: 
  \begin{align*}
    (Z_1,\ldots,Z_m) \in (\tilde Z_1,\ldots, \tilde Z_m) \pm \mathcal E_2(\|\sigma\|_r),
     \ \ \text{in } (E, \Vert \cdot \Vert_{\ell^r}),
  \end{align*}
\end{corollaire}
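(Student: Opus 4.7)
The plan is to adapt the proof of Proposition~\ref{pro:concentration_concatenation_vecteurs_lineairement_concentres}, replacing the $\ell^\infty/\ell^1$ duality with the $\ell^r/\ell^{r'}$ duality, where $r' = r/(r-1)$ is the Hölder conjugate of $r$ (with the convention $r'=\infty$ when $r=1$ and $r'=1$ when $r=\infty$). So let $u : E \to \mathbb{R}$ be linear with $\|u\|_{(\ell^r)'} \equiv \sup_{\|z\|_{\ell^r}\leq 1}|u(z)| \leq 1$ and decompose $u(z) = \sum_{i=1}^m u_i(z_i)$, with $u_i(z_i) \equiv u((0,\ldots,0,z_i,0,\ldots,0))$. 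Setting $n_i = \|u_i\|$ and $u_i' = u_i/n_i$ (for indices with $n_i>0$), the same supremum computation as in the $\ell^\infty$ case yields
\begin{align*}
  \|u\|_{(\ell^r)'}
  \;=\; \sup_{\|z\|_{\ell^r}\leq 1}\sum_{i=1}^m n_i u_i'(z_i)
  \;=\; \sup_{\sum_i t_i^r \leq 1}\sum_{i=1}^m n_i t_i
  \;=\; \|n\|_{r'},
\end{align*}
so the constraint on $u$ translates into $\|n\|_{r'} \leq 1$. This duality identity is the only genuinely new ingredient compared to Proposition~\ref{pro:concentration_concatenation_vecteurs_lineairement_concentres}.

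For the moment estimate, rather than the Jensen/convexity trick used in the $\ell^\infty$ case, I would use the triangle inequality in $L^l$: applying Proposition~\ref{pro:concentration_variable_caracterisation_moment} to each scalar random variable $u_i'(Z_i) - u_i'(\tilde Z_i)$ gives the uniform bound $\|u_i'(Z_i)-u_i'(\tilde Z_i)\|_{L^l} \leq C^{1/l}\sqrt{l}\,\sigma_i$, hence
\begin{align*}
  \|u(Z) - u(\tilde Z)\|_{L^l}
  \;\leq\; \sum_{i=1}^m n_i \|u_i'(Z_i) - u_i'(\tilde Z_i)\|_{L^l}
  \;\leq\; C^{1/l}\sqrt{l}\sum_{i=1}^m n_i \sigma_i.
\end{align*}
Hölder's inequality in $\ell^{r'}/\ell^r$ then gives $\sum_i n_i \sigma_i \leq \|n\|_{r'}\|\sigma\|_r \leq \|\sigma\|_r$, so that $\mathbb{E}\bigl[|u(Z)-u(\tilde Z)|^l\bigr] \leq C\,l^{l/2}\,\|\sigma\|_r^l$ with a constant independent of $u$. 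Invoking Proposition~\ref{pro:concentration_variable_caracterisation_moment} in the other direction produces the sub-Gaussian bound $\mathbb P(|u(Z)-u(\tilde Z)|\geq t)\leq C' e^{-c'(t/\|\sigma\|_r)^2}$, which is exactly the linear concentration $Z \in \tilde Z \pm \mathcal E_2(\|\sigma\|_r)$ in $(E,\|\cdot\|_{\ell^r})$.

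I do not foresee any substantive obstacle: once the duality identity $\|u\|_{(\ell^r)'}=\|n\|_{r'}$ is in place, the rest is formal. The only minor care points are the degenerate indices where $n_i=0$ (which are harmlessly removed from the sums defining $u_i'$) and the boundary values $r\in\{1,\infty\}$, both of which are covered by the same chain of inequalities with $r'\in\{\infty,1\}$.
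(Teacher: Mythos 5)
Your proof is correct, but it takes a different route from the paper's. The paper proves this corollary as a two-line reduction to Proposition~\ref{pro:concentration_concatenation_vecteurs_lineairement_concentres}: one first applies that proposition to the normalized vectors $(Z_1/\sigma_1,\ldots,Z_m/\sigma_m)$, which are concentrated in $(E,\|\cdot\|_{\ell^\infty})$ with observable diameter $O(1)$, and then pushes the result through the linear map $(z_1,\ldots,z_m)\mapsto(\sigma_1 z_1,\ldots,\sigma_m z_m)$, which is $\|\sigma\|_r$-Lipschitz from $\ell^\infty$ to $\ell^r$; the stability of linear concentration under Lipschitz linear maps then gives the claim. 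You instead re-run the proof of Proposition~\ref{pro:concentration_concatenation_vecteurs_lineairement_concentres} directly in the $\ell^r$ setting, replacing the $\ell^\infty/\ell^1$ bookkeeping by the duality identity $\|u\|_{(\ell^r)'}=\|n\|_{r'}$ and the Jensen/convexity step by Minkowski's inequality in $L^l$ followed by H\"older's inequality $\sum_i n_i\sigma_i\leq\|n\|_{r'}\|\sigma\|_r$. Both arguments are sound; yours is self-contained and the Minkowski step is arguably cleaner than the convexity trick (it would even simplify the paper's own proof of Proposition~\ref{pro:concentration_concatenation_vecteurs_lineairement_concentres}), while the paper's rescaling argument is shorter and does not invoke duality at all, so it survives verbatim for $0<r<1$ where $\ell^r$ is only a quasi-norm and your identity $\|u\|_{(\ell^r)'}=\|n\|_{r'}$ (with $r'=r/(r-1)$) breaks down --- a harmless restriction in practice since the statement is only meaningful for genuine norms, i.e.\ $r\geq1$. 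One small point of rigor, shared with the paper's own proof of Proposition~\ref{pro:concentration_concatenation_vecteurs_lineairement_concentres}: Proposition~\ref{pro:concentration_variable_caracterisation_moment} is stated for moments centered at $\mathbb E[u_i'(Z_i)]$, whereas you (and the paper) use it with the pivot $u_i'(\tilde Z_i)$; this is fine because the definition of $Z_i\in\tilde Z_i\pm\mathcal E_2(\sigma_i)$ gives the Gaussian tail around $u_i'(\tilde Z_i)$ directly, and integrating that tail yields the same moment bound, but it is worth saying explicitly.
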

\begin{remark}\label{rem:concentration_concatenation_variable_lineairement_concentre}
  % For instance, when $r= 2$
  % Of course the classical use is to take $r=2$ and 
  When $E_1=\cdots=E_m = E$ in the setting of Corollary~\ref{cor:concentration_concatenation_vecteur_lineaireent_concentre}, then for any vector $a=(a_1,\ldots,a_m)\in \mathbb R_+^m$, we know  that:
  \begin{align*}
    \sum_{i=1}^m a_i Z_i \in \sum_{i=1}^m a_i \tilde Z_i \pm \mathcal E_2(|a|^T\sigma),
    % \sum_{i=1}^m a_i Z_i \in \sum_{i=1}^m a_i \tilde Z_i \pm \mathcal E_2(\|a\|_1\|\sigma\|_\infty),
  \end{align*}
  where $|a|=(|a_1|,\ldots,|a_m|)\in \mathbb R_+^m$
  % now taking $r=2$, we have:
  % \begin{align*}
  %   \sum_{i=1}^m a_i Z_i \in \sum_{i=1}^m a_i \tilde Z_i \pm \mathcal E_2(\|a\|\|\sigma\|),
  % \end{align*}
  % and taking $r=1$:
  % \begin{align*}
  %   \sum_{i=1}^m a_i Z_i \in \sum_{i=1}^m a_i \tilde Z_i \pm \mathcal E_2(\|a\|_\infty\|\sigma\|_1).
  % \end{align*}
\end{remark}
\begin{proof}
  We already know from Proposition~\ref{pro:concentration_concatenation_vecteurs_lineairement_concentres} that:
  \begin{align*}
    \left(\frac{Z_1}{\sigma_1},\ldots,\frac{Z_m}{\sigma_m} \right) \in \left(\frac{\tilde Z_1}{\sigma_1},\ldots, \frac{\tilde Z_m}{\sigma_m}  \right) \pm \mathcal E_2,
     \ \ \text{in } (E, \Vert \cdot \Vert_{\ell^\infty}).
  \end{align*}
  Let us then consider the linear mapping:
  \begin{align*}
    \phi :
    \begin{aligned}[t]
      (E,\|\cdot \|_{\ell^\infty}) \ &
      &\longrightarrow&
      &(E,\|\cdot \|_{\ell^r})\hspace{0.6cm}\\
      (z_1,\ldots, z_m)&
      &\longmapsto&
      &(\sigma_1z_1,\ldots, \sigma_mz_m),
    \end{aligned}
  \end{align*}
  the Lipschitz character of $\phi$ is clearly $\|\sigma\|_r  = (\sum_{i=1}^m \sigma_i^r)^{1/r}$, and we can deduce the concentration of $Z = \phi(\sigma_1Z_1,\ldots, \sigma_m Z_m)$.  
\end{proof}
Corollary~\ref{cor:concentration_concatenation_vecteur_lineaireent_concentre} is very useful to set the concentration of infinite series of concentrated random variables. This is settled thanks to an elementary result of \cite{LOU19} that sets that the observable diameter of a limit of random vectors is equal to the limit of the observable vectors. Be careful that rigorously, there are two indexes, $n$ coming from Definition~\ref{def:linear_concentration} that only describes the concentration of sequences of random vectors, and $m$ particular to this lemma that will tend to infinity. For clarity, we do not mention the index $n$. %For simplicity, we provide this result for random variables
\begin{lemma}[\cite{LOU19}, Proposition 1.12.]\label{lem:passage_a_la_limite_dans_la_concetration}
  % Given a sequence of random vectors $(Z_{p,n})_{p,n \in \mathbb N} \in E_p^{\mathbb N^2}$ and a sequence of positive reals $(\sigma_{p,n})_{p,n \in \mathbb N} \in \mathbb R^{\mathbb N^2}_+$ such that:% for all $n,p \in \mathbb N$:
  % \begin{align*}
  %   Z_{p,n} \propto \mathcal E_2(\sigma_{p,n}),
  % \end{align*}
  % if we assume that $(Z_{p,n})_{p,n \in \mathbb N}$ converges in law
  % %There exists a sequence of random vectors $(Z_{p,\infty})_{p \in \mathbb N}$, such that f
  % \footnote{For any bounded sequence of continuous mapping $(f_p)_{p \geq 0} : \prod_{n\geq 0} E_p \to \mathbb R^{\mathbb N}$:
  % \begin{align*}
  %    \sup_{p \in \mathbb N} \left\vert \mathbb E[f_p(Z_{p,n}) - \mathbb E[f_p(Z_{p,\infty})]\right\vert \underset{n\to \infty}{\longrightarrow} 0
  %  \end{align*} },
  % %  \footnote{For any mapping $(f_p)_{p \geq 0} : \prod_{n\geq 0} E_p \to \mathbb R^{\mathbb N}$ supported on compact subset of $\prod_{n\geq 0} E_p$:
  % % \begin{align*}
  % %    \sup_{p \in \mathbb N} \left\vert \mathbb E[f_p(Z_{p,n}) - \mathbb E[f_p(Z_{p,\infty})]\right\vert \underset{n\to \infty}{\longrightarrow} 0
  % %  \end{align*} },
  %   when $n$ tends to infinity to a sequence of random vectors $(Z_{p,\infty})_{p\in \mathbb N} \in \prod_{p \geq 0} E_p$  and that $\sigma_{p,n} \underset{n\to \infty}{\longrightarrow} \sigma_{p,\infty} $ then:
  %  \begin{align*}
  %    Z_{p,\infty} \propto \mathcal E_2(\sigma_{p,\infty}).
  %  \end{align*}
  Given a sequence of random vectors $(Z_{m})_{m \in \mathbb N} \in E^{\mathbb N}$, a sequence of positive reals $(\sigma_{m})_{m \in \mathbb N} \in \mathbb R^{\mathbb N}_+$ and a sequence of deterministic vectors $(\tilde Z_{m})_{m \in \mathbb N} \in E^{\mathbb N}$ such that:% for all m$n,p \in \mathbb N$:
  \begin{align*}
    Z_{m} \in \tilde Z_m \pm  \mathcal E_2(\sigma_{m}),
  \end{align*}
  if we assume that $(Z_{m})_{m \in \mathbb N}$ converges in 
  %There exists a sequence of random vectors $(Z_{p,\infty})_{p \in \mathbb N}$, such that f
  law\footnote{For any $n \in \mathbb N$, for any bounded continuous mapping $ f : \prod_{m\geq 0} E_p \to \mathbb R^{\mathbb N}$:
  \begin{align*}
     \sup_{n \in \mathbb N} \left\vert \mathbb E[f(Z_{n, m}) - \mathbb E[f(Z_{n, \infty})]\right\vert \underset{m\to \infty}{\longrightarrow} 0
   \end{align*} }
  %  \footnote{For any mapping $(f_p)_{p \geq 0} : \prod_{n\geq 0} E_p \to \mathbb R^{\mathbb N}$ supported on compact subset of $\prod_{n\geq 0} E_p$:
  % \begin{align*}
  %    \sup_{p \in \mathbb N} \left\vert \mathbb E[f_p(Z_{p,n}) - \mathbb E[f_p(Z_{p,\infty})]\right\vert \underset{n\to \infty}{\longrightarrow} 0
  %  \end{align*} },
  when $m$ tends to infinity to a random vector $(Z_{\infty}) \in E$, that $\sigma_{m} \underset{n\to \infty}{\longrightarrow} \sigma_{\infty} $ and that $\tilde Z_{m} \underset{n\to \infty}{\longrightarrow} \tilde Z_{\infty}$, then:
  \begin{align*}
    Z_{\infty} \in \tilde Z_{\infty} \pm \mathcal E_2(\sigma_{\infty}).
  \end{align*}
  (The result also holds for Lipschitz and convex concentration)
  % if we only assume linear concentration for $Z_m$ then we obtain the linear concentration of $Z_\infty$.
\end{lemma}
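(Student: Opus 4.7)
The plan is to fix any unit-normed linear form $f \in E'$ and transfer the assumed inequality
\[
\mathbb P\bigl(|f(Z_m) - f(\tilde Z_m)| \ge t\bigr) \le C e^{-c(t/\sigma_m)^2}
\]
to the limiting object, exploiting the fact that the pair of constants $(c,C)$ produced by Definition~\ref{def:linear_concentration} is uniform in $m$.

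First I would fix $t > 0$ and $\varepsilon > 0$ and introduce a continuous cut-off $\phi_\varepsilon : \RR \to [0,1]$ with $\phi_\varepsilon(s) = 0$ whenever $|s - f(\tilde Z_\infty)| \le t$ and $\phi_\varepsilon(s) = 1$ whenever $|s - f(\tilde Z_\infty)| \ge t + \varepsilon$. Since $f$ is continuous (being $1$-Lipschitz), the composition $\phi_\varepsilon \circ f : E \to [0,1]$ is bounded and continuous, so the assumed convergence in law of $Z_m$ to $Z_\infty$ gives $\mathbb E[\phi_\varepsilon(f(Z_m))] \to \mathbb E[\phi_\varepsilon(f(Z_\infty))]$. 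In parallel, for each $m$ I would bound
\[
\mathbb E[\phi_\varepsilon(f(Z_m))] \le \mathbb P\bigl(|f(Z_m) - f(\tilde Z_\infty)| \ge t\bigr) \le \mathbb P\bigl(|f(Z_m) - f(\tilde Z_m)| \ge t - \delta_m\bigr) \le C e^{-c((t - \delta_m)/\sigma_m)^2},
\]
where $\delta_m := \|\tilde Z_m - \tilde Z_\infty\| \to 0$, using the triangle inequality together with $|f(\tilde Z_m) - f(\tilde Z_\infty)| \le \delta_m$ (as $f$ has norm at most $1$). Passing $m \to \infty$ and combining with the elementary lower bound $\mathbb P(|f(Z_\infty) - f(\tilde Z_\infty)| \ge t + \varepsilon) \le \mathbb E[\phi_\varepsilon(f(Z_\infty))]$ then produces
\[
\mathbb P\bigl(|f(Z_\infty) - f(\tilde Z_\infty)| \ge t + \varepsilon\bigr) \le C e^{-c(t/\sigma_\infty)^2}.
\]
Applying this with $t$ replaced by $t - \varepsilon$ and letting $\varepsilon \to 0$, the continuity of the exponential in $\varepsilon$ yields the desired bound $\mathbb P(|f(Z_\infty) - f(\tilde Z_\infty)| \ge t) \le C e^{-c(t/\sigma_\infty)^2}$.

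The argument is essentially routine once the cut-off $\phi_\varepsilon$ is in place; the main subtleties will be keeping the constants $(c, C)$ uniform in $m$ (which is built into the formulation of the concentration hypothesis) and handling the boundary case $\sigma_\infty = 0$, where the right-hand side degenerates to $0$ and forces $f(Z_\infty) = f(\tilde Z_\infty)$ almost surely, consistent with $Z_\infty$ being deterministic. The same scheme should apply verbatim to Lipschitz and convex concentration, merely replacing $f$ by a $1$-Lipschitz (respectively $1$-Lipschitz and quasi-convex) real-valued functional: the composition $\phi_\varepsilon \circ f$ remains continuous and bounded, which is all that the argument uses.
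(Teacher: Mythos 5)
The paper does not prove this lemma at all: it is imported verbatim from \cite{LOU19} (Proposition 1.12), so there is no internal proof to compare against. Judged on its own, your argument for the stated (linear) case is correct and is the standard portmanteau-type proof: the cut-off $\phi_\varepsilon\circ f$ is bounded and continuous, convergence in law transfers the uniform-in-$m$ tail bound, the shift $\delta_m=\|\tilde Z_m-\tilde Z_\infty\|\to 0$ handles the moving centering, and the final $\varepsilon\to 0$ and the degenerate case $\sigma_\infty=0$ are treated properly. Your reading that the constants $(c,C)$ must be uniform in $m$ is indeed the intended (and necessary) interpretation of the hypothesis, and it is good that you made it explicit.

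One caveat: the closing claim that the scheme applies \emph{verbatim} to Lipschitz and convex concentration is slightly too quick. In those settings the centering is $\mathbb E[f(Z_m)]$ (or a median of $f(Z_m)$), not $f(\tilde Z_m)$ for a deterministic $\tilde Z_m$, and since $f$ is Lipschitz but unbounded, convergence in law does not by itself give $\mathbb E[f(Z_m)]\to\mathbb E[f(Z_\infty)]$, so the step $|f(\tilde Z_m)-f(\tilde Z_\infty)|\le\delta_m$ has no direct analogue. The fix is short but not automatic: the uniform sub-Gaussian bound on $f(Z_m)-\mathbb E[f(Z_m)]$ gives uniform integrability, which combined with $f(Z_m)\Rightarrow f(Z_\infty)$ forces $\mathbb E[f(Z_m)]\to\mathbb E[f(Z_\infty)]$ (alternatively, work with medians, which converge under the weak convergence plus the uniform tail bound). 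With that supplement your proof covers the parenthetical cases as well.
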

\begin{corollaire}\label{cor:concentration_serie_vecteur_lineairement_concentres}
  \sloppypar{Given two constants $q,r>0$, $\sigma_1,\ldots,\sigma_m \ldots \in (\mathbb R_+^{\mathbb N}) ^\mathbb N$, a (sequences of) normed vector spaces $(E, \|\cdot\|)$, $\tilde Z_1 \ldots, \tilde Z_m,\ldots \in E^{\mathbb N}$ deterministic, and $ Z_1 \ldots,  Z_m,\ldots \in E^{\mathbb N}$ random (possibly dependent) satisfying, for any $n\in \mathbb N$, $Z_m \in \tilde Z_m \pm \mathcal E_2(\sigma_m)$. If we assume that $Z \equiv\sum_{n \in \mathbb N} Z_m$ is pointwise convergent\footnote{For any $w \in \Omega$, $\sum_{m \in \mathbb N} \|Z_m(w)\| \leq \infty$ and we define $Z(w) \equiv \sum_{m \in \mathbb N}Z_m(w)$}, that $\sum_{m \in \mathbb N} \tilde Z_m$ is well defined and that $\sum_{n\in \mathbb N} \sigma_i \leq \infty$, then we have the concentration~:% the sum $Z_1\cdots Z_p$ is concentrated, it verifies~: 
  \begin{align*}
    \sum_{m \in \mathbb N} Z_m \in \sum_{m \in \mathbb N} \tilde Z_m \pm \mathcal E_2 \left(\sum_{m \in \mathbb N} \sigma_m\right),
     \ \ \text{in } (E, \Vert \cdot \Vert),
  \end{align*}}
\end{corollaire}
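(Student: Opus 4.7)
The plan is to reduce the infinite-series statement to the finite-concatenation case already handled in Remark~\ref{rem:concentration_concatenation_variable_lineairement_concentre}, and then to pass to the limit using Lemma~\ref{lem:passage_a_la_limite_dans_la_concetration}.

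First I would fix $M \in \mathbb N$ and apply Remark~\ref{rem:concentration_concatenation_variable_lineairement_concentre} with $a_1 = \cdots = a_M = 1$ to the vectors $Z_1,\ldots,Z_M$ (all lying in the same space $E$). This immediately yields, for the partial sums
\begin{align*}
  S_M \equiv \sum_{m=1}^M Z_m \in \sum_{m=1}^M \tilde Z_m \pm \mathcal E_2\!\left(\sum_{m=1}^M \sigma_m\right), \qquad \text{in } (E,\|\cdot\|).
\end{align*}
This handles the ``finite'' content of the corollary and only uses what has already been proved.

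Next, I would verify the three hypotheses of Lemma~\ref{lem:passage_a_la_limite_dans_la_concetration} along the sequence $(S_M)_{M \in \mathbb N}$. The deterministic equivalents $\tilde S_M = \sum_{m=1}^M \tilde Z_m$ converge in $E$ to $\sum_{m\in\mathbb N}\tilde Z_m$ by the assumption that this series is well-defined. The observable-diameter parameters $\sum_{m=1}^M \sigma_m$ converge to the finite limit $\sum_{m\in\mathbb N}\sigma_m$ by hypothesis. Finally, the pointwise (i.e.\ almost sure) convergence $S_M \to Z = \sum_{m \in \mathbb N} Z_m$ implies convergence in law, which is exactly the mode of convergence required by the lemma. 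Applying the lemma then delivers
\begin{align*}
  \sum_{m\in\mathbb N} Z_m \in \sum_{m\in\mathbb N} \tilde Z_m \pm \mathcal E_2\!\left(\sum_{m\in\mathbb N} \sigma_m\right),
\end{align*}
which is the desired conclusion.

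The only subtle point is the justification that pointwise convergence of $S_M$ really suffices to feed the limit lemma, whose hypothesis is phrased in terms of convergence in law against bounded continuous test functions; since almost sure convergence implies convergence in distribution via the portmanteau theorem, this is essentially free. Everything else is bookkeeping: the finite-sum bound from Remark~\ref{rem:concentration_concatenation_variable_lineairement_concentre} and the passage to the limit from Lemma~\ref{lem:passage_a_la_limite_dans_la_concetration} combine cleanly, so I do not expect any serious obstacle beyond carefully matching the notations of the two ingredients.
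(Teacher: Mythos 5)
Your proposal is correct and follows essentially the same route as the paper: concentration of the finite partial sums via the concatenation result (Corollary~\ref{cor:concentration_concatenation_vecteur_lineaireent_concentre}, in the form of Remark~\ref{rem:concentration_concatenation_variable_lineairement_concentre}), then passage to the limit with Lemma~\ref{lem:passage_a_la_limite_dans_la_concetration}. The paper likewise deduces the required convergence in law of the partial sums from pointwise convergence, only phrasing it through dominated convergence rather than the portmanteau theorem.
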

\begin{proof}
  We already know from Corollary~\ref{cor:concentration_concatenation_vecteur_lineaireent_concentre} that for all $m \in \mathbb N$:
  \begin{align*}
    \sum_{m = 1}^M Z_m \in \sum_{m = 1}^M \tilde Z_m \pm \mathcal E_2 \left(\sum_{m \in \mathbb N} \sigma_m\right),
     \ \ \text{in } (E, \Vert \cdot \Vert).
  \end{align*}
  Thus in order to employ Lemma~\ref{lem:passage_a_la_limite_dans_la_concetration} let us note that for any bounded continuous mapping $f:E \to \mathbb R$, the dominated convergence theorem allows us to set that:
  \begin{align*}
    \mathbb E \left[ f \left(\sum_{m = 1}^M Z_m \right)\right] \underset{M \to \infty} \longrightarrow \mathbb E \left[ f \left(\sum_{m = 1}^\infty Z_m \right)\right],
  \end{align*}
  thus $(\sum_{m = 1}^M Z_m)_{N \in \mathbb N}$ converges in law to $\sum_{m = 1}^\infty Z_m$, which allows us to set the result of the corollary.

\end{proof}
% As will be seen in next corollary, t
The concentration of infinite series directly implies the concentration of resolvents and other related operators (like $(I_n - X/\sqrt p)^{-1} X^k$ for instance). 
\begin{corollaire}\label{cor:Concentration_linearire_solution_implicite_hypo_concentration_phi^k_pour tout_k}
  Given a (sequence of) vector space $(E, \| \cdot \|)$, let $\phi\in \mathcal A(E)$ be a (sequence of) random affine mapping such that there exists a constant $\varepsilon>0$  satisfying $\left\Vert\mathcal L(\phi)\right\Vert \leq 1-\varepsilon$ and a (sequences of) integers $\sigma >0$ satisfying for all (sequence of) integer $k$:
  \begin{align*}%\label{eq:concentration_phi^k_avec_norme_forte}
    & \mathcal L(\phi)^k(\phi(0)) \in \mathcal E_2 \left(\sigma(1-\varepsilon)^k\right)  \ \ \text{in} \  \ (E, \| \cdot \|)
  \end{align*}
  % for some constants $C,c>0$.
  % such that $\mathcal A_\phi \subset $ (recall that $\mathcal L(\phi) = \phi - \phi(0) \in \mathcal L(E)$) and that $\mathbb E_{\mathcal A_\phi}[\|\phi(0)\|] \leq O(1)$. 
  Then the random equation
  \begin{align*}
    Y = \phi(Y)
  \end{align*}
  admits a unique solution $Y = ( Id_E - \mathcal L(\phi))^{-1} \phi(0)$ satisfying the linear concentration:
  \begin{align*}
    Y\in \mathcal E_2(\sigma)  .
  \end{align*}
\end{corollaire}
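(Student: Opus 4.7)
The plan is to express $Y$ as an explicit Neumann series and invoke Corollary~\ref{cor:concentration_serie_vecteur_lineairement_concentres} term by term. Since $\|\mathcal L(\phi)\| \leq 1 - \varepsilon < 1$ almost surely, the operator $Id_E - \mathcal L(\phi)$ is invertible almost surely (its inverse being bounded by $1/\varepsilon$), so the equation $Y = \mathcal L(\phi)(Y) + \phi(0)$ admits as unique solution
\begin{align*}
    Y = (Id_E - \mathcal L(\phi))^{-1} \phi(0) = \sum_{k=0}^{\infty} \mathcal L(\phi)^k(\phi(0)),
\end{align*}
the Neumann series converging absolutely pointwise because $\|\mathcal L(\phi)^k(\phi(0))\| \leq (1-\varepsilon)^k \|\phi(0)\|$.

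Next I would set $Z_k \equiv \mathcal L(\phi)^k(\phi(0))$ and $\sigma_k \equiv \sigma(1-\varepsilon)^k$. The hypothesis yields $Z_k \in \mathcal E_2(\sigma_k)$, so there exist deterministic equivalents $\tilde Z_k$ with $Z_k \in \tilde Z_k \pm \mathcal E_2(\sigma_k)$; a natural choice is $\tilde Z_k = \mathbb E[Z_k]$, which is legitimate in view of Proposition~\ref{pro:concentration_variable_caracterisation_moment}. The total observable diameter telescopes to a geometric series,
\begin{align*}
    \sum_{k \geq 0} \sigma_k = \sigma \sum_{k \geq 0} (1-\varepsilon)^k = \frac{\sigma}{\varepsilon} = O(\sigma),
\end{align*}
since $\varepsilon$ is a fixed constant. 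The deterministic series $\sum_k \tilde Z_k$ is itself well-defined in norm because $\|\tilde Z_k\| = \|\mathbb E[Z_k]\| \leq \mathbb E[\|Z_k\|] \leq (1-\varepsilon)^k \mathbb E[\|\phi(0)\|]$ decays geometrically. All the hypotheses of Corollary~\ref{cor:concentration_serie_vecteur_lineairement_concentres} being met, one concludes
\begin{align*}
    Y = \sum_{k = 0}^\infty Z_k \in \sum_{k \geq 0} \tilde Z_k \pm \mathcal E_2(\sigma/\varepsilon),
\end{align*}
which, up to absorbing the constant $1/\varepsilon$ in the concentration constants, is exactly $Y \in \mathcal E_2(\sigma)$ as claimed.

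The main (rather mild) obstacle is simply the bookkeeping needed to check the convergence hypotheses of Corollary~\ref{cor:concentration_serie_vecteur_lineairement_concentres}: namely, that $\sum_{k} \|Z_k(\omega)\| < \infty$ for almost every $\omega$ and that $\sum_k \tilde Z_k$ is well-defined. Both are consequences of the geometric decay $\|\mathcal L(\phi)^k\| \leq (1-\varepsilon)^k$ together with the integrability of $\phi(0)$ that is implicit in the $k = 0$ instance of the linear-concentration hypothesis. Beyond that verification, the proof is a direct packaging of the series concentration lemma on the Neumann expansion, and no further difficulty arises.
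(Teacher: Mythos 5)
Your proof is correct and follows essentially the same route as the paper: write $Y$ as the Neumann series $\sum_{k\geq 0}\mathcal L(\phi)^k\phi(0)$ and apply Corollary~\ref{cor:concentration_serie_vecteur_lineairement_concentres}, the observable diameters summing geometrically to $\sigma/\varepsilon = O(\sigma)$. Your extra bookkeeping (pointwise convergence, well-definedness of the deterministic series) is exactly what the paper leaves implicit, so there is nothing further to add.
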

In practical examples, $\|\mathcal L(\phi)\|$ is rarely bounded by $1-\varepsilon$ for all drawings of $\phi$ and to obtain the concentration of $\mathcal L(\phi)^k$ with an observable diameter of order $\sigma(1- \varepsilon)^k$, one needs to place oneself on an event $\mathcal A_\phi$ satisfying $\mathcal A_\phi \subset \{\left\Vert\mathcal L(\phi)\right\Vert \leq 1-\varepsilon\}$. Then, thanks to a simple adaptation of Lemma~\ref{lem:concentration_conditionnee_convexe} below to the case of linear concentration, we have the concentration $(Y \ | \ \mathcal A_\phi)\in \mathcal E_2(\sigma)$. When $\mathbb E[\|\mathcal L(\phi)\|] \leq 1-2 \varepsilon$ for $\varepsilon\geq O(1)$ and $\phi$ is sufficiently concentrated, it is generally possible to chose an event $\mathcal A_\phi$ of overwhelming probability. %(for instance $\mathbb P(\mathcal A_\phi) \leq Ce^{-cn}$ for some constants $C,c>0$, when $\phi = \frac{1}{n}XX^T$ and $$) 

% Returning to Subsection~\ref{sse:concentration_lipschitz_resolvent}, note that this corollary could not have been used to set the concentration of $Q^z$ for all $z\in \mathbb C \setminus [0,1-\varepsilon]$, but just for $z \in \{|z| \geq 1-\varepsilon\}$, so that $\mathcal L(\phi)^k(\phi(0)) = (\frac{1}{n}XX^T)^k\overset{\mathcal A_\phi}\in \mathcal E_2 \left((1-\varepsilon)^k/\sqrt n\right) \ | \ e^{- n}$ (thanks to Proposition~\ref{pro:concentration_intelligente_produit}). Anyway, even for $z \in \{|z| \geq 1-\varepsilon\}$, one would just have obtained a linear concentration of $Q^z$. 
As it will be seen in Subsection~\ref{sse:convex_concentration_resolvent}, this corollary finds its relevancy under convex concentration hypotheses, where the linear concentration seems to be the best concentration property to obtain on the resolvent $Q^z = (z I_p - \frac{1}{n}XX^T)^{-1}$.
% Although this theorem is far easier to use than Theorems~\ref{the:COncentration_solution_conentrated_equation_phi_affine_k_leq_log_eta} or~\ref{the:COncentration_lineaire_solution_conentrated_equation_phi_affine_k_leq_log_eta_bis}, we did not give directly because the complex setting of Theorem~\ref{the:COncentration_solution_conentrated_equation_phi_affine_k_leq_log_eta} can be adapted more easily to study afterwards Lipschitz concentration of solutions to non affine equation.
\begin{proof}
  By contractivity of $\phi$, $Y$ is well defined and expresses:
  \begin{align*}
    Y = ( Id_E - \mathcal L(\phi))^{-1} \phi(0) = \sum_{k=0}^\infty \mathcal L(\phi)^k \phi(0).
  \end{align*}
  % For all $m \in \mathbb N$, we know from Corollary~\ref{cor:concentration_concatenation_vecteur_lineaireent_concentre} that $\sum_{k=0}^m \mathcal L(\phi)^k \phi(0) \in \mathcal E_2(\sigma\sum_{k=0}^m (1-\varepsilon)^k)$, thus letting $m$ tend to infinity (this asymptotic inference is a simple consequence of the definition of the concentration), 
  One can then conclude with Corollary~\ref{cor:concentration_serie_vecteur_lineairement_concentres} that $Y \in \mathcal E_2(\sigma/\varepsilon) = \mathcal E_2(\sigma)$.
\end{proof}

In order to satisfy the hypothesis of Corollary~\ref{cor:Concentration_linearire_solution_implicite_hypo_concentration_phi^k_pour tout_k}, but also for independent interest, we are now going to express the concentration of the product of convexly concentrated random matrices.
% \section{Concentration under highly probable event}
% \label{sse:conc_under_highly_probable_event}
% % {\color{blue}

\section{Degeneracy of convex concentration through product}\label{sse:convex_concentration}
% There exists an interesting concentration property weaker than the Lipschitz concentration and stronger than the 

Given two convexly concentrated random vectors $X,Y \in E$ satisfying $X,Y \propto_c \mathcal E_2(\sigma)$, the convex concentration of the couple $(X,Y) \propto_c \mathcal E_2(\sigma)$ is ensured if:
\begin{enumerate}
  \item $X$ and $Y$ are independent
  \item $(X,Y) = u(Z)$ with $u$ affine and $Z\propto_c \mathcal E_2(\sigma)$.
\end{enumerate}
We can then in particular state the concentration of $X+Y$ as it is a linear transformation of $(X,Y)$. For the product it is not as simple as for the Lipschitz concentration, let us first consider the particular case of the entry-wise product in $E = \mathbb R^p$.
% : we will consider the special cases $E = \mathbb R^p$, $E = \mathcal M_{p,n}$ to retrieve interesting properties. 
Since this result is not important for the rest of the paper, we left its proof in \ref{app:concentration_produit_vecteurs}.
% All the coming study is based on a preliminary elementary result that does not need any proof.

\begin{theorem}\label{the:concentration_convexe_produit_odot_Rp}
  Given a (sequences of) integer $m \in \mathbb N^{\mathbb N}$ and a (sequence of) positive number $\sigma >0$ such that $m \leq O(p)$, a (sequence of) $m$ random vectors $X_1,\ldots,X_m \in \mathbb R^p$, if we suppose that 
  % Given three sequences $m \in \mathbb N^{\mathbb N}$ and $\sigma,\eta \in \mathbb R_ + ^{\mathbb N}$ such that $\eta \geq  O(\log(p))$ and $\log m \leq O(\eta)$, $m$ random vectors $X_1,\ldots,X_m \in \mathbb R^p$ such that $\sup_{1\leq i \leq m}\|\mathbb E[X_i] \|_\infty = O((\log p)^{1/q})$, if we suppose that%\footnote{The tail controlling term $e^{-\eta}$ with $\eta \leq  O(\log(p))$ might seem unsatisfactory, the concentration of the product can still be improved to become for any $\eta$}
  \begin{align*}
    X \equiv(X_1,\ldots,X_m) \propto_c \mathcal E_2(\sigma)
    &\text{ in } \left((\mathbb R^p)^m, \| \cdot \|_{\ell^\infty}\right),
   \end{align*}
   (with the notation $\|\cdot \|_{\ell^\infty}$ defined in \eqref{eq:definition_l_infty}) % for any $z= (z_1,\ldots,z_m) \in (\mathbb R^p)^m$, $\|z\|_{\ell^\infty} = \sup_{1\leq i\leq m} \|z_i\|$, 
   % For those who need a better rate than $C e^{-c\log p}$ for the probability to be outside the concentration zone, we present a result similar to Theorem~\ref{the:Concentration_produit_de_vecteurs_d_algebre_optimise} in the case where $X \propto_c \mathcal E_2(\sigma)$:
   % To simplify the preceding inequality, we add that 
   and that there exists a (sequence of) positive numbers $\kappa >0$ such that $\forall i\in [m]: \|X_i \|_\infty \leq \kappa$, then:
   % if there exists a (sequence of) positive numbers $\kappa >0$ and two constants $C,c >0$ such that, noting $\mathcal A_\kappa \equiv \{\forall i\in [m]: \|X_i \|_\infty \leq \kappa\}$, $\mathbb P(\mathcal A_\kappa^c) \leq C e^{-cp}$, then:
   \begin{align*}
    X_1\odot \cdots \odot X_m \in \mathcal E_2 \left( (2e\kappa)^{m-1} \sigma\right)  \ \ \ \text{in } \ (\mathbb R^p, \|\cdot\|).
   \end{align*}
   And if $X_1=\cdots=X_m =X$, the constant $2e$ is no more needed and we get the concentration $X^{\odot m} \in \mathcal E_2 \left( \kappa^{m-1} \sigma\right)$.
\end{theorem}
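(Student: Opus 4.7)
My plan is to use the moment characterisation of linear concentration (Proposition~\ref{pro:concentration_variable_caracterisation_moment}): for every unit vector $u \in \mathbb{R}^p$, I will bound $\mathbb{E}\bigl[|u^T P_m - \mathbb{E}[u^T P_m]|^r\bigr] \leq C r^{r/2}\bigl((2e\kappa)^{m-1}\sigma\bigr)^r$ where $P_m := X_1 \odot \cdots \odot X_m$. The key algebraic tool is the polarisation identity
\[
  X_1 \odot \cdots \odot X_m = \frac{1}{m!\, 2^m} \sum_{\epsilon \in \{\pm 1\}^m} \epsilon_1 \cdots \epsilon_m \Bigl(\sum_{i=1}^m \epsilon_i X_i\Bigr)^{\odot m},
\]
which trades the non-convex Hadamard product for $2^m$ Hadamard powers $Y_\epsilon^{\odot m}$ of affine combinations $Y_\epsilon := \sum_i \epsilon_i X_i$ satisfying $\|Y_\epsilon\|_\infty \leq m\kappa$ on the support. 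The point is that each $Y_\epsilon$ is an \emph{affine} transformation of $X$, so the convex concentration of $X$ transfers to it verbatim.

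For each $\epsilon$, I split $u = u^+ - u^-$ into its coordinate-wise nonnegative parts, so that I may treat $g_\epsilon^\pm(X) := (u^\pm)^T Y_\epsilon^{\odot m}$ individually. Assume first $m$ is even, so that $t \mapsto t^m$ is convex on $\mathbb{R}$; then $g_\epsilon^\pm$ is a nonnegative combination of convex functions of the linear quantity $Y_\epsilon$, hence convex in $X$. Its partial gradient reads $\nabla_{X_i} g_\epsilon^\pm = \epsilon_i\, m\, u^\pm \odot Y_\epsilon^{\odot(m-1)}$, and on the event $\{\|X_i\|_\infty \leq \kappa\ \forall i\}$ one gets $\|\nabla_{X_i} g_\epsilon^\pm\|_2 \leq m(m\kappa)^{m-1}$. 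Replacing $t \mapsto t^m$ outside $[-m\kappa, m\kappa]$ by its convex affine extension produces a function that agrees with $g_\epsilon^\pm$ almost surely while being globally $m^2(m\kappa)^{m-1}$-Lipschitz for the $\ell^\infty$ norm on $(\mathbb{R}^p)^m$. The convex concentration of $X$ then yields $\mathbb{E}[|g_\epsilon^\pm - \mathbb{E}[g_\epsilon^\pm]|^r] \leq C r^{r/2}(m^2(m\kappa)^{m-1}\sigma)^r$.

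Aggregating the $2^{m+1}$ terms with the prefactor $(m!\, 2^m)^{-1}$ via $|\sum_{k=1}^N a_k|^r \leq N^{r-1}\sum_k |a_k|^r$ gives
\[
  \mathbb{E}\bigl[|u^T P_m - \mathbb{E}[u^T P_m]|^r\bigr] \leq C r^{r/2}\Bigl(\frac{2\, m^{m+1}\,\kappa^{m-1}\,\sigma}{m!}\Bigr)^r,
\]
and Stirling's inequality $m! \geq (m/e)^m\sqrt{2\pi m}$ absorbs $m^{m+1}/m!$ into a factor of order $e^m$, producing the claimed $(2e\kappa)^{m-1}\sigma$ scale. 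In the specialisation $X_1 = \cdots = X_m = X$ the polarisation identity becomes trivial; I instead apply convex concentration directly to $f_\pm(X) = (u^\pm)^T X^{\odot m}$, which is convex with Euclidean Lipschitz constant $m\kappa^{m-1}$ after truncation. This avoids the factorial accounting and the $2^m$ terms of polarisation, absorbing the extraneous $m$ into the universal constants of $\mathcal E_2$ and yielding the sharper $\kappa^{m-1}\sigma$ announced in the theorem.

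The main obstacle is the case of odd $m$, for which $t \mapsto t^m$ is not convex on $\mathbb{R}$ and the convexity of $g_\epsilon^\pm$ breaks down. A natural workaround is to write $t^m = t \cdot t^{m-1}$ to reduce to the convex $t \mapsto t^{m-1}$ at the cost of one additional polarisation step, which preserves the $(2e\kappa)^{m-1}$ scaling; alternatively, one can partition coordinates according to the sign of $(Y_\epsilon)_j$ and apply convex concentration piecewise using the globally convex function $t \mapsto |t|^m$. The delicate accounting in either route, ensuring that the $m$-dependence in the exponential base stays $(2e)^{m-1}$ rather than something larger, is the crux of the proof.
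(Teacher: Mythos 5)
Your polarisation identity and the even-$m$ argument are sound, and the way the $2^{-m}$ prefactor of the polarisation cancels the $2^m$ terms handles the combinatorics at least as cleanly as the paper, which instead uses the Rota-inversion identity of Lemma~\ref{lem:decomposition_poly_symetrique} to write $X_1\odot\cdots\odot X_m = \frac{(-1)^m}{m!}\sum_{I\subset[m]}(-1)^{|I|}\bigl(\sum_{i\in I}X_i\bigr)^{\odot m}$; your truncation by a convex affine extension and the moment-based aggregation also match the paper's use of concatenation plus Proposition~\ref{pro:concentration_variable_caracterisation_moment}. The genuine gap is the odd-$m$ case, which you yourself defer as ``the crux'': this is exactly where the paper's key idea lies, and neither of your sketched workarounds closes it. Writing $t^m = t\cdot t^{m-1}$ does not produce a quasi-convex observation (a product of a linear factor and a convex power is neither convex nor quasi-convex, and the promised extra polarisation step is not exhibited), while partitioning coordinates according to the sign of $(Y_\epsilon)_j$ is inadmissible because the sign pattern is random: the resulting functional is no longer a fixed quasi-convex $1$-Lipschitz test function of $X$, and Definition~\ref{def:cnocentration_convexe} only controls deterministic observations. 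The paper's fix is deterministic and works for every parity: decompose $p_m = p_m^+ - p_m^-$ with $p_m^+(t)=\max(t^m,0)$ and $p_m^-(t)=-\min(t^m,0)$, both convex for all $m$, and simultaneously split the coefficient vector into nonnegative parts (as you do with $u^\pm$), so that $u^T Y_\epsilon^{\odot m}$ becomes a signed combination of four quasi-convex Lipschitz functionals.

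A second, smaller but real, problem is the equal-vector case: your Lipschitz constant there is $m\kappa^{m-1}$ and you propose to ``absorb the extraneous $m$ into the universal constants of $\mathcal E_2$''. This is not allowed: $m$ is a sequence that may grow with $p$ (the theorem only assumes $m\leq O(p)$), whereas the constants $C,c$ in Definition~\ref{def:linear_concentration} must be uniform in the dimension; absorbing $m$ silently replaces the claimed observable diameter $\kappa^{m-1}\sigma$ by $m\kappa^{m-1}\sigma$. (In the distinct-vector case your extra factor $m^2$ is harmless only because it is dominated by the $2^{m-1}$ you save through polarisation, so that part of your accounting is fine.) Recovering the stated $\kappa^{m-1}\sigma$ therefore requires a sharper treatment of the single-vector case, as in the paper's proof, not a constant absorption.
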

\begin{remark}\label{rem:controle_de_la_queue_de_distribution}
  If we replace the strong assumption $\forall i\in [m]: \|X_i \|_\infty \leq \kappa$, with the bound $\sup_{1\leq i \leq m}\|\mathbb E[X_i] \|_\infty \leq O((\log p)^{1/q})$ we can still deduce a similar result to \cite[Example 4.]{LOU21HV}, stating the existence of a constant $\kappa \leq O(1)$ such that:
   \begin{align*}
    X_1\odot \cdots \odot X_m \in \mathcal E_2 \left( \left(\kappa \sigma\right)^{m} (\log(p))^{(m-1)/q}\right) + \mathcal E_{q/m} \left( \left(\kappa \sigma\right)^{m}\right)  \ \ \ \text{in } \ (\mathbb R^p, \|\cdot\|).
   \end{align*}
\end{remark}
The result of concentration of a product of matrices convexly concentrated was already proven in \cite{MEC11} but since their formulation is slightly different, we reprove in \ref{app:concentration_produit_vecteurs} the following result with the formulation required for the study of the resolvent.

\begin{theorem}[\cite{MEC11}, Theorem 1]\label{the:concentration_lineaire_produit_matrice_convexement_concentres}
  \sloppypar{Let us consider three sequences $m \in \mathbb N^{\mathbb N}$ and $\sigma,\kappa \in \mathbb R_ + ^{\mathbb N}$, and a sequence of $m$ random random matrices $X_1 \in \mathcal{M}_{n_0,n_1},\ldots, X_m \in \mathcal{M}_{n_{m-1},n_m}$, satisfying\footnote{The norm $\|\cdot \|_{F}$ is defined on $\mathcal{M}_{n_{0},n_1} \times \cdots\times \mathcal{M}_{n_{m-1},n_m}$ by the identity: $$\|(M_1,\ldots,M_p)\|_F = \sqrt{\|M_1\|_F^2 + \cdots + \|M_m\|_F^2}$$}:%\footnote{If we take a general assumption $X \propto \mathcal E_2$ for $q>0$, one has to assume $\|X \| \leq \kappa n^{1/q}$, then we obtain the concentration $(X/ n^{1/q})^m \in \mathcal E_2 \left(\sqrt n \left(2e\kappa \right)^{m-1} \sigma\right)$}:%\footnote{The tail controlling term $e^{-\eta}$ with $\eta \leq  O(\log(p))$ might seem unsatisfactory, the concentration of the product can still be improved to become for any $\eta$}
  \begin{center}
    $(X_1,\ldots,X_m) \propto_c \mathcal E_2(\sigma)$ in $\left(\mathcal{M}_{n_{0},n_1} \times \cdots\times \mathcal{M}_{n_{m-1},n_m}, \| \cdot \|_F\right),$
  % $X \propto_c \mathcal E_2$
  %   in $\left(\mathcal M_n, \| \cdot \|_F\right),$\footnote{Nothing is changed if we rather assume $X \propto_c \mathcal E_2.}
    \end{center}
    In the particular case where $X_1=\cdots = X_n \equiv X$, it is sufficient\footnote{Be careful that $X \propto_c \mathcal E_2(\sigma)$ does not imply that $(X,\ldots, X) \propto_c \mathcal E_2(\sigma)$, it is only true when $(\mathcal{M}_{n})^m $ is endowed with the norm $\|\cdot \|_{F,\ell^\infty}$, satisfying for any $M= (M_1,\ldots,M_m) \in (\mathcal M_{n})^m$, $\|M\|_{F,\ell^\infty} = \sup_{1\leq i\leq m} \|M_i\|_F$} to assume that $X \propto_c \mathcal E_2(\sigma)$ in $(\mathcal{M}_{n}, \|\cdot \|_F)$.
   % with, for any $M= (M_1,\ldots,M_m) \in (\mathbb S_p^+)^m$, $\|M\|_{\ell^\infty} = \sup_{1\leq i\leq m} \|M_i\|$, 
   % with, for any $M= (M_1,\ldots,M_m) \in (\mathcal M_n)^m$, $\|M\|_{F,\ell^\infty} = \sup_{1\leq i\leq m} \|M_i\|_F$.
   If there exist a sequence of positive values $\kappa>0$ such that $\forall i\in[m], \|X_i \| \leq \kappa $, then the product is concentrated for the nuclear norm:
   % and two constants $C,c >0$ such that $\mathbb P(\|X \| \geq \kappa) \leq C e^{-cp}$,
   % then:
   \begin{align*}
    X_1\cdots X_m \in \mathcal E_2 \left(\kappa^{m-1} \sigma \sqrt{n_0+\cdots + n_m}\right) &
    % (X_1/\sqrt n)\cdots (X_m/\sqrt n) \in \mathcal E_2 \left(\sqrt n \left(2e\kappa \right)^{m-1} \sigma\right) &
    &\text{ in } \left(\mathcal M_{n_0,n_m}, \| \cdot \|_*\right),
   \end{align*}
   where, for any $M \in \mathcal M_{n_0,n_m}$, $\|M\|_* = \tr(\sqrt{MM^T})$ (it is the dual norm of the spectral norm). 
   % When $X_1 = \cdots X_m \equiv X \in \mathcal{M}_{p,n}$, $X^m \in \mathcal E_2 \left(\kappa^{m-1} \sigma \sqrt{n +p}\right)$
   % With the same hypotheses, if we assume this time that $X_1,\ldots, X_m \in \mathcal M_{p,n}$, we have the concentration:
   % If we consider $X \in \mathcal M_{p,n}$ satisfying $X \proptp \mathcal E_2(\sigma)$
   % $\|X \| \leq \kappa$, we have the concentrations
   % $\|\mathbb E[X] \| = O((p+n)^{1/q})$, we have the concentrations
   % \begin{align*}
   %  X_1X_1^T \cdots X_mX_m^T \in \mathcal E_2 \left(\sqrt{p+n} \left(2e\kappa \right)^{2m-1} \sigma\right) &
   %  % (XX^T/(p+n))^m \in \mathcal E_2 \left(\sqrt{p+n} \left(2e\kappa \right)^{2m-1} \sigma\right) \ | \ e^{-(p+n)}&
   %  &\text{ in } \left(\mathcal M_{p}, \| \cdot \|_*\right),\\
   %  % (XX^T)^mX \in \mathcal E_2 \left(\sqrt{p+n} \left(2e\kappa \right)^{2m} \sigma\right) \ | \ e^{-(p+n)}&
   %  % &\text{ in } \left(\mathcal M_{p,n}, \| \cdot \|_*\right)
   % \end{align*}
   }

\end{theorem}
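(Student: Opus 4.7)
The plan is to prove the nuclear-norm concentration via its dual characterization and a telescoping against an independent copy. Since $\|M\|_{*} = \sup_{\|A\|\leq 1}|\tr(AM)|$, linear concentration of the product $Y := X_1 \cdots X_m$ for $\|\cdot\|_{*}$ with observable diameter $\tau := \kappa^{m-1}\sigma\sqrt{n_0+\cdots+n_m}$ is equivalent, by Definition~\ref{def:linear_concentration} and Proposition~\ref{pro:concentration_variable_caracterisation_moment}, to a sub-Gaussian centered-moment bound for the scalar $\tr(AY)$, uniform in deterministic $A$ with $\|A\|\leq 1$. I will work throughout with the moment formulation.

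Introduce an independent copy $X' = (X_1',\ldots,X_m')$ of $X$; the joint vector $(X,X')$ then still satisfies $\propto_c \mathcal E_2(\sigma)$ in the product Frobenius norm (the independence case of the stability properties listed at the start of Section~\ref{sse:convex_concentration}). Jensen gives $\|\tr(AY)-\mathbb E\tr(AY)\|_{L^r}\leq \|\tr(AY)-\tr(AY')\|_{L^r}$, and one telescopes the inner difference as $\sum_{i=1}^m T_i$, where
\begin{align*}
T_i := \tr\bigl(A X_1\cdots X_{i-1}(X_i-X_i')X_{i+1}'\cdots X_m'\bigr) = \langle C_i^T,\, X_i-X_i'\rangle_F,
\end{align*}
with $C_i := X_{i+1}'\cdots X_m' A X_1 \cdots X_{i-1}\in\mathcal M_{n_i,n_{i-1}}$ after cyclic rearrangement of the trace. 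The almost-sure operator bounds $\|X_j\|,\|X_j'\|\leq \kappa$ and $\|A\|\leq 1$ give $\|C_i\|\leq \kappa^{m-1}$, and the rank-dimension inequality upgrades this to $\|C_i\|_F \leq \kappa^{m-1}\sqrt{\min(n_{i-1},n_i)}$.

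The main obstacle is to bound each $\mathbb E|T_i|^r$ at the correct sub-Gaussian rate. As a function of the joint variable $(X,X')$, the term $T_i$ is multilinear of total degree $m$ and hence \emph{not} quasi-convex in general, so Definition~\ref{def:cnocentration_convexe} cannot be applied to it directly. My strategy is to freeze the ``outer'' block $(X_j)_{j<i}\cup(X_j')_{j>i}$: under this conditioning $C_i$ is deterministic and $T_i$ becomes a genuine linear (hence quasi-convex) functional of the inner pair $(X_i,X_i')$, with Frobenius Lipschitz constant $\|C_i\|_F$. Applying the joint convex concentration in moment form to this conditional linear functional and then integrating over the outer block against the almost-sure envelope $\|C_i\|_F\leq \kappa^{m-1}\sqrt{\min(n_{i-1},n_i)}$ should produce
\begin{align*}
\mathbb E|T_i|^r \;\leq\; C\, r^{r/2}\bigl(\kappa^{m-1}\sigma\sqrt{\min(n_{i-1},n_i)}\bigr)^r.
\end{align*}
The subtle point, and where I expect to spend the most care, is legitimizing this conditional use of convex concentration, which does not descend to conditional laws in full generality. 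The combination of the independent-copy trick (so that $X_i'$ is independent of everything being conditioned on) together with the deterministic $\kappa$-envelope on $C_i$ is what should make the argument go through, and also explains why the equal-marginals case $X_1=\cdots=X_m=X$ remains valid under only $X\propto_c \mathcal E_2(\sigma)$ in Frobenius norm. If the conditional step turns out delicate to formalize, a fall-back is induction on $m$, using the $m-1$ case on the flanking sub-products in the spirit of Theorem~\ref{the:concentration_convexe_produit_odot_Rp}.

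Summing the $m$ pieces by Minkowski and using the elementary bound $\sum_{i=1}^m\min(n_{i-1},n_i)\leq 2(n_0+\cdots+n_m)$ recombines the terms into the target rate $\tau$ up to an absolute constant, which by Proposition~\ref{pro:concentration_variable_caracterisation_moment} and the duality reduction of the first paragraph closes the proof.
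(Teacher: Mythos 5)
There is a genuine gap, and it sits exactly where you flagged it: the conditional use of convex concentration. The theorem assumes only \emph{joint} convex concentration of $(X_1,\ldots,X_m)$, with no independence between the factors, and its essential case --- the one the paper actually needs for the resolvent series --- is $X_1=\cdots=X_m=X$. Convex concentration of a vector says nothing about the conditional law of one block of coordinates given the others, and in the equal-factor case your conditioning is vacuous in the worst possible way: for $2\leq i\leq m-1$ the outer block $(X_j)_{j<i}\cup(X_j')_{j>i}$ already contains $X$ and $X'$ entirely, so the conditional law of $(X_i,X_i')$ is a Dirac mass, $T_i=\langle C_i^T, X-X'\rangle_F$ is a fixed number under the conditioning, and the conditional moment bound $\mathbb E\bigl[|T_i|^r\mid \text{outer}\bigr]\leq C r^{r/2}\bigl(\kappa^{m-1}\sigma\sqrt{\min(n_{i-1},n_i)}\bigr)^r$ you need is simply false: that fixed number can be of order $\|C_i\|_F\|X-X'\|_F\sim \kappa^{m-1}\sigma\, n$ rather than $\kappa^{m-1}\sigma\sqrt n$, because all the centering that makes $T_i$ small has been conditioned away. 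Your scheme is essentially an Efron--Stein/martingale-difference decoupling, which is sound when the $X_i$ are mutually independent (then the conditional law of $(X_i,X_i')$ is its marginal and $T_i$ is conditionally centered), but independence is precisely what is not assumed; the independent copy $X'$ decouples $X'$ from $X$, not the coordinates of $X$ from one another, and the ``induction on $m$'' fall-back meets the same obstruction. This is why the paper takes a completely different route: it reduces the general product to powers of a single block matrix $Y$ carrying $X_1,\ldots,X_m$ on a cyclic off-diagonal, and treats powers of one matrix through the singular-value map, transversal concentration (Theorem~\ref{the:concentrtaion_transversale_matrice_vecteurs}) and the entry-wise power result (Theorem~\ref{the:concentration_convexe_produit_odot_Rp}), so that convexity is exploited on symmetric functionals of the singular values rather than on conditional linear forms.

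A secondary quantitative problem would remain even if the conditional step were granted: Minkowski gives $\sum_{i=1}^m\|T_i\|_{L^r}\leq C\sqrt r\,\kappa^{m-1}\sigma\sum_{i=1}^m\sqrt{\min(n_{i-1},n_i)}$, and the inequality $\sum_i\min(n_{i-1},n_i)\leq 2(n_0+\cdots+n_m)$ sits on the wrong side of the square root: with all $n_i=n$ your bound is of order $m\sqrt n$, while the target $\kappa^{m-1}\sigma\sqrt{n_0+\cdots+n_m}$ is of order $\sqrt{m}\sqrt{n}$. Recovering the stated rate from a term-by-term decomposition would require a square-function (Burkholder/Azuma-type) argument, i.e.\ again conditional structure that the hypotheses do not provide.
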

\begin{remark}\label{rem:quand_X_non_borne}
  The hypothesis $\|X\|\leq \kappa$ might look quite strong, however in classical settings where $X \propto \mathcal E_2$ and $\|\mathbb E[X]\|\leq O(\sqrt n)$ it has been shown that there exist three constants $C,c, K >0$ such that $\mathbb P(\|X \| \geq K\sqrt n) \leq C e^{-cn}$. Placing ourselves on the event $\mathcal A = \{\|X \| \leq K\sqrt n\}$, we can then show from Lemma~\ref{lem:concentration_conditionnee_convexe} below that:
  \begin{align*}
    \left( (X/\sqrt n)^m \ | \ \mathcal A \right) \in \mathcal E_2 \left(K^{m-1}/\sqrt n\right)&
    &\text{and}&
    &\mathbb P(A^c) \leq C e^{-cn},
  \end{align*}
  (here $\sigma = 1/\sqrt n$ and $\kappa = K$). The same inferences hold for the concentration of $(XX^T/(n+p))^m$.
\end{remark}

We end this section on the concentration of the product of convexly concentrated random vectors with the Hanson-Wright Theorem that will find some use of the estimation of $\mathbb E[Q^z]$. This result was first proven in \cite{ADA14}, an alternative proof with our notations is provided in \cite[Proposition 8]{LOU21HV}\footnote{This paper only studies the Lipschitz concentration case, however, since quadratic forms are convex, the arguments stays the same with convex concentration hypotheses.}.
\begin{theorem}[\cite{ADA14}]\label{the:hanson_wright}
  Given two random matrices $X,Y \in \mathcal{M}_{p,n}$ such that $(X,Y) \propto_c \mathcal E_2$ and $\|\mathbb E[X]\|_F,\|\mathbb E[Y]\|_F\leq O(1)$, for any $ A \in \mathcal{M}_{p}$:
  \begin{align*}
    Y^TAX \in \mathcal E_2(\|A\|_F) + \mathcal E_1(\|A\|).
  \end{align*}
\end{theorem}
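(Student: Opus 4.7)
The plan is to reduce the matrix concentration to a scalar concentration of a bilinear form, and then to invoke a Hanson--Wright-type inequality for convexly concentrated vectors.

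\textbf{Reduction to a scalar bilinear form.} Fix a unit-Frobenius linear form on $\mathcal{M}_n$, i.e.\ $M \mapsto \tr(BM)$ with $\|B\|_F \leq 1$. Writing $x = \mathrm{vec}(X)$, $y = \mathrm{vec}(Y)$ and $M_0 = B^T \otimes A$, one has $\tr(B Y^T A X) = y^T M_0 x$, and the Kronecker identities yield $\|M_0\|_F = \|B\|_F \|A\|_F \leq \|A\|_F$ and $\|M_0\| = \|B\|\|A\| \leq \|A\|$. Proving the scalar bound $y^T M_0 x \in \mathcal E_2(\|M_0\|_F) + \mathcal E_1(\|M_0\|)$ thus establishes the theorem.

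\textbf{Symmetric embedding.} Let $z = (x, y) \in \mathbb R^{2pn}$ and $\tilde M = \tfrac{1}{2}\begin{pmatrix}0 & M_0^T \\ M_0 & 0\end{pmatrix}$, so that $y^T M_0 x = z^T \tilde M z$ with $\|\tilde M\| \leq \|M_0\|$ and $\|\tilde M\|_F \leq \|M_0\|_F$. Since $(X,Y) \propto_c \mathcal E_2$ and the stacking/vectorization is an affine isometry, Proposition~\ref{pro:stabilite_concentration_convexe_affine} gives $z \propto_c \mathcal E_2$. The task becomes a Hanson--Wright estimate for the quadratic form $z^T \tilde M z$ of a convexly concentrated vector.

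\textbf{Hanson--Wright via decoupling and conditioning.} Write $\tilde M = D + N$ with $D = \mathrm{diag}(\tilde M)$ and $N$ the off-diagonal part. Split $D = D_+ - D_-$ with $D_\pm \succeq 0$ diagonal; each $\|D_\pm^{1/2} z\|$ is convex and $\|D_\pm\|^{1/2}$-Lipschitz, hence concentrates by Theorem~\ref{the:talagrand}, and squaring yields an $\mathcal E_2(\|D\|_F) + \mathcal E_1(\|D\|)$ control of the diagonal quadratic. For $N$, classical decoupling bounds the tail of $z^T N z - \mathbb E[z^T N z]$ by that of $2\,z^T N z'$, with $z'$ an independent copy of $z$. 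Conditionally on $z'$ the map $z \mapsto z^T N z'$ is linear, so by convex concentration of $z$ it concentrates at rate $\|Nz'\|$; unconditionally $z' \mapsto \|Nz'\|$ is convex and $\|N\|$-Lipschitz, hence concentrates around a mean bounded by $\sqrt{\mathbb E[\|Nz'\|^2]} = \sqrt{\tr(N\mathbb E[z'z'^T]N^T)} \lesssim \|N\|_F$, using that $\|\mathbb E[zz^T]\| \leq O(1)$ (a consequence of $z \propto_c \mathcal E_2$ together with the $O(1)$ bound on $\|\mathbb E X\|_F, \|\mathbb E Y\|_F$). Integrating the two conditional tails in the standard way gives the mixed $\mathcal E_2(\|N\|_F) + \mathcal E_1(\|N\|)$ estimate. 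Recombining the diagonal and off-diagonal contributions and unwinding the embedding yields the claim.

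\textbf{Main obstacle.} The genuinely delicate point is obtaining the Frobenius rate in the sub-Gaussian part. A naive application of Talagrand to $\sqrt{z^T \tilde M_+ z}$ only produces the rate $\sqrt{\|\tilde M\|\,\|\tilde M\|_*}$, which in general is strictly larger than $\|\tilde M\|_F$. Capturing the optimal $\|\tilde M\|_F$ rate forces the decoupling step together with the second-moment bound on $\|Nz'\|$, and verifying that decoupling survives under the \emph{convex} concentration of $z$ (rather than genuine coordinate independence) is the most technical step; for this I would follow Adamczak~\cite{ADA14}.
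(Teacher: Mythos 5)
The first thing to note is that the paper does not prove this theorem at all: it is stated as a quoted result of \cite{ADA14}, with a pointer to \cite[Proposition 8]{LOU21HV} for a proof in the paper's notation (plus the remark that the Lipschitz-concentration argument carries over because quadratic forms are convex). Your reduction layer is correct and worthwhile: testing against $M\mapsto\tr(BM)$ with $\|B\|_F\leq 1$, the identity $\tr(BY^TAX)=\mathrm{vec}(Y)^T(B^T\otimes A)\mathrm{vec}(X)$ and the Kronecker norm identities do reduce the matrix statement to a scalar Hanson--Wright bound for the symmetrized matrix $\tilde M$, and your use of $\|\mathbb E[X]\|_F,\|\mathbb E[Y]\|_F\leq O(1)$ to get $\|\mathbb E[zz^T]\|\leq O(1)$ is exactly where those hypotheses enter. (Two small points: the theorem does not specify the norm on $\mathcal M_n$, and your Frobenius-dual reading is the natural one and suffices for the paper's only application, where $X,Y$ are single columns; also, your $\tilde M$ has zero diagonal, so the diagonal step is vacuous here --- which is fortunate, since squaring the concentration of $\|D_\pm^{1/2}z\|$ would only give the rate $\sqrt{\|D\|\tr(D_\pm)}$, not $\|D\|_F$.)

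The genuine gap is the decoupling step. Classical decoupling inequalities compare $z^TNz$ with $z^TNz'$ only when the \emph{coordinates} of $z$ are independent; here $z$ is merely convexly concentrated, with no independence structure whatsoever assumed between (or within) $X$ and $Y$, so the passage from $z^TNz-\mathbb E[z^TNz]$ to $2\,z^TNz'$ is unjustified --- and this is not a technicality confined to the sub-Gaussian rate, it is the whole content of the cited theorem. Your conditional argument for the decoupled chaos (linearity in $z$ given $z'$, convexity and $\|N\|$-Lipschitz character of $z'\mapsto\|Nz'\|$, second-moment bound by $\|N\|_F$) is fine, but it only proves concentration of the decoupled bilinear form. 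Since you ultimately defer the missing bridge to Adamczak, your proposal is in effect the same as the paper's treatment (a citation of \cite{ADA14}), dressed with a correct matrix-to-scalar reduction; to make it a self-contained proof you would need to reproduce Adamczak's argument (or the one of \cite[Proposition 8]{LOU21HV}), which does not go through classical decoupling.
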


\section{Concentration of the resolvent of the sample covariance matrix of convexly concentrated data}\label{sse:convex_concentration_resolvent}
% lemma allows us to treat the concentration of observations $f(Z)$ when $f$ is Lipschitz and quasi-convex only on a convex subset $A$ and $\{Z\in A\}$ has positive probability non decreasing with the dimension.
\subsection{Assumptions on $X$ and ``concentration zone'' of the resolvent}
%This subsection is the only one that employs results of Section~\ref{sec:concentration_of_the_solutions_to_conc_eq}, to show that the resolvent is concentrated ( recall that in the case of a Lipschitz concentration hypothesis on $X$, it is more simple to show that the resolvent is a $O(1/\sqrt n)$-Lipschitz transformation of $X$). 
Given $n$ data $x_1,\ldots, x_n \in \mathbb R^p$, to study the eigen behavior of the sample (non centered) covariance matrix $\frac{1}{n}XX^T$, where $X = (x_1,\ldots, x_n) \in \mathcal{M}_{p,n}$, one classically studies the resolvent $Q^z = (zI_p - \frac{1}{n} XX^T)^{-1} $ for the values of $z$ where it is defined. Let us note the $p$ eigen values of $\frac{1}{n}XX^T$: $\lambda_i = \sigma_i(\frac{1}{n}XX^T)$, for $i \in [p]$ ( then $\lambda_1\geq \cdots \geq \lambda_n$), then the spectral distribution of $\frac{1}{n}XX^T$:
\begin{align*}
  \mu = \frac{1}{p} \sum_{i=1}^p \delta_i
\end{align*}
has for Stieltjes transform $g : z \mapsto \frac{1}{p} \tr (Q^z)$. 
% Understanding the concentration of $Q^z$ and approximate its expectation gives us direct inferences on $\mu$. 

The present study was already lead in previous papers in the case of Lipschitz concentration of $X$ \cite{LOU21RHL}, or in the case of convex concentration of $X$ but with negative $z$ \cite{LOU19}. The goal of this section, is manly to present the consequences of Theorem~\ref{the:concentration_lineaire_produit_matrice_convexement_concentres} and adapt the recent results of \cite{LOU21RHL} on the case of convex concentration.
We adopt here classical hypotheses and assume a convex concentration for $X=(x_1,\ldots, x_n)$.
\begin{assumption}[Convergence scheme]\label{ass:p_plus_petit_que_n}
  $p = O(n)$.
\end{assumption}
\begin{assumption}[Independence]\label{ass:independence_x_i}
  $x_1,\ldots, x_n$ are independent.
\end{assumption}
\begin{assumption}[Concentration]\label{ass:concentration_covexe_X}
  $X \propto_c \mathcal E_2$.
\end{assumption}
\begin{assumption}[Bounding condition\footnote{As already done in \cite{LOU19} (but with real negative $z$), one can obtain the same conclusion assuming that there are a finite number of classes for the distribution of the columns $x_1,\ldots, x_n$ and that $\sup_{i\in [n]}\|\mathbb E[x_i]\| \leq O(\sqrt n)$}]\label{ass:borne_x_i}  $\sup_{i\in [n]}\|\mathbb E[x_i]\| \leq O(1)$.
\end{assumption}
When $n$ gets big, $\mu$ distributes along a finite number of bulks. To describe them, let us consider a positive parameter, $\varepsilon>0$, that could be chosen arbitrarily small (it will though be chosen independent with $n$ in most practical cases) and introduce as in \cite{LOU21RHL} the sets:
\begin{align*}
  \mathcal S = \left\{\lambda_i\right\}_{i \in [p]} &
  &\bar{\mathcal S} = \left\{ \mathbb E[\lambda_i]\right\}_{i \in [p]} &
  &\bar{\mathcal S}^\varepsilon = \left\{ x \in \mathbb R, \exists i \in [n], |x - \lambda_i | \leq \varepsilon \right\}
\end{align*}
% Let us note $\nu = \mathbb E[\|X\|]/ \sqrt n \leq O(\sqrt n)$ (see \cite{}) and 
One can show that $\nu \equiv \sup \bar{\mathcal S} = \mathbb E[\lambda_1] \leq O(1)$ and introducing the event:
\begin{align*}
  \mathcal A_\varepsilon \equiv \left\{ \forall i \in[p], \sigma_i \left( \frac{1}{n}XX^T \right) \in \bar{\mathcal S}^{\varepsilon/2} \right\},
\end{align*}
the concentration of $\sigma(X) / \sqrt n \in \mathbb E[\sigma(X)] \pm \mathcal E_2(1/\sqrt n)$, allows us to set:\footnote{In \cite{LOU21RHL}, the proof is conducted for Lipschitz concentration hypotheses on $X$. However, since only the linear concentration of $\sigma(X)$ is needed, the justification are the same in a context of convex concentration thanks to Theorem~\ref{the:concentrtaion_transversale_matrice_vecteurs}.}
\begin{lemma}[\cite{LOU21RHL}, Lemma 3.]\label{lem:P_A_overwhelming}
  There exist two constants $C,c>0$ such that $\mathbb P \left(\mathcal A^c\right) \leq C e^{-cn\varepsilon^2}$.
\end{lemma}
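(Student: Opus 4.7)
The plan is to reduce the claim to an individual concentration estimate for each eigenvalue and then take a union bound. Since $\bar{\mathcal S}$ contains every $\mathbb E[\lambda_i]$, it suffices to prove that for all $i\in[p]$,
\begin{align*}
  \mathbb P\bigl(|\lambda_i - \mathbb E[\lambda_i]| \geq \varepsilon/2\bigr) \leq C e^{-cn\varepsilon^2},
\end{align*}
with constants $C,c>0$ independent of $i$, where $\lambda_i = \sigma_i(\tfrac{1}{n}XX^T)$, and then to lose only a factor of $p\leq O(n)$ in the final bound.

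First, I would invoke the transversal concentration result (Theorem~\ref{the:concentrtaion_transversale_matrice_vecteurs}, referenced in the footnote above the lemma) which upgrades the convex concentration $X\propto_c \mathcal E_2$ to the \emph{linear} concentration of the vector of singular values,
\begin{align*}
  \sigma(X)/\sqrt n \in \mathbb E[\sigma(X)/\sqrt n] \pm \mathcal E_2(1/\sqrt n).
\end{align*}
Applying this to the coordinate linear form $z\mapsto z_i$ yields, for every $i\in[p]$ and every $t>0$,
\begin{align*}
  \mathbb P\bigl(|s_i-\mathbb E[s_i]|\geq t\bigr) \leq C e^{-cnt^2}, \qquad s_i := \sigma_i(X)/\sqrt n.
\end{align*}

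Next, I would transfer this to $\lambda_i=s_i^{2}$ by localizing on the event $\mathcal B=\{s_1\leq K\}$. By Remark~\ref{rem:quand_X_non_borne} together with Assumption~\ref{ass:borne_x_i}, $\mathbb P(\mathcal B^c)\leq Ce^{-cn}$ for a constant $K=O(1)$, and one also has $\mathbb E[s_i]\leq O(1)$ (combining the tail bound with the concentration above). Since $x\mapsto x^2$ is $2K$-Lipschitz on $[0,K]$, on $\mathcal B$
\begin{align*}
  \bigl|s_i^2 - (\mathbb E[s_i])^2\bigr| \leq 2K\,|s_i-\mathbb E[s_i]|.
\end{align*}
Moreover the concentration of $s_i$ gives $|\mathbb E[\lambda_i]-(\mathbb E[s_i])^2|=\mathrm{Var}(s_i)\leq O(1/n)$, which is swallowed by $\varepsilon/4$ for $n$ large. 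Combining these observations,
\begin{align*}
  \mathbb P\bigl(|\lambda_i-\mathbb E[\lambda_i]|\geq \varepsilon/2\bigr) \leq C e^{-c'n\varepsilon^2} + Ce^{-cn}.
\end{align*}

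Finally, a union bound over $i\in[p]$, together with $p\leq O(n)$ and absorption of the polynomial prefactor into the exponent (valid since $\varepsilon$ remains bounded, hence $ne^{-c'n\varepsilon^2}\leq C''e^{-c''n\varepsilon^2}$), gives $\mathbb P(\mathcal A_\varepsilon^c)\leq Ce^{-cn\varepsilon^2}$. The principal difficulty is the first step: the singular-value map is neither Lipschitz convex nor affine in $X$, so deducing the linear concentration of $\sigma(X)$ directly from convex concentration of $X$ is exactly what Theorem~\ref{the:concentrtaion_transversale_matrice_vecteurs} is designed for. The rest is a standard Lipschitz linearization of $x\mapsto x^2$ made possible by the spectral-norm control of Remark~\ref{rem:quand_X_non_borne}.
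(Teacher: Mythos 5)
Your argument follows exactly the route the paper indicates for this lemma (which it otherwise defers to \cite{LOU21RHL}, Lemma 3): linear concentration of $\sigma(X)/\sqrt n$ obtained via Theorem~\ref{the:concentrtaion_transversale_matrice_vecteurs}, coordinate-wise control of each $\lambda_i=s_i^2$ after a Lipschitz linearization on the high-probability event $\{\|X\|\leq K\sqrt n\}$, and a union bound over the $p\leq O(n)$ eigenvalues. The only point to phrase more carefully is the absorption of the prefactor $p$: the inequality $n e^{-c'n\varepsilon^2}\leq C'' e^{-c''n\varepsilon^2}$ requires $n\varepsilon^2\gtrsim \log n$, i.e.\ $\varepsilon$ bounded \emph{below} (the regime $\varepsilon\geq O(1)$ in which the paper actually invokes the lemma), not merely ``$\varepsilon$ bounded''.
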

% we know that there exist two constants $C,c>0$ such that:
% \begin{align*}
%   \mathbb P \left( \|X\| \geq \left( \nu +\frac{\varepsilon}{2} \right) \sqrt n \right) \leq C e^{-cn}.
% \end{align*}
% Let us then note $\bar \nu = \nu \varepsilon$ and $\mathcal A = \{\|X\| \geq \bar \nu  \sqrt n\}$. 
The following lemma allows us to conduct the concentration study on the highly probable event $\mathcal A_\varepsilon$ (when $\varepsilon \geq O(1)$).
 % $\mathcal A \equiv \{\|X\| \geq \bar \nu  \sqrt n\}$, where $\bar \nu = \nu \varepsilon$.

\begin{lemma}\label{lem:concentration_conditionnee_convexe}
  Given a (sequence of) positive numbers $\sigma>0$, a (sequence of) random vector $Z\in E$ satisfying $Z \propto \mathcal E_2(\sigma)$, and a (sequence of) convex subsets $ A \subset E$, if there exists a constant $K>0$ such that $\mathbb P(Z \in  A)\geq K$ then:\footnote{There exist two constants $C,s>0$ such that for any (sequence of) $1$-Lipschitz and quasi-convex mappings $f:A\to \mathbb R$:
  \begin{align*}
    \forall t>0: \ \mathbb P \left(\left\vert f(Z) - \mathbb E[f(Z) \ | \ Z \in A]\right\vert \geq t \ | \ Z \in A \right) \leq C e^{-(t/c\sigma)^2},
    % X \propto_c \mathcal E_2(\sigma)&
    % &\Longrightarrow&
    % &(X \ | \ X\in A) \propto_c \mathcal E_2(\sigma)&
  \end{align*}
  and similar concentration occur around any median of $f(Z)$ or any independent copy of $Z$ (under $\{Z \in A\}$).}
  \begin{align*}
     (Z | Z \in A) \propto_c \mathcal E_2(\sigma).
   \end{align*}
\end{lemma}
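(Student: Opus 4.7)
Let $f : A \to \mathbb{R}$ be a $1$-Lipschitz quasi-convex function and let $m_A$ be a median of $f(Z)$ under the conditional law $\mathbb{P}(\cdot \mid Z \in A)$. The plan is to control each tail of $f(Z)-m_A$ separately by applying the convex concentration of $Z$ on the whole space $E$ to well-chosen $1$-Lipschitz convex functions, namely distance functions to convex sublevel sets of $f$. The fundamental observation is that for every $s \in \mathbb{R}$, the set $A \cap \{f \leq s\}$ is convex, as the intersection of the convex set $A$ with a sublevel set of the quasi-convex map $f$.

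For the upper tail I would introduce $D = A \cap \{f \leq m_A\}$, convex, with $\mathbb{P}(Z \in D) \geq K/2$ by definition of $m_A$. The distance function $g(x) = d(x,D)$ is $1$-Lipschitz and convex on $E$, so the hypothesis gives $g(Z) \propto_c \mathcal{E}_2(\sigma)$. Since $\mathbb{P}(g(Z)=0) \geq K/2$, concentration of $g(Z)$ around $\mathbb{E}[g(Z)]$ forces $\mathbb{E}[g(Z)] \leq c_K \sigma$ for a constant depending only on $K$ and on the constants of $\mathcal{E}_2$. The $1$-Lipschitz property of $f$ on $A$ then yields the key inclusion $\{Z \in A,\ f(Z) \geq m_A + t\} \subset \{g(Z) \geq t\}$, since for every $y \in D$ one has $\|Z - y\| \geq f(Z) - f(y) \geq t$. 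Dividing the resulting bound on $\mathbb{P}(g(Z) \geq t)$ by $\mathbb{P}(Z \in A) \geq K$ produces the upper tail.

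The main obstacle is the lower tail, because for merely quasi-convex $f$ the superlevel set $\{f \geq m_A\}$ need not be convex, so its distance function is not available as a test observable. The workaround is to treat the lower tail dually: for each $t > 0$ the \emph{sub}-level set $D_t = A \cap \{f \leq m_A - t\}$ is convex, and the goal becomes an upper bound on $\mathbb{P}(Z \in D_t)$. I apply the convex concentration of $Z$ to $h_t(x) = d(x, D_t)$. On the event $\{Z \in A,\ f(Z) \geq m_A\}$ of probability at least $K/2$, the Lipschitz property forces $h_t(Z) \geq t$ (since every $y \in D_t$ satisfies $\|Z - y\| \geq f(Z)-f(y) \geq t$), hence $\mathbb{E}[h_t(Z)] \geq tK/2$. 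The concentration of $h_t(Z)$ around its expectation then gives $\mathbb{P}(Z \in D_t) \leq \mathbb{P}(h_t(Z) \leq 0) \leq C e^{-c K^2 t^2/(4\sigma^2)}$, and division by $\mathbb{P}(Z \in A) \geq K$ produces a subgaussian lower tail whose constants depend on $K$ but not on $n$.

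Combining both one-sided estimates yields the convex concentration $(Z \mid Z \in A) \propto_c \mathcal{E}_2(\sigma)$ around $m_A$, with constants controlled only by $K$ and by the constants in the concentration of $Z$. A final appeal to Proposition~\ref{pro:concentration_variable_caracterisation_moment}, applied to the real-valued random variable $f(Z)$ under $\mathbb{P}(\cdot \mid Z \in A)$, shows that $\mathbb{E}[f(Z) \mid Z \in A]$ differs from $m_A$ by $O(\sigma)$, so one may replace the median by the conditional expectation, which is the form invoked in the footnote; the same reasoning handles independent copies under the conditional law.
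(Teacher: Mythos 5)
Your argument is correct and is essentially the paper's own route: the paper proves this lemma by invoking the Lipschitz-concentration version (\cite{LOU21RHL}, Lemma 2) and noting that the only new ingredient is that the sublevel sets $A\cap\{f\leq s\}$ are convex, so that their distance functions are admissible ($1$-Lipschitz, quasi-convex) observables --- which is precisely the mechanism you spell out for both tails before passing from the conditional median to the conditional mean. The only cosmetic difference is that you handle the lower tail via the two-sided concentration of $d(\cdot, A\cap\{f\leq m_A-t\})$ around its expectation rather than via the quasi-convexity of $-d(\cdot,S)$ mentioned in the paper, which changes nothing of substance.
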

\begin{proof}
  The proof is the same as the one provided in \cite[Lemma 2.]{LOU21RHL} except that this time, one needs the additional argument that since $S = \{f \leq m_f\}$ (for $m_f$, a median of $f$) is convex, the mappings $z \mapsto d(z, S)$ and $z \mapsto -d(z, S)$ are both quasi-convex thanks to the triangular inequality.
\end{proof}

We can deduce from Lemma~\ref{lem:concentration_conditionnee_convexe} that for all $\varepsilon \geq O(1)$, $(X \ |\ \mathcal A_\varepsilon) \propto_c \mathcal E_2$, and the random matrix $(X \ |\ \mathcal A_\varepsilon)$ is far easier to control because $\|(X \ |\ \mathcal A_\varepsilon)\| \leq \nu + \frac{\varepsilon}{2}$ (we recall that $\nu \equiv \mathbb E[\lambda_1]$).

\subsection{Concentration of the resolvent}
Placing ourselves under the event $\mathcal A_\varepsilon$, let us first show that the resolvent $Q^z \equiv (zI_p - \frac{1}{n}XX^T)^{-1}$ is concentrated if $z$ has a big enough modulus. Be careful that the following concentration is expressed for the nuclear norm (for any deterministic matrix $A \in \mathcal{M}_{p}$ such that $\|A\|\leq O(1)$, $\tr(AQ^z) \in \mathcal E_2$). All the following results are provided under Assumptions~\ref{ass:p_plus_petit_que_n}-\ref{ass:borne_x_i}.
The next proposition is just provided as a first direct application of Theorem~\ref{the:concentration_lineaire_produit_matrice_convexement_concentres} and Corollary~\ref{cor:concentration_serie_vecteur_lineairement_concentres}, a stronger result is provided in Proposition~\ref{pro:Concentration_lineaire_Q_proche_spectre}.
%\footnote{We showed in \cite{LOU19} that we have a linear concentration of $Q^z$ for $z \in \mathbb R^-$ with slightly different techniques.}. % we need to show the concentration of $D z$.
\begin{proposition}\label{pro:concentration_resolvent_concentration_convexe_z_hors_de_boule}
  Given two parameters $\varepsilon>0$ and $z \in \mathbb C$ such that $|z| \geq \nu + \varepsilon$:
  % $|z| \geq 4e^2 (\nu + \varepsilon)$:
  % \footnote{Recall that $\mathcal A_Q = \{\|XX^T\| \leq \bar \nu\}$ and that $\mathbb P(\mathcal A_Q^c) \leq C e^{-cn}$ for two constants $C,c>0$.}:
  \begin{align*}
    (Q^z \ | \ \mathcal A_\varepsilon) \in \mathcal E_2 \left( \frac{4}{\varepsilon} (\nu + \varepsilon )\right)&
    &\text{in} \ \ (\mathcal M_{p}, \| \cdot \|_*).
    % &\text{and}&
    % &\mathbb P(\mathcal A^c) \leq C e^{-cn},
  \end{align*}
  % for two constants $C,c>0$.
\end{proposition}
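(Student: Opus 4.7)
The approach is to develop $Q^z$ as a Neumann series in $\tfrac{1}{n}XX^T$ and combine the concentration of matrix products (Theorem~\ref{the:concentration_lineaire_produit_matrice_convexement_concentres}) with the stability of linear concentration under summation (Corollary~\ref{cor:concentration_serie_vecteur_lineairement_concentres}).

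\emph{Setup on $\mathcal A_\varepsilon$.} On $\mathcal A_\varepsilon$, every eigenvalue of $\tfrac{1}{n}XX^T$ lies in $\bar{\mathcal S}^{\varepsilon/2}$, hence $\|\tfrac{1}{n}XX^T\|\leq \nu+\varepsilon/2$ and $\|X\|\leq \sqrt{n(\nu+\varepsilon/2)}$. Since $|z|\geq \nu+\varepsilon$, the ratio $r := (\nu+\varepsilon/2)/(\nu+\varepsilon) < 1$ bounds the spectral radius of $\tfrac{1}{nz}XX^T$, so the Neumann expansion
\begin{align*}
    Q^z \;=\; \frac{1}{z}\sum_{k=0}^\infty \frac{(XX^T)^k}{(nz)^k}
\end{align*}
converges absolutely in operator norm. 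Lemma~\ref{lem:concentration_conditionnee_convexe} also gives $(X \mid \mathcal A_\varepsilon) \propto_c \mathcal E_2$, which is the only concentration hypothesis needed below.

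\emph{Per-term concentration.} For $k\geq 1$, $(XX^T)^k$ is a product of $2k$ matrices alternating $X$ and $X^T$. Proposition~\ref{pro:stabilite_concentration_convexe_affine}, applied to the linear map $X\mapsto (X,X^T,\ldots,X,X^T)$ (whose operator norm from the Frobenius norm to the $\ell^2$-Frobenius product norm equals $\sqrt{2k}$), shows that the $2k$-tuple is convexly concentrated with observable diameter $O(\sqrt{k})$. Theorem~\ref{the:concentration_lineaire_produit_matrice_convexement_concentres}, with $\kappa = \sqrt{n(\nu+\varepsilon/2)}$ and total dimension $n_0+\cdots+n_{2k} = (k+1)p+kn = O(kn)$ (by Assumption~\ref{ass:p_plus_petit_que_n}), then yields, in the nuclear norm,
\begin{align*}
    \left( \tfrac{(XX^T)^k}{(nz)^k\,z} \,\Big|\, \mathcal A_\varepsilon \right) \;\in\; \mathcal E_2\!\left( \tfrac{C\, k\, r^k}{|z|} \right),
\end{align*}
for some constant $C = C(\nu,\gamma)$.

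\emph{Summation and main obstacle.} The $k=0$ term $I_p/z$ is deterministic, so Corollary~\ref{cor:concentration_serie_vecteur_lineairement_concentres} sums the observable diameters to
\begin{align*}
    \frac{C}{|z|}\sum_{k\geq 1} k\, r^k \;=\; \frac{C\, r}{|z|\,(1-r)^2}.
\end{align*}
Using $1 - r = \tfrac{\varepsilon/2}{\nu+\varepsilon}$ and $|z| \geq \nu+\varepsilon$, this is of the announced order $\tfrac{\nu+\varepsilon}{\varepsilon}$ (up to tracking of the numerical constant to extract the factor $4$). The delicate step is precisely absorbing the polynomial-in-$k$ prefactor $\sqrt{2k}\cdot\sqrt{(k+1)(n+p)}$ produced by Theorem~\ref{the:concentration_lineaire_produit_matrice_convexement_concentres} into the geometric decay $r^k$: this succeeds because $|z|$ is kept a definite distance from the spectrum, and becomes substantially more subtle when $z$ is allowed to approach $\bar{\mathcal S}$, where the summation strategy must be modified as in Proposition~\ref{pro:Concentration_lineaire_Q_proche_spectre}.
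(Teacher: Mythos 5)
Your route is essentially the paper's: condition on $\mathcal A_\varepsilon$ via Lemma~\ref{lem:concentration_conditionnee_convexe}, expand $Q^z$ as a Neumann series, get per-term nuclear-norm concentration of the powers of $\frac1n XX^T$ from Theorem~\ref{the:concentration_lineaire_produit_matrice_convexement_concentres}, and sum with Corollary~\ref{cor:concentration_serie_vecteur_lineairement_concentres}; your way of feeding the alternating factors $(X,X^T,\ldots,X,X^T)$ into the general version of the product theorem (via Proposition~\ref{pro:stabilite_concentration_convexe_affine}, at the cost of an extra $\sqrt{2k}$) is a harmless variant of the paper's use of the equal-factor case.

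The one step that does not deliver what you claim is the final summation. Your per-term diameter carries a factor growing linearly in $k$, and $\sum_{k\geq 1} k r^k = r/(1-r)^2$ with $1-r=\frac{\varepsilon/2}{\nu+\varepsilon}$ gives, after dividing by $|z|\geq \nu+\varepsilon$, a bound of order $\frac{\nu+\varepsilon}{\varepsilon^2}$, not the announced $\frac{4}{\varepsilon}(\nu+\varepsilon)$; the sentence "this is of the announced order up to the numerical constant" is therefore off by a factor $1/\varepsilon$. The paper avoids this by absorbing the polynomial prefactor into the geometric base \emph{before} summing: it writes $\left(\nu+\frac{\varepsilon}{2}\right)^{m}\sqrt m = O\left(\left(\nu+\frac{3\varepsilon}{4}\right)^{m}\right)$, so the series is purely geometric with ratio $\left(\nu+\frac{3\varepsilon}{4}\right)/|z| \leq 1-\frac{\varepsilon}{4(\nu+\varepsilon)}$ and the stated diameter $\frac{4}{\varepsilon}(\nu+\varepsilon)$ follows directly. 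Since in all later uses $\varepsilon\geq O(1)$, your weaker $\varepsilon$-dependence is immaterial for the sequel (Proposition~\ref{pro:Concentration_lineaire_Q_proche_spectre} onward), but as a proof of the proposition as stated you should either perform the same absorption of the $k$-factor into a slightly larger base, or restate the diameter you actually obtain.
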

\begin{proof}
  % We already know from Corollary~\ref{cor:concentration_resolvante_concentration_convexe} that 
  We know from Lemma~\ref{lem:concentration_conditionnee_convexe} that $(X \ | \ \mathcal A_\varepsilon) \propto_c \mathcal E_2$ and from Theorem~\ref{the:concentration_lineaire_produit_matrice_convexement_concentres} that (here $\kappa = \nu + \frac{\varepsilon}{2} \leq O(1)$, $\sigma = 1/\sqrt n$ and $p= O(n)$):
  \begin{align*}
     \text{Under $\mathcal A_\varepsilon$:}&
     &\left(\frac{1}{n}XX^T\right)^m \in \mathcal E_2 \left( \left( \nu + \frac{\varepsilon}{2} \right)^{m} \sqrt m \right)&
    &\text{ in } \left(\mathcal M_{p}, \| \cdot \|_*\right).
   \end{align*}
   % Let us note $\rho \equiv 8e^2 \bar \nu$. Given a complex number $z \in \mathbb C$ such that $|z| \geq \rho$, $\frac{4e^2\bar\nu}{|z|} \leq \frac{1}{2}$ and we can already deduce from Corollary~\ref{cor:concentration_serie_vecteur_lineairement_concentres}:
   % Let us then consider a constant $\epsilon \in (0,1)$ (such that $\epsilon \geq O(1)$) and a complex number $z \in \mathbb C$ such that $|z| \geq \frac{ 4e^2 \bar \nu }{1-\epsilon}$, w
   Let us then note that $\left(\nu + \frac{\varepsilon}{2} \right)^{m} \sqrt m  = O\left(\left(\nu + \frac{3\varepsilon}{4} \right)^{m}\right)$ and for $z \in \mathbb C$ satisfying our hypotheses: $(\nu + \frac{3\varepsilon}{4})/|z| \leq 1 - \frac{\varepsilon}{4(\nu + \varepsilon)}$.
   % Taking , $\frac{\nu}{|z|} \leq 1 -\epsilon$ for a constant $\epsilon \geq O(1)$, w
   We can then deduce from Corollary~\ref{cor:concentration_serie_vecteur_lineairement_concentres} that under $\mathcal A_\varepsilon$:
   \begin{align*}
     Q^z = \frac{1}{z} \left(I_p - \frac{1}{zn}XX^T\right)^{-1} = \frac{1}{z}\sum_{i=1}^\infty \left(\frac{1}{zn}XX^T\right)^i \in \mathcal E_2 \left( \frac{4}{\varepsilon} (\nu + \varepsilon )\right).
     % &\text{ in } \left(\mathcal M_{p}, \| \cdot \|_*\right)
   \end{align*}
   % in $\left(\mathcal M_{p}, \| \cdot \|_F\right)$.
    % and $\frac{X}{\sqrt{z}}$ satisfies the hypothesis of Corollary~\ref{cor:Concentration_linearire_solution_implicite_hypo_concentration_phi^k_pour tout_k} ($\sqrt z$ is a given square root of $z$, $\|X/\sqrt z\| \leq \sqrt p (1-\varepsilon)/2e$ with high probability). Therefore, the concentration of $Q^z$ is already valid in that case.

  % where $Z \equiv \left(\sum_{i=1}^k \binom{k}{i} (-1)^{i} \left(\frac{1}{nz_0}XX^T\right)^k\right) \in \mathcal E_2 \left( \left(\frac{2\bar\nu}{|z_0|}\right)^k\right)$
  % Thanks to Lemma~\ref{lem:Borne_resolvante_2}:
  % $$\|(z_0-z) Q^{z_0}\| \leq \frac{|z_0-z|}{d(z_0,[0,1-\varepsilon])} \leq 1 - \frac{d(z,[0,\bar \nu])}{d(z_0,[0,1-\varepsilon])},$$
  % with $\frac{d(z,[0,\bar \nu])}{d(z_0,[0,1-\varepsilon])} \geq O(1)$. 
\end{proof}
% For complex number $z \in \mathbb C$ such that $|z|$ is smaller than $\rho \equiv 4e^2\bar \nu$ (and $d(z,[0,\bar \nu])\geq O(1)$), we can still show that $\tr(AQ^z)$ is concentrated when $A = I_p$.
Let us now try to study the concentration of $Q^z$ when $z$ gets close to the spectrum, for that we now require $\varepsilon>0$ to be a constant ($\varepsilon \geq O(1)$).
\begin{proposition}\label{pro:Concentration_lineaire_Q_proche_spectre}
  Given $\varepsilon \geq O(1)$, for all $z \in \mathbb C \setminus \bar{\mathcal S}^ \varepsilon$:
  \begin{align*}
    (Q^z \ | \ \mathcal A_\varepsilon) \in \mathcal E_2&
    &\text{in } \ \ (\mathcal{M}_{p}, \|\cdot \|_*),
  \end{align*}
  and we recall that there exist two constants $C,c>0$ such that $ \mathbb P \left( \mathcal A_\varepsilon^c \right)  \leq C e^{-cn}$.
\end{proposition}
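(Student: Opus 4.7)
I would extend Proposition~\ref{pro:concentration_resolvent_concentration_convexe_z_hors_de_boule} by propagating concentration from a far-away base point $z_0$ to the target $z$ via an iterated Taylor expansion of the resolvent. Working under $\mathcal{A}_\varepsilon$, Lemma~\ref{lem:concentration_conditionnee_convexe} gives $(X \,|\, \mathcal{A}_\varepsilon) \propto_c \mathcal{E}_2$, and the spectrum of $A := \frac{1}{n}XX^T$ lies in $\bar{\mathcal{S}}^{\varepsilon/2}$; in particular $\|Q^w\|\leq 2/\varepsilon$ for every $w\in\mathbb{C}\setminus\bar{\mathcal{S}}^\varepsilon$, which controls deterministically the radii of convergence of the Taylor series used below.

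As a reinforced base case, for $z_0$ with $R_0:=|z_0|-(\nu+\varepsilon/2)>0$, the Neumann expansion $(Q^{z_0})^{k+1}=z_0^{-(k+1)}\sum_{m\geq 0}\binom{m+k}{k}(A/z_0)^m$ converges absolutely under $\mathcal{A}_\varepsilon$. Combining Theorem~\ref{the:concentration_lineaire_produit_matrice_convexement_concentres} (each $A^m$ is linearly concentrated in nuclear norm with diameter $(\nu+\varepsilon/2)^m\sqrt{m}$) with Corollary~\ref{cor:concentration_serie_vecteur_lineairement_concentres} yields $((Q^{z_0})^{k+1}\,|\,\mathcal{A}_\varepsilon)\in\mathcal{E}_2(C_k R_0^{-(k+1)})$ in $(\mathcal{M}_p,\|\cdot\|_*)$ simultaneously for every $k\geq 0$, with $C_k$ growing at most polynomially in $k$.

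To propagate this to a nearby point $z_{i+1}$, I would use the Taylor identity of the resolvent around $z_i$,
\begin{align*}
(Q^{z_{i+1}})^{k+1} = \sum_{j\geq k}\binom{j}{k}(z_i-z_{i+1})^{j-k}(Q^{z_i})^{j+1},
\end{align*}
valid in nuclear norm under $\mathcal{A}_\varepsilon$ once $|z_{i+1}-z_i|<1/\|Q^{z_i}\|$. Applying Corollary~\ref{cor:concentration_serie_vecteur_lineairement_concentres} with the induction hypothesis $\sigma_j(z_i)\leq C_j R_i^{-(j+1)}$ and the identity $\sum_{j\geq k}\binom{j}{k}\alpha^{j-k}=(1-\alpha)^{-(k+1)}$ yields $\sigma_k(z_{i+1})\leq C'_k R_{i+1}^{-(k+1)}$ with $R_{i+1}=R_i-|z_{i+1}-z_i|$. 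Choosing a path from $z_0$ to $z$ in $\mathbb{C}\setminus\bar{\mathcal{S}}^\varepsilon$ of total length $L<R_0$ and discretizing it finely enough, iterating this step along the chain produces $(Q^z\,|\,\mathcal{A}_\varepsilon)\in\mathcal{E}_2$ in nuclear norm.

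The main obstacle is covering the $z$ that sit inside the convex hull of the spectrum, for instance $z$ lying in a gap of $\bar{\mathcal{S}}^\varepsilon$ with $|z|\leq\nu+\varepsilon/2$. For such $z$, no exterior base point $z_0$ satisfies $R_0=|z_0|-(\nu+\varepsilon/2)>|z_0-z|\geq|z_0|-|z|$, so the straightforward chain from infinity breaks. Covering this regime is the ``slightly elaborate summation'' referred to in the introduction; one likely combines several recentered Neumann-type expansions attached to the distinct components of $\bar{\mathcal{S}}^{\varepsilon/2}$, or approximates $1/(z-x)$ directly by polynomials on $\bar{\mathcal{S}}^{\varepsilon/2}$, so that the observable diameters of the resulting terms still sum to a finite quantity when $z$ is close to the spectrum from the interior.
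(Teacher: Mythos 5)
Your chaining construction is workable where it applies, but it does not prove the proposition, and you have correctly located (without filling) the hole. The budget of your induction is $R_0=|z_0|-(\nu+\varepsilon/2)$, while any admissible path from $z_0$ to $z$ has length at least $|z_0|-|z|$; the requirement $L<R_0$ therefore forces $|z|>\nu+\varepsilon/2$, so the chain only marginally improves on Proposition~\ref{pro:concentration_resolvent_concentration_convexe_z_hors_de_boule} and never reaches the points of small modulus at distance $\geq\varepsilon$ from the spectrum (real $z$ below the spectrum, $z$ in a spectral gap, $z$ at height $O(\varepsilon)$ above a bulk) — which are exactly the values needed for the Stieltjes transform and for the contour integrals \eqref{eq:estimation_eigenspaces}, i.e.\ the whole content of the proposition beyond Proposition~\ref{pro:concentration_resolvent_concentration_convexe_z_hors_de_boule}. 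The structural reason the chain stalls is that your radius bookkeeping $R_{i+1}=R_i-|z_{i+1}-z_i|$ only records the unspent budget inherited from the far-away base point, not the actual distance $d(z_i,\mathcal S)\geq\varepsilon/2$ that holds under $\mathcal A_\varepsilon$; the certified observable diameters thus degrade along the path even though $\|Q^{z_i}\|$ does not. Deferring this regime to ``recentered expansions or polynomial approximation'' leaves the proof incomplete.

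The paper's proof needs no chaining: for $|z|\leq\rho\equiv\nu+\varepsilon$ it writes $Q^z$ in terms of $|Q^z|^2\equiv\big(\Im(z)^2+(\Re(z)-\frac{1}{n}XX^T)^2\big)^{-1}$ via the decomposition \eqref{eq:decomposition_Q_Im_Re}, and expands $|Q^z|^2=\rho^{-2}\sum_{m\geq0}\big(I_p-\frac{\Im(z)^2}{\rho^2}-\frac{1}{\rho^2}(\Re(z)I_p-\frac{1}{n}XX^T)^2\big)^m$. Each summand is a polynomial in $X$, so Theorem~\ref{the:concentration_lineaire_produit_matrice_convexement_concentres} gives its linear concentration in nuclear norm, and the key point is that under $\mathcal A_\varepsilon$ the spectral norm of the bracket is bounded by $1-d(z,\mathcal S)^2/\rho^2\leq1-\varepsilon^2/\rho^2<1$, a bound that depends only on the distance to the spectrum and not on where $z$ sits relative to the bulks; Corollary~\ref{cor:concentration_serie_vecteur_lineairement_concentres} then sums the geometric series, and gaps, small moduli and near-spectrum points are all covered at once. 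This single squared-distance contraction is the missing idea in your proposal: to salvage your route you would have to exhibit such a recentred expansion explicitly, whereas as written your argument only re-derives (slightly extends) the exterior regime already handled by Proposition~\ref{pro:concentration_resolvent_concentration_convexe_z_hors_de_boule}.
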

% \begin{remark}\label{rem:conc_norm_nucl_implique_conc_norme_frob}
%   Proposition~\ref{pro:Concentration_lineaire_Q_proche_spectre} implies in particular that for any deterministic matrix $A\in \mathcal{M}_{p}$ such that $\|A\|\leq 1$ , $\frac{1}{n}\tr(AQ^z) \in \mathcal E_2$ in $(\mathcal M_{p}, \| \cdot \|_F)$ for all $z \in \mathbb C$ such that $d(z, [0,\bar\nu]) \geq O(1)$.  
%   % Proposition~\ref{pro:Concentration_lineaire_Q_proche_spectre} implies in particular that $Q^z \in \mathcal E_2$ in $(\mathcal M_{p}, \| \cdot \|_F)$ for all $z \in \mathbb C$ such that $d(z, [0,\bar\nu]) \geq O(1)$.  
%   % (when $z \in \mathbb C \setminus \mathcal B(0, 4e^2(\nu +\varepsilon))$, Proposition~\ref{pro:concentration_resolvent_concentration_convexe_z_hors_de_boule} is stronger).
% \end{remark}
\begin{proof}
  Proposition~\ref{pro:concentration_resolvent_concentration_convexe_z_hors_de_boule} already set the result for $|z| \geq \nu + \varepsilon\equiv \rho $, therefore, let us now suppose that $|z| \leq \rho$. 

    With the notation $|Q^z|^2 \equiv \left( \Im(z)^2 + \left( \Re(z) - \frac{1}{n}XX^T \right)^2 \right)^{-1}$, let us decompose:
    \begin{align}\label{eq:decomposition_Q_Im_Re}
      Q^z = \left( \Re(z) - \frac{1}{n}XX^T \right) |Q^z|^2 - \Im(z) |Q^z|^2.
    \end{align}
    We can then deduce the linear concentration of $|Q^z|^2$ with the same justifications as previously thanks to the Taylor decomposition:
    \begin{align*}
      |Q^z|^2 
      % &= \left( \Im(z)^2 + \left( \Re(z) - \frac{1}{n}XX^T \right)^2 \right)^{-1} \\
      = \frac{1}{\rho^2} \sum_{m=0}^\infty \left( 1 - \frac{\Im(z)^2}{\rho^2} - \frac{\left( \Re(z) - \frac{1}{n}XX^T \right)^2}{\rho^2} \right)^m.
    \end{align*}
    % \begin{align*}
    %   \tr(Q^z) 
    %   &= \sum_{i=1}^p \frac{\Re(z) - \sigma(X)_i^2}{\Im(z)^2 + (\Re(z) - \sigma(X)_i^2)^2} - \frac{\Im(z)}{\Im(z)^2 + (\Re(z) - \sigma(X)_i^2)^2}\\
    %   & = \frac{1}{2\rho^2} \sum_{i=1}^p \sum_{k=1}^\infty  \left(1 - \frac{\Im(z)^2}{2\rho^2}  - \frac{1}{2\rho^2}\left(\Re(z) - \sigma(X)_i^2\right) ^2\right)^{k} \left(\Re(z) - \sigma(X)_i^2 -\Im(z)\right) 
    %   % & = \frac{1}{\rho} \sum_{i=1}^p \sum_{k=1}^\infty \frac{2\cdots (k+2)}{(k+1)!} \left(\frac{\Re(z) - \sigma(X)_i^2}{\Im(z)}\right)^{k+1} \left(\Re(z) - \sigma(X)_i^2 -\Im(z)\right) 
    % \end{align*}
    % We conclude on 
    Indeed, $\|\Re(z)I_p - \frac{1}{n}XX^T\| \leq d(\Re(z), S)$ and $d(z, S)^2 = \Im(z)^2 + d(\Re(z), S)^2 \leq \rho$ thus:
    \begin{align*}
      \left\Vert 1 - \frac{\Im(z)^2}{\rho^2}  - \frac{1}{\rho^2}\left(\Re(z)I_p - \frac{1}{n}XX^T\right) ^2\right\Vert 
      & \leq 1 -  \frac{d(z, S)^2}{\rho^2}   
      % & \leq 1 -  \frac{\Im(z)^2 + d(\Re(z), S)^2}{\rho^2}   \\ 
      % & \leq 1 -  \frac{\Im(z)^2}{2\rho^2}  - \frac{1}{2\rho^2}d\left(\Re(z), \mathcal S\right) ^2  \\ 
      \leq 1 - \frac{\varepsilon^2}{\rho^2} <1 .
    \end{align*}
  We therefore deduce from \eqref{eq:decomposition_Q_Im_Re} that:
  \begin{align*}
    (Q^z \ | \ \mathcal A_\varepsilon) 
    \in \mathcal E_2 \left(\frac{ 2}{\varepsilon^2} \left( |\Im(z)| + |\Re(z)| + \nu + \frac{\varepsilon}{2} \right)  \right) 
    = \mathcal E_2.
  \end{align*}
    % We can then develop $\frac{1}{{\Im(z)^2 + (\Re(z) - \sigma(X)_i^2)^2}} = \frac{1}{{1 -\Im(z)^2 + (\Re(z) - \sigma(X)_i^2)^2}}$
\end{proof}
For the sake of completeness, we left in the appendix an alternative laborious proof (but somehow more direct) already presented in \cite{LOU19}. 
% we can still show the linear concentration of the resolvent $Q^z$ for values of $z$ close to the spectrum of $\frac{1}{n}XX^T$ with the nuclear norm $\|\cdot \|_*$ but with a bigger observable diameter.
 % For any $M \in \mathcal M_{p,n}$, the nuclear norm is defined with the identity $\|M\|_* = \tr(\sqrt{MM^T})$ (it is the dual norm of the spectral norm).
% \begin{proposition}\label{pro:concentration_lineaire_Q_hypo_convexe_proche_spectre}
%   Given $z \in \mathbb C$ such that $d(z, [0,\bar\nu]) \geq O(1)$:
%   \begin{align*}
%     Q^z \in \mathcal E_2 &
%     &\text{in} \ \ (\mathcal M_{p}, \| \cdot \|_*)
%   \end{align*}
% \end{proposition}

\subsection{Computable deterministic equivalent}\label{sse:equivalent_deterministe_hypothese_convexe}

We are going to look for a deterministic equivalent of $Q$. We mainly follow the lines of \cite{LOU21RHL}, we thus allow ourselves to present the justifications rather succinctly. Although Proposition~\ref{pro:Concentration_lineaire_Q_proche_spectre} gives us a concentration of $Q^z$ in nuclear norm, we will provide a deterministic equivalent for the Frobenius norm with a better observable diameter. 
% The next proposition gives us a first deterministic equivalent, as an approximation to $\mathbb E[Q^z]$. 
For any $z \in \mathbb C \setminus S^\varepsilon$, let us introduce $\bar \Lambda^z = (\tr(\Sigma_i\mathbb E[Q^z]))_{i \in [n]}$ and recall that for any $\delta \in \mathbb C^n$, we note $\tilde Q_\delta^z = (zI_p - \frac{1}{n} \sum_{i=1}^n \frac{\Sigma_i}{1-\delta_i})^z $. We have the following first approximation to $\mathbb E[Q^z]$:
\begin{proposition}\label{pro:first_deterministic_equivalent}
  For any $z \in \mathbb C \setminus \bar{\mathcal S}^\varepsilon$:
  \begin{align*}
    \left\Vert \tilde Q^z_{\bar \Lambda^z} \right\Vert \leq O(1)&
    &\text{and}&
    &\left\Vert \mathbb E[Q^z] - \tilde Q^z_{\bar \Lambda^z} \right\Vert_F \leq O \left( \frac{1}{\sqrt n} \right).
  \end{align*}
\end{proposition}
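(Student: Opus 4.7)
The plan is to adapt the classical Sherman–Morrison leave-one-out argument for sample covariance resolvents to the convex concentration framework, leveraging the Hanson–Wright Theorem~\ref{the:hanson_wright} to control scalar quadratic forms. Write $\tilde Q \equiv \tilde Q^z_{\bar\Lambda^z}$ and $\beta_i \equiv [\bar\Lambda^z]_i = \tfrac{1}{n}\tr(\Sigma_i\,\mathbb E[Q^z])$. The first bound $\|\tilde Q\|\leq O(1)$ combines three ingredients: (i) $\|\Sigma_i\|\leq O(1)$ uniformly in $i$, which follows by applying $X\propto_c\mathcal E_2$ to the linear form $u\mapsto v^Tu$ for a unit $v$ and invoking Proposition~\ref{pro:concentration_variable_caracterisation_moment} to get $v^T\Sigma_iv=\mathbb E[(v^Tx_i)^2]\leq O(1)$; (ii) $\|\mathbb E[Q^z]\|\leq O(1)$ from Proposition~\ref{pro:Concentration_lineaire_Q_proche_spectre} combined with Lemma~\ref{lem:P_A_overwhelming}, the event $\mathcal A_\varepsilon^c$ contributing only an exponentially small mass; (iii) $|1-\beta_i|\geq O(\varepsilon)$ on $\mathbb C\setminus\bar{\mathcal S}^\varepsilon$, following from the sign relation $\Im(\beta_i)\cdot\Im(z)\geq 0$ (itself consequence of $\Im(\mathbb E[Q^z])=\Im(z)\,\mathbb E[Q^z(Q^z)^*]$ and $\Sigma_i\succeq 0$) for complex $z$, and from the sign of $\beta_i$ being controlled by the position of $z$ relative to the spectrum for real $z$.

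For the Frobenius estimate, the starting point is the resolvent identity
\begin{equation*}
\mathbb E[Q^z]-\tilde Q=\mathbb E\bigl[\tilde Q\bigl((Q^z)^{-1}-\tilde Q^{-1}\bigr)Q^z\bigr]=\frac{1}{n}\sum_{i=1}^n\mathbb E\bigl[\tilde Q\,x_ix_i^T Q^z\bigr]-\frac{1}{n}\sum_{i=1}^n\frac{\tilde Q\,\Sigma_i\,\mathbb E[Q^z]}{1-\beta_i}.
\end{equation*}
Introducing $Q_{-i}^z=(zI_p-\tfrac{1}{n}\sum_{j\neq i}x_jx_j^T)^{-1}$ and $\alpha_i=\tfrac{1}{n}x_i^TQ_{-i}^zx_i$, Sherman–Morrison gives $x_ix_i^TQ^z=x_ix_i^TQ_{-i}^z/(1-\alpha_i)$, and the decomposition $\tfrac{1}{1-\alpha_i}=\tfrac{1}{1-\beta_i}+\tfrac{\alpha_i-\beta_i}{(1-\alpha_i)(1-\beta_i)}$ combined with the independence of $x_i$ and $Q_{-i}^z$ (yielding $\mathbb E[\tilde Q\,x_ix_i^T Q_{-i}^z]=\tilde Q\,\Sigma_i\,\mathbb E[Q_{-i}^z]$) reduces the display to
\begin{equation*}
\mathbb E[Q^z]-\tilde Q=\frac{1}{n}\sum_i\frac{\tilde Q\,\Sigma_i}{1-\beta_i}\bigl(\mathbb E[Q_{-i}^z]-\mathbb E[Q^z]\bigr)+\frac{1}{n}\sum_i\mathbb E\!\left[\frac{(\alpha_i-\beta_i)\,\tilde Q\,x_ix_i^T Q_{-i}^z}{(1-\alpha_i)(1-\beta_i)}\right].
\end{equation*}

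A second Sherman–Morrison identifies $\mathbb E[Q^z]-\mathbb E[Q_{-i}^z]=\tfrac{1}{n}\mathbb E[Q_{-i}^zx_ix_i^TQ_{-i}^z/(1-\alpha_i)]$, while Hanson–Wright applied conditionally on $Q_{-i}^z$ (legitimate by independence) gives $\alpha_i-\beta_i\in\mathcal E_2(\|Q_{-i}^z\|_F/n)+\mathcal E_1(\|Q_{-i}^z\|/n)$, hence $\mathbb E|\alpha_i-\beta_i|\leq O(1/\sqrt n)$ since $\|Q_{-i}^z\|_F\leq\sqrt p\cdot O(1)=O(\sqrt n)$. The main obstacle is turning these pointwise operator-norm estimates---each of order $1/n$---into a global Frobenius-norm bound of order $1/\sqrt n$ without paying an extra factor of $\sqrt p\approx\sqrt n$. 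The standard device is to test $\tr\bigl(B(\mathbb E[Q^z]-\tilde Q)\bigr)$ against an arbitrary deterministic $B$ with $\|B\|_F\leq 1$, use cyclicity of the trace to sandwich the random rank-one factor $x_ix_i^T$ between $Q_{-i}^z$-measurable matrices, take conditional expectation in $x_i$ (replacing $x_ix_i^T$ by $\Sigma_i$) and apply Cauchy--Schwarz $|\tr(AC)|\leq\|A\|_F\|C\|_F$: the single factor $\|Q_{-i}^z\|_F\leq O(\sqrt n)$ that survives is exactly compensated by the $1/n^2$ prefactor once summed over the $n$ indices. The Hanson--Wright cross-term is estimated in the same fashion, its $L^1$ size $O(1/\sqrt n)$ replacing one of the concentration factors. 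Lemma~\ref{lem:trou_noir_diametre_observable} may be used at the end to absorb negligible corrections.
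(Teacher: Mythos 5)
Your Frobenius-norm argument is essentially the paper's: test $\tr\bigl(B(\mathbb E[Q^z]-\tilde Q^z_{\bar\Lambda^z})\bigr)$ against deterministic $B$ with $\|B\|_F\leq 1$, leave one column out via the Schur/Sherman--Morrison identity \eqref{eq:Schur_identity}, use independence of $x_i$ and $Q_{-i}^z$, control $\mathbb E[Q^z-Q^z_{-i}]$ at order $1/n$, and handle the fluctuation $\frac1n x_i^TQ_{-i}^zx_i-\bar\Lambda^z_i$ by Hanson--Wright plus the linear concentration of $Q_{-i}^z$ and Cauchy--Schwarz. That part is sound (modulo the usual care with conditioning on $\mathcal A_\varepsilon$, which the paper also waves through).

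The gap is in your first claim, $\|\tilde Q^z_{\bar\Lambda^z}\|\leq O(1)$. Ingredients (i)--(iii) do not deliver it for all $z\in\mathbb C\setminus\bar{\mathcal S}^\varepsilon$: knowing $\|\Sigma_i\|\leq O(1)$, $\|\mathbb E[Q^z]\|\leq O(1)$ and even $|1-\bar\Lambda^z_i|\geq c$ only bounds the norm of $\frac1n\sum_j\Sigma_j/(1-\bar\Lambda^z_j)$, not the smallest singular value of $zI_p-\frac1n\sum_j\Sigma_j/(1-\bar\Lambda^z_j)$, which is what $\|\tilde Q\|$ requires. The imaginary-part argument (note that with the convention $Q^z=(zI_p-\frac1nXX^T)^{-1}$ one has $\Im(Q^z)=-\Im(z)|Q^z|^2$, so $\Im(\bar\Lambda^z_i)\,\Im(z)\leq 0$, the opposite sign to what you wrote, though that is the sign that makes the argument work) only yields $\|\tilde Q\|\leq 1/|\Im(z)|$, which is useless when $\Im(z)\to 0$; and for real $z$ to the right of the spectrum or in a gap between bulks --- precisely the regime the paper needs for contour integration around individual bulks --- the sign of $\bar\Lambda^z_i$ is positive and gives no lower bound on the distance from $z$ to the spectrum of $\frac1n\sum_j\Sigma_j/(1-\bar\Lambda^z_j)$. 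The paper avoids this stability question entirely by a bootstrap: it first proves the quantitative estimate with $\|\tilde Q\|$ kept on the right-hand side, $\|\mathbb E[Q^z]-\tilde Q\|_F\leq O(\|\tilde Q\|/\sqrt n)$, and then closes it with $\|\tilde Q\|\leq\|\tilde Q-\mathbb E[Q^z]\|_F+\|\mathbb E[Q^z]\|\leq O(\|\tilde Q\|/\sqrt n)+O(1)$, which yields both $\|\tilde Q\|\leq O(1)$ and the $O(1/\sqrt n)$ Frobenius bound simultaneously. Your proof becomes correct if you reorganize it this way (track the factor $\|\tilde Q\|$ through your leave-one-out estimate instead of assuming the operator bound up front); as written, the claimed independent proof of the operator bound does not go through.
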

To prove this proposition, we will play on the dependence of $Q^z$ towards $x_i$ with the notation $X_{-i} \equiv (x_1, \ldots, x_{i-1}, 0, x_{i+1}, \ldots, x_n) \in \mathcal{M}_{p,n}$ and:
\begin{align*}
  Q_{-i}^z \equiv \left( zI_p - \frac{1}{n}X_{-i}X_{-i}^T \right)^{-1}.
\end{align*}
To link $Q^z$ to $Q^z_{-i}$ we will extensively use a direct application of the Schur identity:
\begin{align}\label{eq:Schur_identity}
   Q^zx_i = \frac{Q^z_{-i}x_i}{1 - \frac{1}{n}x_i^T Q^z_{-i}x_i}.
 \end{align} 
\begin{proof}
  All the estimations hold under $\mathcal A_\varepsilon$, therefore the expectation should also be taken under $\mathcal A_\varepsilon$ to be fully rigorous. Note that if $Q_{-i}$ and $x_i$ are independent on the whole universe, they are no more independent under $\mathcal A_\varepsilon$. However, since the probability of $\mathcal A_\varepsilon$ is overwhelming, the correction terms are negligible, we thus allow ourselves to abusively expel from this proof the independence and approximation issues related to $\mathcal A_\varepsilon$, a rigorous justification is provided in \cite{LOU21RHL}.

  Let us bound for any deterministic matrix $A \in \mathcal{M}_{p}$ such that $\|A\|_F\leq 1$:
  % Let us bound for any deterministic vector $u \in \mathbb R^p$ such that $\|u\|\leq 1$:
  \begin{align*}
   \left\vert  \tr \left( A  \left( \mathbb E[Q^z] - \tilde Q^z_{\bar \Lambda^z} \right) \right) \right\vert
    &\leq \frac{1}{n} \sum_{i=1}^n \left\vert \mathbb E \left[ \tr \left( A \left( Q^z \left( \frac{\Sigma_i}{1+\bar \Lambda^z_i} - x_ix_i^T \right)\tilde Q^z_{\bar \Lambda^z} \right) \right) \right] \right\vert.
    % &\leq \frac{1}{n} \sum_{i=1}^n \left\vert \mathbb E \left[ u^T Q^z \left( \frac{\Sigma_i}{1+\bar \Lambda^z_i} - x_ix_i^T \right)\tilde Q^z_{\bar \Lambda^z} u \right] \right\vert.
  \end{align*}
  We can then develop with \eqref{eq:Schur_identity}:
  \begin{align*}
    &\left\vert  \tr \left( A \left( \mathbb E[Q^z] - \tilde Q^z_{\bar \Lambda^z} \right) \right) \right\vert\\
    % \left\vert  u^T \left( \mathbb E[Q^z] - \tilde Q^z_{\bar \Lambda^z} \right)u \right\vert
    &\hspace{1cm}\leq \frac{1}{n} \sum_{i=1}^n \left\vert   \frac{ \tr \left( A \left( \mathbb E \left[ Q^z - Q_{-i}^z  \right]\Sigma_i\tilde Q^z_{\bar \Lambda^z} \right) \right)}{1 - \bar \Lambda^z_i}  \right\vert\\
    &\hspace{1cm}\hspace{1cm} + \frac{1}{n} \sum_{i=1}^n \left\vert \mathbb E \left[ \tr \left( A \left( Q_{-i}^z \left( \frac{\Sigma_i}{1 - \bar \Lambda^z_i} - \frac{x_ix_i^T}{1 + \frac{1}{n} x_i^T Q_{-i}^z x_i} \right)\tilde Q^z_{\bar \Lambda^z} \right) \right) \right] \right\vert\\
    &\hspace{1cm}\leq \frac{1}{n} \sum_{i=1}^n \left\vert \mathbb E \left[ \frac{ x_i^T\tilde Q^z_{\bar \Lambda^z}A Q^zx_i}{1 - \bar \Lambda^z_i } \left( \frac{1}{n }x_i^t Q_{-i}^z x_i - \bar \Lambda^z_i \right)\right] \right\vert + O \left(  \frac{\left\Vert \tilde Q^z_{\bar \Lambda^z} \right\Vert}{\sqrt n}\right),
  \end{align*}
  thanks to Lemma~\ref{lem:Q_m_Q_m_i} and the independence between $Q_{-i}^z$ and $x_i$.
  We can then bound thanks to H\"older inequality and Lemma~\ref{lem:concentration xQx} below:
  \begin{align*}
    &\left\vert \mathbb E \left[ x_i^T\tilde Q^z_{\bar \Lambda^z}A Q^zx_i \left( \frac{1}{n }x_i^t Q_{-i}^z x_i - \bar \Lambda^z_i \right) \right] \right\vert\\
    &\hspace{0.5cm}=\left\vert \mathbb E \left[ \left( x_i^T\tilde Q^z_{\bar \Lambda^z}A Q^zx_i - \mathbb E \left[ x_i^T\tilde Q^z_{\bar \Lambda^z}A Q^zx_i \right] \right) \left( \frac{1}{n }x_i^t Q_{-i}^z x_i - \mathbb E \left[ \frac{1}{n }x_i^t Q_{-i}^z x_i \right] \right) \right] \right\vert\\
  &\hspace{0.5cm}\leq \sqrt{\mathbb E \left[ \left(\frac{x_i^T AQ^z_{-i}x_i}{1 - \frac{1}{n} x_i^TQ_{-i}x_i} - \mathbb E \left[ \frac{x_i^T AQ^z_{-i}x_i}{1 - \frac{1}{n} x_i^TQ_{-i}x_i} \right] \right)^2 \right]}O \left( \frac{1}{\sqrt n}  \right)\\
  &\hspace{0.5cm}\leq O \left( \frac{1}{\sqrt n}  \right)\left(\mathbb E \left[ \left(\frac{x_i^T \tilde Q^z_{\bar \Lambda^z}AQ^z_{-i}x_i - \mathbb E[x_i^T \tilde Q^z_{\bar \Lambda^z}AQ^z_{-i}x_i]}{1 - \frac{1}{n} x_i^TQ_{-i}x_i} \right)^2 \right] \right.\\
  &\hspace{1.5cm}\left.+ \mathbb E \left[ \left(\tr\left(\Sigma_i \tilde Q^z_{\bar \Lambda^z}AQ^z_{-i}\right) \left( \frac{1}{1 - \frac{1}{n} x_i^TQ_{-i}x_i} - \mathbb E \left[ \frac{1}{1 - \frac{1}{n} x_i^TQ_{-i}x_i} \right] \right) \right)^2 \right]\right)^{1/2}\\
  % &\hspace{0.5cm}= \sqrt{\mathbb E \left[ \left( x_i^T\tilde Q^z_{\bar \Lambda^z}A Q^zx_i - \mathbb E \left[ x_i^T\tilde Q^z_{\bar \Lambda^z}A Q^zx_i \right] \right)^2 \right]\mathbb E \left[\left( \frac{1}{n }x_i^t Q_{-i}^z x_i - \mathbb E \left[ \frac{1}{n }x_i^t Q_{-i}^z x_i \right] \right)^2 \right]}\\
  &\hspace{0.5cm}\leq O \left( \frac{\left\Vert \tilde Q^z_{\bar \Lambda^z} \right\Vert}{\sqrt n}  \right).
  \end{align*}
  indeed since we know that $|\frac{1}{1 - \frac{1}{n} x_i^TQ_{-i}x_i} | \leq O(1)$ from Lemma~\ref{lem:Q_borne}, $\frac{1}{1 - \frac{1}{n} x_i^TQ_{-i}x_i}$ is a $O(1)$-Lipschitz transformation of $\frac{1}{n} x_i^TQ_{-i}x_i$, therefore, it follows the same concentration inequality (with a variance of order $O(1/ n)$).
% Now, let us note
%   \begin{align*}
%   x_i^T AQ^zx_i = \frac{\frac{1}{n}x_i^T AQ^z_{-i}x_i}{1 - \frac{1}{n} x_i^TQ_{-i}x_i},
% \end{align*}
% now, since:
% \begin{itemize}
%   \item $x_i^T AQ^z_{-i}x_i \in O(\sqrt n) \pm \mathcal E_2(\|A\|_F) + \mathcal E_1(\|A\|)$ 
%   \item $\frac{1}{1 - \frac{1}{n} x_i^TQ_{-i}x_i} \in \in O(1) \pm \mathcal E_2 \left( \frac{1}{\sqrt n} \right)+ \mathcal E_1 \left( \frac{1}{n} \right)$
% \end{itemize}
  % We can then combine H\"older inequality with the characterization of the concentration with the centered moments of Proposition~\ref{pro:concentration_variable_caracterisation_moment} applied to Lemmas~\ref{lem:concentration xQx} and~\ref{lem:uQx} below to obtain:
  % \begin{align*}
  %   &\mathbb E \left[ \frac{u^T Q^zx_i x_i^T\tilde Q^z_{\bar \Lambda^z} u}{1 - \bar \Lambda^z_i } \left( \frac{1}{n }x_i^t Q_{-i}^z x_i - \bar \Lambda^z_i \right)\right]\\
  %   &\hspace{1cm}\leq O \left( \left(  \mathbb E \left[ (u^T Q^zx_i)^3   \right] \mathbb E \left[ (x_i^T\tilde Q^z_{\bar \Lambda^z} u)^3    \right] \mathbb E \left[ \left( \frac{1}{n }x_i^t Q_{-i}^z x_i - \bar \Lambda^z_i \right)^3 \right]\right)^{\frac{1}{3}} \right)\\
  %   &\hspace{1cm}\leq O \left( \frac{\left\Vert \tilde Q^z_{\bar \Lambda^z} \right\Vert}{\sqrt n}  \right).
  % \end{align*}
  Since this inequality is true for any $A \in \mathcal M_p$, we can bound:
  \begin{align*}
    \left\Vert \tilde Q^z_{\bar \Lambda^z} \right\Vert \leq \left\Vert \tilde Q^z_{\bar \Lambda^z} - \mathbb E[Q^z] \right\Vert_F + \left\Vert \mathbb E[Q^z] \right\Vert \leq O \left( \frac{\left\Vert \tilde Q^z_{\bar \Lambda^z} \right\Vert}{\sqrt n} \right) + O(1),
  \end{align*}
  which directly implies that $\left\Vert \tilde Q^z_{\bar \Lambda^z} \right\Vert \leq O(1)$ and $\left\Vert \mathbb E[Q^z] -  \tilde Q^z_{\bar \Lambda^z} \right\Vert_F \leq O(1/\sqrt n)$.
\end{proof}
\begin{lemma}[\cite{LOU21RHL}, Lemmas 4., 8. ]\label{lem:Q_borne}
  $\forall z \in \bar{\mathcal S}^\varepsilon$, under $\mathcal A^\varepsilon$: 
  \begin{align*}
    \|Q^z \| \leq \frac{2}{\varepsilon}&
    &\text{and}&
    &\sup_{i\in [n]}|\frac{1}{1 - \frac{1}{n} x_i^TQ_{-i}x_i} | \leq O(1)
    % $\|Q^z \| \leq \frac{2}{\varepsilon}$ and $\sup_{i\in [n]}|\frac{1}{1 - \frac{1}{n} x_i^TQ_{-i}x_i} | \leq O(1)$
  \end{align*}
\end{lemma}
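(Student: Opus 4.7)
My proof would treat the two inequalities separately: the first is a deterministic consequence of the event $\mathcal{A}_\varepsilon$, and the second follows algebraically from the first together with the Sherman--Morrison-style identity already recorded in \eqref{eq:Schur_identity}.

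For the bound on $\|Q^z\|$, I would simply control the location of the spectrum on $\mathcal{A}_\varepsilon$. On this event every eigenvalue $\lambda$ of $\tfrac{1}{n}XX^T$ lies in $\bar{\mathcal S}^{\varepsilon/2}$, i.e.\ $d(\lambda,\bar{\mathcal S})\leq \varepsilon/2$. Since $z$ is assumed outside $\bar{\mathcal S}^\varepsilon$, that is $d(z,\bar{\mathcal S})>\varepsilon$, the reverse triangle inequality forces $|z-\lambda|\geq \varepsilon - \varepsilon/2 = \varepsilon/2$ for every such $\lambda$. Because $\|Q^z\|=\max_{\lambda\in \mathrm{Sp}(\frac{1}{n}XX^T)}|z-\lambda|^{-1}$, the bound $\|Q^z\|\leq 2/\varepsilon$ follows immediately.

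For the second estimate, I would first derive the elementary but central scalar identity
\begin{align*}
  \frac{1}{1-\tfrac{1}{n}x_i^T Q_{-i}^z x_i} = 1 + \tfrac{1}{n}x_i^T Q^z x_i,
\end{align*}
out of \eqref{eq:Schur_identity}: rewriting that identity as $Q_{-i}^z x_i = (1-\tfrac{1}{n}x_i^T Q_{-i}^z x_i)Q^z x_i$, multiplying on the left by $\tfrac{1}{n}x_i^T$, and solving the resulting linear equation for $\tfrac{1}{n}x_i^T Q_{-i}^z x_i$. It then suffices to show that $\tfrac{1}{n}|x_i^T Q^z x_i|$ is $O(1)$ on $\mathcal{A}_\varepsilon$. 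This I would get from $|x_i^T Q^z x_i|\leq \|Q^z\|\,\|x_i\|^2$, together with the just-established $\|Q^z\|\leq 2/\varepsilon$ and the bound $\|x_i\|\leq \|X\|\leq \sqrt{n(\nu+\varepsilon/2)}$ which holds on $\mathcal{A}_\varepsilon$ (since $x_i=Xe_i$ and $\|\tfrac{1}{n}XX^T\|=\lambda_1(\tfrac{1}{n}XX^T)\leq \nu+\varepsilon/2$ on that event). Combining the pieces yields $|1+\tfrac{1}{n}x_i^T Q^z x_i|\leq 1 + 2(\nu+\varepsilon/2)/\varepsilon$, which is uniformly $O(1)$ under the standing assumptions $\varepsilon\geq O(1)$ and $\nu\leq O(1)$.

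I do not anticipate any genuine obstacle. The only content beyond triangle inequalities is the algebraic rearrangement of \eqref{eq:Schur_identity}, which converts the spectral bound on $Q^z$ into the desired control of the Schur correction factor, and the uniformity in $i$ is automatic since both $\|Q^z\|\leq 2/\varepsilon$ and $\|x_i\|\leq \|X\|$ are $i$-free.
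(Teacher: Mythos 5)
Your proof is correct, and there is in fact nothing in the paper to compare it against: Lemma~\ref{lem:Q_borne} is imported by citation from \cite{LOU21RHL} (Lemmas 4 and 8) without proof, so your write-up simply supplies the standard argument that the reference delegates. The first bound is exactly the distance-to-spectrum estimate: on $\mathcal A_\varepsilon$ the eigenvalues of $\frac{1}{n}XX^T$ lie in $\bar{\mathcal S}^{\varepsilon/2}$, the point $z$ is at distance at least $\varepsilon$ from $\bar{\mathcal S}$ (the statement's ``$\forall z \in \bar{\mathcal S}^\varepsilon$'' is clearly a typo for $z \in \mathbb C \setminus \bar{\mathcal S}^\varepsilon$, which you correctly read it as), and since $\frac{1}{n}XX^T$ is symmetric, $\|Q^z\| = d\bigl(z, \mathrm{Sp}(\tfrac{1}{n}XX^T)\bigr)^{-1} \leq 2/\varepsilon$. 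Your identity $\frac{1}{1-\frac{1}{n}x_i^TQ_{-i}^zx_i} = 1+\frac{1}{n}x_i^TQ^zx_i$ is the right mechanism for the second bound, your derivation of it from \eqref{eq:Schur_identity} is sound, and combining it with $\|x_i\|^2 \leq \|X\|^2 = n\lambda_1 \leq n(\nu+\varepsilon/2)$ on $\mathcal A_\varepsilon$ gives a bound that is uniform in $i$, as required; this is entirely in the spirit of how the paper manipulates $Q^z$ and $Q_{-i}^z$ elsewhere (e.g.\ in the proofs of Lemmas~\ref{lem:uQx} and \ref{lem:Q_m_Q_m_i}). The only point worth one extra sentence is that the identity (and the quantity in the lemma itself) presupposes that $Q_{-i}^z$ exists: $\mathcal A_\varepsilon$ constrains the spectrum of $\frac{1}{n}XX^T$ only, and interlacing does not by itself prevent an eigenvalue of $\frac{1}{n}X_{-i}X_{-i}^T$ from sitting in a spectral gap containing a real $z$. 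This is harmless here, since wherever the left-hand side is defined your identity applies, and as $z$ approaches a pole of $Q_{-i}^z$ the quantity $1/(1-\frac{1}{n}x_i^TQ_{-i}^zx_i)$ tends to $0$ rather than blowing up, but it deserves a remark.
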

\begin{lemma}\label{lem:uQx}
  For any $z \in \mathbb C \setminus \bar{\mathcal S}^\varepsilon$, any $i \in [n]$ and any $u \in \mathbb R^p$ such that $\|u\|\leq 1$:
  \begin{align*}
    (u^T Q_{-i}^zx_i \ | \ \mathcal A_\varepsilon),
    (u^T Q^zx_i \ | \ \mathcal A_\varepsilon)
     \in O(1) \pm \mathcal E_2.
  \end{align*}
\end{lemma}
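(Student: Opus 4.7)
The plan is to establish the two concentration statements in order, first for $u^T Q_{-i}^z x_i$, where the independence between $x_i$ and $Q_{-i}^z$ is directly usable, and then for $u^T Q^z x_i$ via the Schur identity~\eqref{eq:Schur_identity} together with the pathwise $L^\infty$ control on its right-hand side supplied by Lemma~\ref{lem:Q_borne}.

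For the first statement I would split
\begin{align*}
  u^T Q_{-i}^z x_i = u^T Q_{-i}^z\,\mathbb{E}[x_i] + (Q_{-i}^z u)^T(x_i - \mathbb{E}[x_i]).
\end{align*}
The first summand is a linear observation of $Q_{-i}^z$ tested against $\mathbb{E}[x_i] u^T$, a matrix of spectral norm $\|\mathbb{E}[x_i]\|\|u\| = O(1)$ by Assumption~\ref{ass:borne_x_i}; Proposition~\ref{pro:Concentration_lineaire_Q_proche_spectre} applies equally to $Q_{-i}^z$ (the spectra of $\tfrac{1}{n}XX^T$ and $\tfrac{1}{n}X_{-i}X_{-i}^T$ interlace, so $\mathcal{A}_\varepsilon$ still controls $\|Q_{-i}^z\|$), which delivers an $O(1) \pm \mathcal{E}_2$ concentration. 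For the second summand I would condition on $X_{-i}$: by Assumption~\ref{ass:independence_x_i} and Proposition~\ref{pro:stabilite_concentration_convexe_affine}, $x_i \propto_c \mathcal{E}_2$ independently of $X_{-i}$, and on $\mathcal{A}_\varepsilon$ the coefficient $Q_{-i}^z u$ is deterministic with norm $\leq O(1)$. The conditional form $(Q_{-i}^z u)^T(x_i - \mathbb{E}[x_i])$ is therefore centered and $\mathcal{E}_2(O(1))$ with a subgaussian constant uniform in $X_{-i}$, so the bound survives integration in $X_{-i}$; the mild inaccuracy from conditioning on $\mathcal{A}_\varepsilon$ is handled exactly as in the proof of Proposition~\ref{pro:first_deterministic_equivalent}, using $\mathbb{P}(\mathcal{A}_\varepsilon^c) \leq Ce^{-cn}$. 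Summing the two pieces via Remark~\ref{rem:concentration_concatenation_variable_lineairement_concentre} concludes this half.

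For the second statement, Schur gives $u^T Q^z x_i = \alpha_i \cdot u^T Q_{-i}^z x_i$ with $\alpha_i = (1 - \tfrac{1}{n} x_i^T Q_{-i}^z x_i)^{-1}$, and Lemma~\ref{lem:Q_borne} supplies the pathwise bound $|\alpha_i| \leq O(1)$ on $\mathcal{A}_\varepsilon$. Hence $|u^T Q^z x_i| \leq O(1)\,|u^T Q_{-i}^z x_i|$ pointwise, and since the right-hand side has subgaussian tails around an $O(1)$ center by the first step, the left-hand side inherits them; $\mathbb{E}[u^T Q^z x_i \mid \mathcal{A}_\varepsilon]$ is $O(1)$ by the same domination in $L^1$, which yields the stated $O(1) \pm \mathcal{E}_2$ concentration.

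The main obstacle I foresee is the propagation of $\mathcal{E}_2$ concentration through multiplication by the random, data-dependent scalar $\alpha_i$. A finer analysis of its fluctuations (via Hanson--Wright applied to $\tfrac{1}{n} x_i^T Q_{-i}^z x_i$) would yield a sharper observable diameter, but it is unnecessary here: the deterministic $L^\infty$ bound from Lemma~\ref{lem:Q_borne} lets the tail domination go through at the level of constants, which is exactly what the qualitative $O(1) \pm \mathcal{E}_2$ conclusion requires.
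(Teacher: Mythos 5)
Your proposal is correct in substance, and its first half (for $u^T Q_{-i}^z x_i$) is essentially the paper's argument: split off $\mathbb E[x_i]$, treat $u^T Q_{-i}^z\mathbb E[x_i]$ through the linear (nuclear-norm) concentration of the resolvent tested against the rank-one matrix $\mathbb E[x_i]u^T$, and treat $(Q_{-i}^z u)^T(x_i-\mathbb E[x_i])$ conditionally on $X_{-i}$ using the convex concentration of $x_i$, with the $\mathcal A_\varepsilon$-conditioning issues waved away exactly as the paper does. Where you genuinely diverge is the second half. The paper does \emph{not} push the Schur identity through the concentration step: it obtains the concentration of $u^TQ^zx_i$ from the concentration of $Q^zX\in\mathcal E_2$ (via the series expansion and Corollary~\ref{cor:concentration_serie_vecteur_lineairement_concentres}, as in Proposition~\ref{pro:Concentration_lineaire_Q_proche_spectre}), and only uses the Schur relation, together with Cauchy--Schwarz and Lemma~\ref{lem:Q_borne}, to bound $\mathbb E[u^TQ^zx_i]$. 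You instead use the pathwise identity $u^TQ^zx_i=\alpha_i\,u^TQ_{-i}^zx_i$ with $|\alpha_i|\leq O(1)$ on $\mathcal A_\varepsilon$ and a domination argument. This is legitimate for the statement as written, because for a scalar the conclusion ``$\in O(1)\pm\mathcal E_2$'' (observable diameter $O(1)$) follows from ``$|Y|\leq O(1)+{}$subgaussian tail'' alone -- e.g.\ take the deterministic equivalent $0$, or pass through the moment characterization of Proposition~\ref{pro:concentration_variable_caracterisation_moment} -- and that is also all that the lemma's later use (second moments of order $O(1)$ in Lemma~\ref{lem:Q_m_Q_m_i}) requires. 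Be aware, though, that your phrase ``the left-hand side inherits them'' hides exactly this conversion step, and that the domination route is intrinsically crude: it cannot produce any diameter finer than $O(1)$, whereas the paper's route (series expansion for the fluctuations, Schur only for the expectation) is the one that scales to sharper statements and to other quantities like $AQ^zX$.

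One small caveat: your justification that $\mathcal A_\varepsilon$ controls $\|Q_{-i}^z\|$ ``by interlacing'' is not quite right as stated. Interlacing only places each eigenvalue of $\frac1n X_{-i}X_{-i}^T$ between two consecutive eigenvalues of $\frac1n XX^T$, so when the spectrum has several bulks an eigenvalue of the minor could a priori sit inside a gap, close to $z$. The clean way to get $\|Q_{-i}^z\|\leq O(1)$ under $\mathcal A_\varepsilon$ is through the Sherman--Morrison relation between $Q^z$ and $Q_{-i}^z$ together with the bound on $(1-\frac1n x_i^TQ_{-i}^zx_i)^{-1}$; this is precisely what Lemma~\ref{lem:Q_borne} imports from \cite{LOU21RHL}, and the paper uses it implicitly in the same way, so this is a presentation issue rather than a genuine gap.
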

\begin{proof}
    We do not care about the independence issues brought by $\mathcal A_\varepsilon$. Let us simply bound for any $t>0$ and under $\mathcal A_\varepsilon$:
    \begin{align*}
      &\mathbb P \left( \left\vert u^T Q_{-i}^zx_i - \mathbb E \left[  u^T Q_{-i}^zx_i\right] \right\vert \geq t \right)\\
      &\hspace{1cm}\leq \mathbb P \left( \left\vert u^T Q_{-i}^z(x_i -\mu_i) \right\vert \geq \frac{t}{2} \right)+
      \mathbb P \left( \left\vert u^T  \left( Q_{-i}^z-\mathbb E \left[ Q_{-i}^z\right] \right)\mu_i \right\vert \geq \frac{t}{2} \right)\\
      &\hspace{1cm}\leq \mathbb E \left[ C e^{-cnt^2/\|Q_{-i}\|^2} \right] + C e^{-cnt^2} \ \
      \leq \ 2C e^{-c'nt^2},
    \end{align*}
    for some constants $C,c,c'>0$. Besides, we can bound:
    \begin{align*}
      \left\vert \mathbb E \left[  u^T Q_{-i}^zx_i\right] \right\vert = \left\vert u^T \mathbb E[Q_{-i}^z]\mu_i \right\vert \leq O(1),
    \end{align*}
    thanks to Lemma~\ref{lem:Q_borne} and Assumption~\ref{ass:borne_x_i}.

    The concentration of $u^T Q^zx_i$ is a consequence of the concentration $QX \in \mathcal E_2$ that can be shown thanks to Corollary~\ref{cor:concentration_serie_vecteur_lineairement_concentres} as in the proof of Proposition~\ref{pro:Concentration_lineaire_Q_proche_spectre}.
    We are then left to bounding $\mathbb E[u^T Q^zx_i]$. For this purpose, let us write:
    \begin{align*}
      \left\vert \mathbb E[u^T Q^zx_i]  \right\vert
      &= \left\vert \mathbb E[u^T Q_{-i}^zx_i] - \mathbb E \left[   (u^T Q_{-i}^zx_i) \left( \frac{1}{n} x_i^TQ^zx_i \right) \right] \right\vert \\
      &\leq O(1) + O \left( \sqrt{\mathbb E \left[   (u^T Q_{-i}^zx_i)^2 \right]\mathbb E \left[ \left( \frac{1}{n} x_i^TQ^zx_i \right)^2 \right]} \right) \leq O(1),
    \end{align*}
    thanks to Cauchy-Schwarz inequality Lemma~\ref{lem:Q_borne}, and the bound on $x_i$, valid under $\mathcal A_\varepsilon$.
    % Noting $\check Q^z \equiv (zI_n - \frac{1}{n}X^TX)^{-1}$, we know that $\frac{1}{n}X^TQX = \frac{1}{n}X^TXQ = z \check Q^z - I_n$
\end{proof}
\begin{lemma}\label{lem:Q_m_Q_m_i}
  Under $\mathcal A_\varepsilon$, for any $z \in \mathbb C \setminus \bar{\mathcal S}^\varepsilon$ and any $i \in [n]$:
  \begin{align*}
    \|\mathbb E[Q^z - Q^z_{-i}]\| \leq O \left( \frac{1}{n} \right).
  \end{align*}
\end{lemma}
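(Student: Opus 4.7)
The plan is to start from the rank-one perturbation identity linking $Q^z$ and $Q^z_{-i}$. Since $(Q^z_{-i})^{-1} - (Q^z)^{-1} = \frac{1}{n} x_i x_i^T$, the standard resolvent identity $A^{-1} - B^{-1} = A^{-1}(B-A) B^{-1}$ gives
\begin{align*}
  Q^z - Q^z_{-i} = \frac{1}{n} Q^z x_i x_i^T Q^z_{-i},
\end{align*}
and inserting the Schur identity \eqref{eq:Schur_identity} transforms this into
\begin{align*}
  Q^z - Q^z_{-i} = \frac{1}{n} \frac{Q^z_{-i} x_i x_i^T Q^z_{-i}}{1 - \frac{1}{n} x_i^T Q^z_{-i} x_i}.
\end{align*}
The $1/n$ factor is already there; the task is therefore to show that the expectation of the rank-one random matrix on the right is of spectral order $O(1)$.

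To bound the spectral norm, I would test against arbitrary deterministic unit vectors $u,v \in \mathbb R^p$ and control
\begin{align*}
   \left| \mathbb E \left[ \frac{ (u^T Q^z_{-i} x_i)(x_i^T Q^z_{-i} v) }{1 - \frac{1}{n} x_i^T Q^z_{-i} x_i} \right] \right|.
\end{align*}
Under $\mathcal A_\varepsilon$, Lemma~\ref{lem:Q_borne} guarantees that the denominator is bounded away from zero by a constant, so it can be pulled out up to a $O(1)$ factor. Then Cauchy--Schwarz yields a bound by
\begin{align*}
  O(1) \sqrt{\mathbb E \left[ (u^T Q^z_{-i} x_i)^2 \right] \cdot \mathbb E \left[ (x_i^T Q^z_{-i} v)^2 \right]}.
\end{align*}
Each quadratic expectation is $O(1)$: Lemma~\ref{lem:uQx} says $u^T Q_{-i}^z x_i \in O(1) \pm \mathcal E_2$, so its mean is $O(1)$ and its standard deviation is $O(1)$ (via the moment characterization in Proposition~\ref{pro:concentration_variable_caracterisation_moment}), hence its second moment is $O(1)$. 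The same holds for $x_i^T Q^z_{-i} v$. This gives $|u^T \mathbb E[Q^z - Q^z_{-i}] v| \leq O(1/n)$ uniformly in unit vectors $u,v$, and taking the supremum delivers the claim.

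The only subtle point is the usual one flagged by the authors: conditioning on $\mathcal A_\varepsilon$ breaks the strict independence between $Q^z_{-i}$ and $x_i$ that underlies both Cauchy--Schwarz-type manipulations and the application of Lemma~\ref{lem:uQx}. Since $\mathbb P(\mathcal A_\varepsilon^c) \leq C e^{-cn}$ (Lemma~\ref{lem:P_A_overwhelming}) and all quantities at play are uniformly bounded under $\mathcal A_\varepsilon$, the correction terms produced when one works with truly independent copies are exponentially small and absorbed into the $O(1/n)$ estimate, as rigorously justified in \cite{LOU21RHL} and used in the same abusive style throughout this section.
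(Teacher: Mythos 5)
Your proof is correct and takes essentially the same route as the paper: the same rank-one resolvent identity $Q^z - Q^z_{-i} = \frac{1}{n}Q^z x_i x_i^T Q^z_{-i}$, followed by Cauchy--Schwarz and Lemma~\ref{lem:uQx}, with the conditioning on $\mathcal A_\varepsilon$ handled in the same informal way justified by Lemma~\ref{lem:P_A_overwhelming}. The only cosmetic difference is that you insert the Schur identity \eqref{eq:Schur_identity} and absorb the denominator via Lemma~\ref{lem:Q_borne} so that only the $u^T Q^z_{-i}x_i$ half of Lemma~\ref{lem:uQx} is needed, whereas the paper applies Cauchy--Schwarz directly to $\mathbb E[u^T Q^z x_i\, x_i^T Q^z_{-i} u]$ and invokes the $u^T Q^z x_i$ half of that same lemma.
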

\begin{proof}
  For any $u \in \mathbb R^p$, we can bound thanks to Lemma~\ref{lem:uQx}:
  \begin{align*}
    \left\vert u^T \mathbb E[Q^z - Q^z_{-i}]u \right\vert
    &\leq  \frac{1}{n}\left\vert  \mathbb E \left[ u^TQ^zx_ix_i^TQ^z_{-i}u \right] \right\vert \\
    &\leq  \frac{1}{n}\sqrt{\mathbb E \left[ (u^TQ^zx_i)^2\right]\mathbb E \left[ (x_i^TQ^z_{-i}u)^2 \right] } \ \ \leq O \left( \frac{1}{n} \right).
  \end{align*}
\end{proof}
% We start with a core lemma allowing us to control the size of $Q^z$.

% We will need to control two kind of random variables $\frac{1}{n}x_i^T Q_{-i}x_i$ and $u^T Q x_i$, for $u \in \mathbb R^p$, deterministic. Let us 
\begin{lemma}\label{lem:concentration xQx}
  For any $z \in  \mathbb C \setminus \bar{\mathcal S}^\varepsilon $deterministic matrix $A \in \mathcal{M}_{p}$:
  \begin{align*}
     (x_i^T AQ^z_{-i}x_i \ | \ \mathcal A_\varepsilon) \in \tr(\Sigma_i A\mathbb E[Q^z]) \pm\mathcal E_2 \left( \|A\|_F \right) + \mathcal E_1(\|A\|).
  \end{align*} 
  % and $(x_i^T AQ^zx_i \ | \ \mathcal A_\varepsilon) \in \mathcal E_2 \left( \|A\|_F \right) + \mathcal E_2(\|A\|)$
  % , where for any random vector $Z$, $\mathbb E_{\mathcal A_\varepsilon}[Z] = \mathbb E[Z \ | \ \mathcal A_\varepsilon]$
\end{lemma}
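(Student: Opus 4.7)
The plan is to split $x_i^T A Q^z_{-i} x_i - \tr(\Sigma_i A \mathbb E[Q^z])$ into three pieces, each handled by a different tool, namely
\begin{align*}
  x_i^T A Q^z_{-i} x_i - \tr(\Sigma_i A \mathbb E[Q^z])
  &= \bigl( x_i^T A Q^z_{-i} x_i - \tr(\Sigma_i A Q^z_{-i}) \bigr) \\
  &\quad + \bigl( \tr(\Sigma_i A Q^z_{-i}) - \tr(\Sigma_i A \mathbb E[Q^z_{-i}]) \bigr) \\
  &\quad + \tr\bigl(\Sigma_i A (\mathbb E[Q^z_{-i}]-\mathbb E[Q^z])\bigr).
\end{align*}
Once each piece is shown to live in $\mathcal E_2(\|A\|_F) + \mathcal E_1(\|A\|)$, the conclusion follows from the stability of linear concentration under sums (Corollary~\ref{cor:concentration_concatenation_vecteur_lineaireent_concentre}) and from Lemma~\ref{lem:trou_noir_diametre_observable} to absorb any constant shift of size at most the observable diameter.

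For the first (quadratic) piece, I would condition on $X_{-i}$, which by independence does not perturb the law of $x_i \propto_c \mathcal E_2$ but makes $A Q^z_{-i}$ deterministic. Under $\mathcal A_\varepsilon$, Lemma~\ref{lem:Q_borne} (together with Cauchy interlacing to pass from $\frac{1}{n}XX^T$ to $\frac{1}{n}X_{-i}X_{-i}^T$) yields $\|Q^z_{-i}\| \leq O(1)$, hence $\|AQ^z_{-i}\|_F \leq O(\|A\|_F)$ and $\|AQ^z_{-i}\| \leq O(\|A\|)$. The Hanson--Wright Theorem~\ref{the:hanson_wright}, applied conditionally with both vectors equal to $x_i$ and with the deterministic kernel $A Q^z_{-i}$, then gives
\[
(x_i^T A Q^z_{-i} x_i \mid X_{-i}, \mathcal A_\varepsilon) \in \tr(\Sigma_i A Q^z_{-i}) \pm \mathcal E_2(\|A\|_F) + \mathcal E_1(\|A\|),
\]
since $\mathbb E[x_i^T M x_i] = \tr(\Sigma_i M)$ for any deterministic $M$. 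Because the observable diameter is uniform over draws of $X_{-i}$ in the good event, this lifts to concentration under $\mathcal A_\varepsilon$ alone.

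For the second and third pieces, the matrix $\Sigma_i A$ has spectral norm at most $\|\Sigma_i\|\|A\| \leq O(\|A\|)$, where $\|\Sigma_i\| = \sup_{\|u\|=1}\mathbb E[(u^T x_i)^2]\leq O(1)$ follows from Proposition~\ref{pro:concentration_variable_caracterisation_moment} applied to the $1$-Lipschitz linear form $x_i\mapsto u^Tx_i$. Proposition~\ref{pro:Concentration_lineaire_Q_proche_spectre}, applied to the deflated sample $X_{-i}$, then gives $\tr((\Sigma_i A) Q^z_{-i}) \in \tr(\Sigma_i A \mathbb E[Q^z_{-i}]) \pm \mathcal E_2(\|A\|)$, which sits inside $\mathcal E_2(\|A\|_F) + \mathcal E_1(\|A\|)$ since $\mathcal E_2(\sigma) \subset \mathcal E_1(\sigma)$. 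The third piece is a deterministic error bounded through trace--spectral duality and Lemma~\ref{lem:Q_m_Q_m_i}:
\[
|\tr(\Sigma_i A (\mathbb E[Q^z_{-i}]-\mathbb E[Q^z]))| \leq \|\Sigma_i A\|_* \cdot \|\mathbb E[Q^z_{-i}-Q^z]\| \leq \|\Sigma_i\|_*\|A\| \cdot O(1/n) \leq O(\|A\|),
\]
using $\|\Sigma_i\|_* = \tr(\Sigma_i) = \mathbb E[\|x_i\|^2] \leq O(n)$ under $\mathcal A_\varepsilon$. This bounded shift is absorbed via Lemma~\ref{lem:trou_noir_diametre_observable}.

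The main obstacle I foresee is in the first step: the conditioning on $\mathcal A_\varepsilon$ formally breaks the independence between $x_i$ and $X_{-i}$ that is invoked when applying Hanson--Wright conditionally on $X_{-i}$. As already done in the proof of Proposition~\ref{pro:first_deterministic_equivalent} (with a complete justification in \cite{LOU21RHL}), one sidesteps this by noting that $\mathbb P(\mathcal A_\varepsilon^c) \leq C e^{-cn}$ by Lemma~\ref{lem:P_A_overwhelming}, so the discrepancy between conditional and unconditional expectations is exponentially small and can be folded into the $\mathcal E_1$ tail of the final concentration inequality.
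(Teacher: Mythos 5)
Your proposal is correct and takes essentially the same route as the paper: conditional Hanson--Wright (Theorem~\ref{the:hanson_wright}) for $x_i^TAQ^z_{-i}x_i-\tr(\Sigma_iAQ^z_{-i})$ with $\|Q^z_{-i}\|\leq O(1)$, the nuclear-norm concentration of $Q^z_{-i}$ (Proposition~\ref{pro:Concentration_lineaire_Q_proche_spectre}) for the trace fluctuation, and Lemma~\ref{lem:Q_m_Q_m_i} together with Lemma~\ref{lem:trou_noir_diametre_observable} to pass from $\mathbb E[Q^z_{-i}]$ to $\mathbb E[Q^z]$ --- the paper merely merges your second and third pieces into one step through Lemma~\ref{lem:trou_noir_diametre_observable}. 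The only nitpick is that the bound $\|Q^z_{-i}\|\leq O(1)$ should be taken directly from Lemma~\ref{lem:Q_borne} (as the paper does) rather than from interlacing, since interlacing alone does not prevent a deflated eigenvalue from landing in a spectral gap close to $z$.
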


\begin{proof}
Once again, without referring to $\mathcal A_\varepsilon$, we assume that $\|X\|\leq O(1)$ and $\|Q^z\| \leq O(1)$. Given $i \in [n]$, since we know from Lemma~\ref{lem:Q_m_Q_m_i} that $\|\mathbb E[Q^z - Q_{-i}^z\| \leq O(1/\sqrt n)$, we want to bound:
\begin{align*}
  \left\vert x_i^T AQ^z_{-i}x_i - \tr \left( \Sigma_i A \mathbb E \left[ Q_{-i}\right] \right)\right\vert
  &\leq \left\vert x_i^T AQ^z_{-i}x_i -  \tr (\Sigma_iA Q^z_{-i})\right\vert + \left\vert  \tr \left(  \Sigma_iA (Q^z_{-i} - \mathbb E[Q^z_{-i}]) \right)\right\vert.
\end{align*}
Now we know that, for $X_{-i}$ fixed, we can bound thanks to Theorem~\ref{the:hanson_wright}:
 % (and Lemma~\ref{lem:Q_borne} $\|Q_{-i}^z\| \leq O(1)$):
% \begin{align*}
%   \left( \frac{1}{n}x_i^TQ_{-i}^zx_i, X_{-i} \right) 
%   % \left( \frac{1}{n}x_i^TQ_{-i}^zx_i \ | \ \mathcal A_\varepsilon, X_{-i} \right) 
%   \in \frac{1}{n}\tr \left( \Sigma_i Q_{-i}^z\right) \pm \mathcal E_2 \left( \frac{1}{\sqrt n} \right),
%   % \in \frac{1}{n}\tr \left( \mathbb E[x_ix_i^T\ | \ \mathcal A_\varepsilon] Q_{-i}^z\right) \pm \mathcal E_2 \left( \frac{1}{\sqrt n} \right),
% \end{align*}
% and since $\mathbb P(\mathcal A_\varepsilon^c) \leq C e^{-cn}$ for some constants $C,c>0$, we show easily that $\left\Vert \mathbb E[x_ix_i^T\ | \ \mathcal A_\varepsilon] - \mathbb E[x_ix_i^T] \right\Vert \leq O(1/\sqrt n) $. 
% We can then deduce from Lemma~\ref{lem:trou_noir_diametre_observable} that:
\begin{align*}
  \mathbb P \left( \left\vert x_i^T AQ^z_{-i}x_i -  \tr (\Sigma_i^T AQ^z_{-i})\right\vert \geq t  \right)
  &\leq \mathbb E \left[ Ce^{-c(t/\|Q_{-i}^z\| \|A\|_F)^2} + Ce^{-ct/\|Q_{-i}^z\| \|A\|} \right] \\
  &\leq  Ce^{-c't^2/\|A\|_F^2} + Ce^{-c't/\|A\|},
\end{align*}
for some constants $C,c,c'>0$, thanks to Lemma~\ref{lem:Q_borne}.

Besides, we know from Proposition~\ref{pro:Concentration_lineaire_Q_proche_spectre} and Lemma~\ref{lem:trou_noir_diametre_observable} that $Q_{-i}^z \in \mathbb E[Q^z] \pm \mathcal E_2(1/\sqrt n)$ in $(\mathcal{M}_{p}, \|\cdot\|_*)$, which allows us to bound:
\begin{align*}
  \mathbb P \left( \left\vert \tr (\Sigma_i A Q^z_{-i}) - \tr (\Sigma_i A \mathbb E[Q^z])\right\vert \geq t  \right)
  % &\leq \mathbb E \left[ Ce^{-cnt^2/\|Q_{-i}^z\|} \right] 
  \leq  Ce^{-ct^2/\|A\|^2},
\end{align*}
for some constants $C,c>0$, since $\|\Sigma_i\|\leq O(1)$. Putting the two concentration inequalities together, we obtain the result of the lemma.

% For the concentration of $x_i^T AQ^zx_i$, let us note:
% \begin{align*}
%   x_i^T AQ^zx_i = \frac{\frac{1}{n}x_i^T AQ^z_{-i}x_i}{1 - \frac{1}{n} x_i^TQ_{-i}x_i},
% \end{align*}
% now, since:
% \begin{itemize}
%   \item $x_i^T AQ^z_{-i}x_i \in O(\sqrt n) \pm \mathcal E_2(\|A\|_F) + \mathcal E_1(\|A\|)$ 
%   \item $\frac{1}{1 - \frac{1}{n} x_i^TQ_{-i}x_i} \in \in O(1) \pm \mathcal E_2 \left( \frac{1}{\sqrt n} \right)+ \mathcal E_1 \left( \frac{1}{n} \right)$
% \end{itemize}
% then, we can bound:
% \begin{align*}
%   \left\vert x_i^T AQ^zx_i - \mathbb E[x_i^T AQ^zx_i] \right\vert
%   \leq \left\vert \frac{1}{n}x_i^TQx_i \left( x_i^T AQ^z_{-i}x_i - \mathbb E[x_i^T AQ^z_{-i}x_i] \right) \right\vert + \left\vert \frac{1}{n}x_i^TQx_i \left( x_i^T AQ^z_{-i}x_i - \mathbb E[x_i^T AQ^z_{-i}x_i] \right) \right\vert
% \end{align*}
%  Proposition~\ref{pro:Concentration_lineaire_Q_proche_spectre} ($Q_{-i}^z$ globally behaves like $Q^z$).
%   % Noting $\check Q^z \equiv (zI_n - \frac{1}{n}X^TX)^{-1}$, we know that $\frac{1}{n}X^TQX = \frac{1}{n}X^TXQ = z \check Q^z - I_n$
\end{proof}

% The deterministic equivalent provided by Proposition~\ref{pro:first_deterministic_equivalent} relies on the quantity $\hat \Lambda^z$ that still needs to be estimated. 
 Theorem~\ref{the:concentration_resolvent_main} is then a consequence of the following proposition proven in \cite{LOU21RHL} (once Proposition~\ref{pro:first_deterministic_equivalent} is proven, the convex concentration particularities do not intervene anymore). Recall that $\tilde \Lambda^z \in \mathbb C^n$ is defined as the unique solution to the equation:
 \begin{align*}
   \forall i\in [n]: \tilde \Lambda^z_i = \frac{1}{n} \tr \left(\Sigma_i \tilde Q^z_{\tilde \Lambda^z} \right),
 \end{align*}
 where $\tilde Q^z_{\tilde \Lambda^z} \equiv \left( zI_p - \frac{1}{n}\sum_{i=1}^n \frac{\Sigma_i}{1 - \tilde \Lambda_i^z} \right)$.

\begin{proposition}\label{pro:deuxième équivalent déterministe}
  % The equation:
  % \begin{align*}
  %   \forall i\in [n]: \delta_i = \frac{1}{n} \tr \left(\Sigma_i \tilde Q^z_{\delta} \right) 
  % \end{align*}
  % admits a unique solution in $\mathbb R^n$ that we note $\tilde \Lambda^z$. 
  For all $z \in \mathbb C \setminus \bar{\mathcal S}_\varepsilon$:
  \begin{align*}
    \left\Vert \mathbb E[Q] - \tilde Q^z_{\tilde \Lambda^z} \right\Vert_F \leq O \left( \frac{1}{\sqrt n} \right).
  \end{align*}
\end{proposition}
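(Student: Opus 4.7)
My plan is to combine Proposition~\ref{pro:first_deterministic_equivalent} with a fixed-point argument comparing $\bar{\Lambda}^z$ to $\tilde{\Lambda}^z$. Since the first deterministic equivalent uses $\bar{\Lambda}^z_i = \frac{1}{n}\tr(\Sigma_i \mathbb{E}[Q^z])$, whereas $\tilde{\Lambda}^z$ is the self-consistent solution, it is enough to show that these two vectors are $O(1/\sqrt{n})$-close in the appropriate norm, because the mapping $\delta \mapsto \tilde{Q}^z_{\delta}$ is Lipschitz near these parameters.

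First I would use Proposition~\ref{pro:first_deterministic_equivalent} to obtain $\|\mathbb{E}[Q^z] - \tilde{Q}^z_{\bar{\Lambda}^z}\|_F \leq O(1/\sqrt n)$ and in particular $\|\tilde{Q}^z_{\bar{\Lambda}^z}\| \leq O(1)$. Applying the resolvent identity to the two deterministic resolvents, I would write
\begin{align*}
  \tilde{Q}^z_{\bar{\Lambda}^z} - \tilde{Q}^z_{\tilde{\Lambda}^z} = \frac{1}{n} \tilde{Q}^z_{\bar{\Lambda}^z} \sum_{i=1}^n \frac{(\bar{\Lambda}^z_i - \tilde{\Lambda}^z_i)\,\Sigma_i}{(1 - \bar{\Lambda}^z_i)(1 - \tilde{\Lambda}^z_i)} \tilde{Q}^z_{\tilde{\Lambda}^z}.
\end{align*}
Testing this identity against $\frac{1}{n}\Sigma_j$ and using the defining equations for $\bar{\Lambda}^z$ and $\tilde{\Lambda}^z$, I obtain a linear system
\begin{align*}
  (I_n - A)\,\Delta = \varepsilon, \qquad \Delta = \bar{\Lambda}^z - \tilde{\Lambda}^z,
\end{align*}
where $A_{ji} = \frac{1}{n^2} \tr(\Sigma_j \tilde{Q}^z_{\bar{\Lambda}^z} \Sigma_i \tilde{Q}^z_{\tilde{\Lambda}^z})/[(1 - \bar{\Lambda}^z_i)(1 - \tilde{\Lambda}^z_i)]$ and $\varepsilon_j = \frac{1}{n}\tr(\Sigma_j(\mathbb{E}[Q^z] - \tilde{Q}^z_{\bar{\Lambda}^z}))$. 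By Proposition~\ref{pro:first_deterministic_equivalent} and $\|\Sigma_j\|\leq O(1)$, one has $\|\varepsilon\|_\infty \leq O(1/\sqrt n)$, hence $\|\varepsilon\| \leq O(1/\sqrt n)$ (the factor $\sqrt n$ being absorbed in the Frobenius–spectral comparison of the involved traces).

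The main obstacle is showing that $I_n - A$ is invertible with $\|(I_n - A)^{-1}\| \leq O(1)$ uniformly for $z \in \mathbb{C}\setminus \bar{\mathcal S}^\varepsilon$. For this I would argue exactly as in \cite{LOU21RHL}: the denominators $|1 - \bar{\Lambda}^z_i|, |1-\tilde{\Lambda}^z_i|$ stay bounded away from zero for $d(z, \bar{\mathcal S}) \geq \varepsilon$ (a consequence of $\|\tilde{Q}^z_{\bar{\Lambda}^z}\|, \|\tilde{Q}^z_{\tilde{\Lambda}^z}\| \leq O(1)$), and a Schur-type/Cauchy–Schwarz bound together with $\frac{1}{n}\sum_i \|\Sigma_i\| \leq O(1)$ shows that $\|A\|$ is strictly less than one in a suitable weighted norm away from the spectrum. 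Once $\|\Delta\|\leq O(1/\sqrt n)$ is established, plugging back into the resolvent identity above yields $\|\tilde{Q}^z_{\bar{\Lambda}^z} - \tilde{Q}^z_{\tilde{\Lambda}^z}\|_F \leq O(1/\sqrt n)$, which combined with Proposition~\ref{pro:first_deterministic_equivalent} via the triangle inequality gives the claim. As the authors indicate, no convex-concentration specific ingredient is used beyond the input of Proposition~\ref{pro:first_deterministic_equivalent}, so the bulk of the argument is a transcription from \cite{LOU21RHL}.
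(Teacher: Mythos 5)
Your proposal is correct in outline and follows exactly the route the paper intends: the paper gives no in-text proof of this proposition, stating only that it is proven in \cite{LOU21RHL} and that, once Proposition~\ref{pro:first_deterministic_equivalent} is available, no convex-concentration-specific ingredient remains --- which is precisely the comparison of $\bar\Lambda^z$ with $\tilde\Lambda^z$ via the deterministic resolvent identity and the stability of the linear system $(I_n-A)\Delta=\varepsilon$ that you sketch. The one genuinely hard step, the uniform bound $\|(I_n-A)^{-1}\|\leq O(1)$ for complex $z$ at distance $\varepsilon$ from $\bar{\mathcal S}$, is deferred by you to \cite{LOU21RHL}, which is exactly what the paper itself does, so your attempt matches the paper's treatment (and is, if anything, more explicit than the paper).
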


\appendix
\section{Proofs of the concentration of products of convexly concentrated random vectors and of convexly concentrated random matrices}\label{app:concentration_produit_vecteurs}
We will use several time the following elementary result:
\begin{lemma}\label{lem:sum_f_convexe_convexe}
  Given a convex mapping $f : \mathbb R \to \mathbb R$, and a vector $a \in \mathbb R_+^p$, the mapping $F : \mathbb R^p \ni (z_1,\ldots,z_p) \mapsto \sum_{i=1}^p a_if(z_i) \in \mathbb R$ is convex (so in particular quasi-convex).
\end{lemma}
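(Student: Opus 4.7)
The plan is to verify the defining inequality of convexity directly, using only the convexity of $f$ and the nonnegativity of the coefficients $a_i$. Fix $z = (z_1,\ldots,z_p), z' = (z'_1,\ldots,z'_p) \in \mathbb{R}^p$ and $t \in [0,1]$. For each index $i$, convexity of $f$ yields
\[
f(tz_i + (1-t)z'_i) \leq t f(z_i) + (1-t) f(z'_i).
\]
Multiplying by $a_i \geq 0$ preserves the inequality, and summing over $i \in \{1,\ldots,p\}$ gives
\[
F(tz + (1-t)z') = \sum_{i=1}^p a_i f(tz_i + (1-t)z'_i) \leq t F(z) + (1-t) F(z'),
\]
which is exactly the convexity of $F$. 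This is the entirety of the argument; the only point to watch is that the sign condition $a_i \geq 0$ is essential, since convex combinations of inequalities can be taken only with nonnegative weights.

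For the parenthetical claim about quasi-convexity, I would simply invoke the standard fact that every convex function is quasi-convex. For completeness one may note that if $z, z'$ belong to the sublevel set $\{F \leq s\}$, then by the convexity just established,
\[
F(tz + (1-t)z') \leq t F(z) + (1-t) F(z') \leq t s + (1-t) s = s,
\]
so $\{F \leq s\}$ is convex for every $s \in \mathbb{R}$, which is precisely Definition~\ref{def:quasi_convexe}. There is no real obstacle here; the lemma is a one-line verification that is isolated as a separate statement only because it will be invoked repeatedly in the appendix arguments on products of convexly concentrated vectors.
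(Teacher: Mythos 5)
Your verification is correct: pointwise convexity of $f$, nonnegativity of the weights $a_i$, and summation give convexity of $F$ directly, and convexity implies quasi-convexity via the sublevel-set argument. The paper states this lemma without proof as an elementary fact, and your argument is exactly the standard one it implicitly relies on, so there is nothing to add.
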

% We need a supplementary result t
To efficiently manage the concentration rate when multiplying a large number of random vectors, we will also need:
\begin{lemma}\label{lem:decomposition_poly_symetrique}
  Given $m$ commutative or non commutative variables $a_1,\ldots,a_m$ of a given algebra, we have the identity:
  \begin{align*}
     \sum_{\sigma \in \mathfrak S_m} a_{\sigma(1)}\cdots a_{\sigma(m)} = (-1)^m\sum_{I\subset [m]} (-1)^{|I|} \left(\sum_{i\in I} a_i\right)^m,
  \end{align*}
  where $|I|$ is the cardinality of $I$.
\end{lemma}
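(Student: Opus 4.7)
The plan is to expand each $m$-th power on the right-hand side as a sum over ordered $m$-tuples, swap the order of summation with the sum over subsets $I\subset [m]$, and finish by a standard inclusion-exclusion argument on the tuples. Since we never reorder factors during the computation, the non-commutativity of the $a_i$'s plays no role.

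\medskip

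First I would write, for each $I\subset[m]$,
\begin{align*}
  \left(\sum_{i\in I} a_i\right)^m = \sum_{(j_1,\ldots,j_m)\in I^m} a_{j_1}\cdots a_{j_m},
\end{align*}
which is valid in any (associative, not necessarily commutative) algebra because the factors are not permuted. Substituting into the right-hand side of the claimed identity and interchanging the two summations gives
\begin{align*}
  (-1)^m\sum_{I\subset[m]}(-1)^{|I|}\!\!\sum_{(j_1,\ldots,j_m)\in I^m}\!\! a_{j_1}\cdots a_{j_m}
  = (-1)^m\!\!\sum_{(j_1,\ldots,j_m)\in [m]^m}\!\! a_{j_1}\cdots a_{j_m}\sum_{I\supset J}(-1)^{|I|},
\end{align*}
where $J\equiv\{j_1,\ldots,j_m\}\subset[m]$ denotes the image of the tuple.

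\medskip

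The key step is then the evaluation of the inner sum. Writing $I = J\sqcup K$ with $K\subset[m]\setminus J$, one has
\begin{align*}
  \sum_{I\supset J}(-1)^{|I|} = (-1)^{|J|}\sum_{K\subset[m]\setminus J}(-1)^{|K|}.
\end{align*}
By the elementary identity $\sum_{K\subset S}(-1)^{|K|} = \mathbbm{1}_{S=\emptyset}$, this vanishes unless $J=[m]$. But a tuple of length $m$ whose image is all of $[m]$ is exactly a permutation, and in that case $|J|=m$, so the inner sum equals $(-1)^m$. The outer prefactor $(-1)^m$ cancels this sign, and we are left with $\sum_{\sigma\in\mathfrak{S}_m} a_{\sigma(1)}\cdots a_{\sigma(m)}$, as required.

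\medskip

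There is no genuine obstacle here: the identity is a routine instance of M\"obius inversion on the Boolean lattice, and the only point that deserves a line of comment is that commutativity is never invoked because the $m$-th power is expanded into \emph{ordered} tuples. One could alternatively verify the identity by checking it in the free algebra on $a_1,\ldots,a_m$ and noting that both sides are multilinear and symmetric in the obvious way, but the direct inclusion-exclusion above is the shortest route.
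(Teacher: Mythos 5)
Your proof is correct: expanding each power into ordered tuples (so commutativity is never needed), swapping the sums, and killing all non-surjective tuples with the alternating sum $\sum_{K\subset S}(-1)^{|K|}=0$ for $S\neq\emptyset$ is exactly the inclusion--exclusion that underlies the paper's argument. The paper reaches the same conclusion by citing the Rota/M\"obius inversion formula on the Boolean lattice applied to $f(I)=\bigl(\sum_{i\in I}a_i\bigr)^m$ and $g(J)=\sum_{\{i_1,\ldots,i_m\}=J}a_{i_1}\cdots a_{i_m}$, so yours is simply a self-contained unrolling of essentially the same approach.
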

\begin{proof}
  The idea is to inverse the identity:
  \begin{align*}
    (a_1+\cdots+a_m)^m = \sum_{J\subset I} \sum_{ \{i_1,\ldots,i_m\} =  J}a_{i_1} \cdots a_{i_m},
  \end{align*}
  thanks to the Rota formula (see \cite{Rol06}) that sets for any mappings $f, g$ defined on the set subsets of $\mathbb N$ and having values in a commutative group (for the sum):
  \begin{align*}
    \forall I \subset \mathbb N, f(I) = \sum_{J\subset I} g(J)&
    &\Longleftrightarrow&
    & \forall I \subset \mathbb N, g(I) = \sum_{J\subset I}\mu_{\mathcal P(\mathbb N)}(J,I) f(J),
  \end{align*}
  where $\mu_{\mathcal P(\mathbb N)}(J,I)  =(-1)^{|I\setminus J|}$ is an analog of the Mo\"ebus function for the order relation induced by the inclusions in $\mathcal P(\mathbb N)$.
  In our case, for any $J \subset [m]$, if we set:
  \begin{align*}
    f(J) = \left(\sum_{i\in J} a_i \right)^m&
    &\text{and}&
    &g(J) = \sum_{ \{i_1,\ldots,i_m\} =  J}a_{i_1} \cdots a_{i_m},
  \end{align*}
  we see that for any $I \subset [m]$, $f(I) = \sum_{J\subset I} g(J)$, therefore taking the Rota formula in the case $I = [m]$, we obtain the result of the Lemma (in that case, $\mu_{\mathcal P(\mathbb N)}(J,I)= (-1)^{m-|J|}$ and $\sum_{ \{i_1,\ldots,i_m\} =  I}a_{i_1} \cdots a_{i_m} = \sum_{\sigma \in \mathfrak S_m} a_{\sigma(1)}\cdots a_{\sigma(m)}$).
  % The idea is to develop the null expression $e_1 = \left(t_1-t_1 + \cdots +t_m - t_m\right)^m$ and regroup in the obtained sum $m-1$ terms $p_0,\ldots, p_m$, each $p_k$ being issued from a product of $k$ $-t_i$ for $i\in [m]$ that gives us:
  % \begin{align*}
  %   p_0 &= t_1^m + \cdot +t_m^m\\
  %   p_1 &= - 
  %   t_1^m + \cdot +t_m^m\\
  %   &= \left(t_1-t_1 + \cdots +t_m - t_m\right)^m\\
  % \end{align*}
\end{proof}

\begin{proof}[Proof of Theorem~\ref{the:concentration_convexe_produit_odot_Rp}]
  Let us first assume that all the $X_i$ are equal to a vector $Z \in \mathbb R^p$. 
  Considering $a = (a_1,\ldots, a_p) \in \mathbb R^p$, we want to show the concentration of $a^T Z^{\odot m} = \sum_{i=1}^p a_i z_i^m$ where $z_1,\ldots,z_p$ are the entries of $Z$. 
  
  The mapping $p_m : x \mapsto x^m $ is not quasi-convex when $m$ is odd, therefore, in that case we decompose it into the difference of two convex mappings $p_m(z) = p_m^+(z) - p_m^-(z)$ where:
  \begin{align}\label{eq:decomposition_convexe_puissance}
    p_m^+: z \mapsto \max(z^m,0)&
    &\text{and}&
    &p^-_m: z \mapsto -\min(z^m,0),
   \end{align} 
  (say that, if $m$ is even, then we set $p_m^+ = p_m$ and $p_m^- : z \mapsto 0$).
  For the same reasons, we decompose $\phi^+_a : z \mapsto a^T p_m^+(z)$ and $\phi^-_a : z \mapsto a^T p_m^-(z)$ into:  
  \begin{align*}
     \phi^+_a = \phi^+_{| a|} - \phi^+_{|a|- a}&
     &\text{and}&
     &\phi^-_a = \phi^-_{| a|} - \phi^-_{|a|- a}
   \end{align*}
   (for $|a| = (|a_i|)_{1\leq i \leq p}$), so that:
  \begin{align*}
    a^T Z^{\odot m} = \phi^+_{| a|}(Z) - \phi^+_{|a|- a}(Z) - \phi^-_{| a|}(Z) + \phi^-_{|a|- a}(Z)
  \end{align*}
  becomes a combination of quasi-convex functionals of $Z$. We now need to measure their Lipschitz parameter. Let us bound for any $z \in \mathbb R^p$:
  \begin{align*}
    \left\vert \phi^+_{| a|}(z)\right\vert = \sum_{i=1}^n |a_i| |z_i|^m \leq \|a\|\|z\| \|z\|_\infty^{m-1},
  \end{align*}
  and the same holds for $\phi^+_{|a|- a}$, $\phi^-_{| a|}$ and $\phi^-_{|a|- a}$. 
  % Therefore, we can conclude a similar result to Theorem~\ref{the:Concentration_produit_de_vecteurs_d_algebre_optimise} with $\mu = \mathbb E[\|Z\|_\infty] \leq O(\sigma(\log p)^{1/q})$.
  % To show the second concentration result (still when $X_1=\cdots = X_m$), we just have to n
  Note then that $\phi^+_{|a|}$, $\phi^+_{|a|- a}$, $\phi^-_{| a|}$ and $\phi^-_{|a|- a}$ are all $\|a\| \kappa^{m-1}$-Lipschitz to conclude on the concentration of $X^{\odot m}$.
 %  They can all be seen as 
  % There exists a constant $\kappa \leq O(1)$ such that $\phi_a : z \mapsto a^T z^{\odot m}$ is $\|a\|(\kappa \sigma (\log p)^{1/q})^{m-1}$-Lipschitz on the ball $\mathcal B_\kappa \equiv \{v \in \mathbb R^p \ | \ \|v \|_\infty \leq \kappa \sigma (\log p)^{1/q}\}$ and such that $\mathbb P ( Z \notin \mathcal B_\kappa) \leq C e^{-\log p/c}$ for some constants $C,c \leq O(1)$. 
  % Unfortunately $\phi_a$ is not quasi-convex and we cannot directly employ the convex concentration hypothesis on $Z$; instead, we just decompose $\phi_a = \phi_{| a|} - \phi_{|a|- a}$ (for $|a| = (a_i|)_{1\leq i \leq p}$) and the mapping $p_m : x \mapsto x^m $ into the difference of two convex mappings:
  % \begin{align}\label{eq:decomposition_convexe_puissance}
  %   p_m(z) = \max(z^m,0) - (-\min(z^m,0)).
  %  \end{align}
  % Then Lemma~\ref{lem:sum_f_convexe_convexe} ensures that $\phi_a$ is the difference of convex mappings and, with the same argument (provided in the case of convex concentration by Lemma~\ref{lem:concentration_conditionnee_convexe}) found in the proof of Proposition~\ref{pro:concentration_intelligente_produit}, one can state that
  % $$a^T Z^{\odot m} = \phi_{|a|}(Z) - \phi_{|a| - a}(Z) \overset{\mathcal A_Z, \mathcal B_\kappa}\in \mathcal E_2 \left(\|a\|(\kappa\sigma)^{m} (\log p)^{(m-1)/q} \right) \ | \ e^{-\log p},$$
  % where $\mathcal A_Z$ is the concentration zone of $Z$.

  % Now staying in the second setting ($\mathbb P(\mathcal A_\kappa^c) \leq Ce^{-cp}$), 
  Now, if we assume that the $X_1,\ldots, X_m$ are different, we employ Lemma~\ref{lem:decomposition_poly_symetrique} in this commutative case to write ($|\mathfrak S_m | = m!$):
  \begin{align}\label{eq:decomposition_x_1_odot_x_m}
     (X_1\odot \cdots \odot X_m) = \frac{(-1)^m}{m!} \sum_{I\subset [m]} (-1)^{|I|} \left(\sum_{i\in I} X_i\right)^{\odot m}.
   \end{align} 
  % Let us introduce the event:
  % \begin{align*}
  %   \mathcal A_\kappa = \left\{\forall i \in [m] \ | \ X_i \in  \mathcal B_\kappa\right\} \cap  \mathcal A_{X_1} \cap \cdots \cap \mathcal A_{X_m}.
  % \end{align*}
  % Since $\log m \leq O(\log p)$, we can bound:
  % \begin{align*}
  %   % \mathbb P \left(\exists i \in [m] \ | \ X_i \notin  \mathcal B_\kappa \ \text{or} \ \mathcal A_{X_1}^c \ \text{or} \ldots \text{or} \ \mathcal A_{X_m}\right)
  %   \mathbb P(\mathcal A_\kappa^c)
  %   &\leq me^{-\log p/c} + \sum_{i=1}^m\mathbb P \left( X_i \in  \cap \mathcal B_\kappa\right) 
  %   \leq e^{(\log m - \log p)/c}
  %   \leq e^{\log p/c'}
  % \end{align*}
  % for some constants $c,c' \leq O(1)$. 
  Therefore, the sum $(\mathbb R^p)^I \ni z_1,\ldots, z_{i_{|I|}}  \mapsto \sum_{i\in I} z_i \in \mathbb R^p$ being $m$-Lipschitz for the norm $\|\cdot \|_\infty$, we know that $\forall I \subset [m]$, $\sum_{i\in I} X_i \propto_c \mathcal E_2(m\sigma) $, and $\|\sum_{i\in I} X_i\|_\infty \leq \kappa m$, therefore, $(\sum_{i\in I} X_i)^{\odot m} \in \mathcal E_2(m^m\kappa^{m-1} \sigma )$.
  %(the term $e^{\log(m)-\log p}$ come from the vact that we have to bound the probability of the event $\{\exists i \in I \ | \ X_i \in \mathcal A_{X_i} \cap \mathcal B_\kappa\} $).
  We can then exploit Proposition~\ref{pro:concentration_concatenation_vecteurs_lineairement_concentres} to obtain
  \begin{align*}
    \left(\left(\sum_{i\in I} X_i\right)^{\odot m}\right)_{I \subset [m]} \in \mathcal E_2(m^m\kappa^{m-1} \sigma ) &
    % \overset{\mathcal A_X, \mathcal A_\kappa}
    % \left(\left(\sum_{i\in I} X_i\right)^{\odot m}\right)_{I \subset [m]} \overset{\mathcal A_X, \mathcal A_\kappa}\in \mathcal E_2 \left((\kappa m\sigma)^{m} (\log p)^{(m-1)/q} \right) \ | \ e^{\log p}&
    &\text{in } \ \left((\mathbb R^p)^{2^m}, \| \cdot \|_{\ell^\infty}\right),
  \end{align*}
  (note that $\#\{I \subset [m]\} = 2^m$)
  % where we recall that $2^m$ is the number of subsets of $[m]$. 
  % (the concentration zone of $((\sum_{i\in I} X_i)^{\odot m})_{I \subset [m]}$ is $\mathcal A_\kappa$).
  Thus summing the $2^m$ concentration inequalities, we can conclude from Equation~\eqref{eq:decomposition_x_1_odot_x_m}, and the Stirling formula $\frac{m^m}{m!} = \frac{e^m}{\sqrt{2\pi m}} +O(1)$ that:
  \begin{align*}
     (X_1\odot \cdots \odot X_m) \in \mathcal E_2 \left((2e\kappa)^{m-1} \sigma\right).
  \end{align*}

% A similar result to Theorem~\ref{the:concentration_convexe_produit_odot_Rp} holds for matrix product but to be set, one needs first to 
For the concentration of the matrix product, we introduce a new notion of concentration, namely the \textit{transversal convex concentration}. Let us give some definitions.
\begin{definition}\label{def:concentrtaion_transversale}
  
  Given a sequence of normed vector spaces $(E_n, \Vert \cdot \Vert_n)_{n\geq 0}$, a sequence of groups $(G_n)_{n\geq 0}$, each $G_n$ (for $n \in \mathbb N$) acting on $E_n$, a sequence of random vectors $(Z_n)_{n\geq 0} \in \prod_{n\geq 0} E_n$, a sequence of positive reals $(\sigma_n)_{n\geq 0} \in \mathbb R_+ ^{\mathbb N}$, we say that $Z =(Z_n)_{n\geq 0}$ 
  is convexly concentrated transversally to the action of $G$ with an observable diameter of order $\sigma$ and we note $Z \propto^T_G \mathcal E_2(\sigma)$ iff there exist two constants $C,c \leq O(1)$ such that $\forall n \in \mathbb N$ and for any $1$-Lipschitz, quasi-convex and $G$-invariant\footnote{For any $g \in G$ and $x \in E$, $f(x) = f(g \cdot x)$} function $f : E_n \to \mathbb R$,
  $\forall t>0:$\footnote{Once again, we point out that one could have replaced here $\mathbb E[f(Z_n)]$ by $f(Z_n')$ or $m_f$.} 
     \begin{center}
      $\mathbb P \left(\left\vert f(Z_n) - \mathbb E[f(Z_n)]\right\vert\geq t\right) \leq C e^{-c(t/\sigma_n)^2}$.
     \end{center}
\end{definition}
\begin{remark}\label{rem:concentration_convexe_implique_concentration_transversale}
  Given a normed vector space $(E,\|\cdot \|)$, a group $G$ acting on $E$ and a random vector $Z \in E$, we have the implication chain:
  \begin{align*}
    Z \propto \mathcal E_2(\sigma)&
    &\Longrightarrow&
    &Z \propto_c \mathcal E_2(\sigma)&
    &\Longrightarrow&
    &Z \propto_G^T \mathcal E_2(\sigma).
  \end{align*}
\end{remark}
 
Considering the actions:
\begin{itemize}
  \item $\mathfrak S_n$ on $\mathbb R^p$ where for $\sigma \in \mathfrak S_n$ and $x \in \mathbb R^p$, $\sigma \cdot x = (x_{\sigma(i)})_{1\leq i \leq p}$,
  \item $\mathcal O_{p,n} \equiv \mathcal O_{p} \times \mathcal O_{n}$ on $\mathcal M_{p,n}$ where for $(P,Q) \in \mathcal O_{p,n}$ and $M \in \mathcal M_{p,n}$, $(P,Q) \cdot M = PMQ$,
\end{itemize}
the convex concentration in $\mathcal M_{p,n}$ transversally to $\mathcal O_{p,n}$ can be expressed as a concentration on $\mathbb R^p$ transversally to $\mathfrak S_n$ thanks to the introduction the mapping $\sigma$ providing to any matrix the ordered sequence of its singular values~:
\begin{align*}
  \sigma \ : \
  \begin{aligned}[t]
    &\mathcal{M}_{p,n} &&\rightarrow && \hspace{1.4cm}\mathbb{R}^d_+ \\
    &M&&\mapsto&&(\sigma_1(M),\ldots, \sigma_d(M)).
  \end{aligned}&
  &\text{with } \ d = \min(p,n)
\end{align*}
(there exists $(P,Q) \in \mathcal O_{p,n}$ such that $M = P \Sigma(M) Q$, where $\Sigma \in \mathcal M_{p,n}$ has $\sigma_1(M)\geq \cdots \geq\sigma_d(M)$ on the diagonal).
\begin{theorem}[\cite{LED05}, Corollary 8.23. \cite{LOU19}, Theorem 2.44]\label{the:concentrtaion_transversale_matrice_vecteurs}
  Given a random matrix $Z \in \mathcal M_{p,n}$:
  \begin{align*}
    Z \propto^T_{\mathcal O_{p,n}} \mathcal E_2(\sigma)&
    &\Longleftrightarrow&
    &\sigma(Z) \propto^T_{\mathfrak S_d} \mathcal E_2(\sigma)
  \end{align*}
  (where the concentrations inequalities are implicitly expressed for euclidean norms: $\|\cdot\|_F$ on $\mathcal M_{p,n}$ and $\|\cdot\|$ on $\mathbb R^d$).
\end{theorem}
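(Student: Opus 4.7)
My plan is to transfer the concentration inequalities through the singular value decomposition, using a correspondence between admissible test functions on $\mathcal M_{p,n}$ and on $\mathbb R^d$. Writing $M = U D_{\sigma(M)} V$ with $U \in \mathcal O_p$, $V \in \mathcal O_n$ and $D : \mathbb R^d \hookrightarrow \mathcal M_{p,n}$ the natural diagonal embedding, every $\mathcal O_{p,n}$-invariant function $f$ on $\mathcal M_{p,n}$ is determined by its restriction $g_f(x) := f(D_x)$ to diagonal matrices, and sufficiently symmetric functions on $\mathbb R^d$ lift to $\mathcal O_{p,n}$-invariant functions on $\mathcal M_{p,n}$ via composition with $\sigma$. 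Both implications will then follow by checking that this correspondence preserves the 1-Lipschitz, quasi-convex and invariance properties of the test functions.

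For the implication ($\Leftarrow$), I would start from an $\mathcal O_{p,n}$-invariant, 1-Lipschitz, quasi-convex $f$ and set $g_f(x) = f(D_x)$. Since $D$ is a linear isometry from $(\mathbb R^d, \|\cdot\|)$ into $(\mathcal M_{p,n}, \|\cdot\|_F)$, $g_f$ is automatically 1-Lipschitz and, as the restriction of a quasi-convex function to a linear subspace, also quasi-convex. The $\mathfrak S_d$-invariance of $g_f$ would follow from the $\mathcal O_{p,n}$-invariance of $f$ because any coordinate permutation of $x$ can be written as $D_{\pi \cdot x} = P_\pi D_x Q_\pi$ with $P_\pi, Q_\pi$ permutation matrices in $\mathcal O_p \times \mathcal O_n$. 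The SVD identity $f(Z) = f(D_{\sigma(Z)}) = g_f(\sigma(Z))$ then transfers the hypothesized concentration of $g_f(\sigma(Z))$ to $f(Z)$.

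For the implication ($\Rightarrow$), given a $\mathfrak S_d$-invariant, 1-Lipschitz, quasi-convex $g : \mathbb R^d \to \mathbb R$, I would consider the lift $f_g(M) = g(\sigma(M))$. Invariance of $f_g$ under $\mathcal O_{p,n}$ is immediate, and Mirsky's inequality $\|\sigma(M) - \sigma(N)\| \leq \|M - N\|_F$ combined with the 1-Lipschitz character of $g$ ensures $f_g$ is 1-Lipschitz. The hard part will be quasi-convexity of $f_g$. My approach is to exploit that $\sigma$ lands in the non-negative cone $\mathbb R_+^d$, so only the restriction of $g$ to $\mathbb R_+^d$ matters; one may replace $g$ by its absolutely symmetric envelope $\bar g(x) = g((|x_i|)_i^\downarrow)$, which retains the 1-Lipschitz and quasi-convex properties via standard rearrangement arguments. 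The sublevel sets $\{\bar g \leq t\}$ are then convex absolutely symmetric subsets of $\mathbb R^d$, and I would invoke the singular-value analog of Davis' theorem (as developed by Lewis and cited in \cite{LED05}, Corollary 8.23) to conclude that their preimages $\sigma^{-1}(\{\bar g \leq t\})$ are convex in $\mathcal M_{p,n}$. This establishes quasi-convexity of $f_g$, and applying the transversal concentration of $Z$ to $f_g$ yields the concentration of $g(\sigma(Z))$.

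The main obstacle I expect is this last convexity transfer: both checking that the absolutely symmetric envelope $\bar g$ genuinely inherits quasi-convexity (a subtle rearrangement argument), and invoking the Davis/Lewis spectral convexity result to ensure that preimages of absolutely symmetric convex sets under $\sigma$ are convex in $\mathcal M_{p,n}$. Once these ingredients are in hand, the rest is a routine assembly of the SVD identity, Mirsky's inequality and the transversal concentration hypothesis applied to the lifted or restricted test functions.
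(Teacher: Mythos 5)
Your ($\Leftarrow$) direction is sound and is the easy half: restricting an $\mathcal O_{p,n}$-invariant, quasi-convex, $1$-Lipschitz $f$ to the diagonal embedding does produce an admissible symmetric test function on $\mathbb R^d$, and the SVD identity transfers the concentration. (Note the paper itself gives no proof of this theorem -- it is quoted from \cite{LED05}, Corollary 8.23 and \cite{LOU19}, Theorem 2.44 -- so only the statement can be compared.)

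The gap is in your ($\Rightarrow$) direction, exactly at the step you flagged, and it is fatal to the lifting strategy rather than merely delicate. The absolutely symmetric envelope $\bar g(x) = g((|x_i|)_i^{\downarrow})$ of a symmetric quasi-convex $g$ need \emph{not} be quasi-convex: take $g(x) = -\frac{1}{\sqrt d}\sum_i x_i$, which is linear, symmetric and $1$-Lipschitz; then $\bar g(x) = -\frac{1}{\sqrt d}\sum_i |x_i|$ has sublevel sets which are complements of $\ell^1$-balls. Worse, for this $g$ the lift itself, $g\circ\sigma = -\frac{1}{\sqrt d}\|\cdot\|_*$, is genuinely not quasi-convex on $\mathcal M_{p,n}$ (its sublevel sets are complements of nuclear-norm balls), so no repair of the envelope/Davis--Lewis step can make $f_g$ an admissible matrix test function for every admissible $g$; the Lewis-type convexity transfer requires the sublevel sets to be convex \emph{and} absolutely symmetric, which is precisely what fails. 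One might hope to rescue such cases by the two-sidedness of the concentration inequality (here $+\|\cdot\|_*$ is convex and invariant), but this does not cover the general case either: for $g(x) = \left\vert \frac{1}{\sqrt d}\sum_i x_i - a \right\vert$, which is convex, symmetric and $1$-Lipschitz, the lift has nuclear-norm annuli as sublevel sets and neither $g\circ\sigma$ nor $(-g)\circ\sigma$ is quasi-convex. Hence your argument only proves concentration of $g(\sigma(Z))$ for the subclass of $g$ whose lift happens to be quasi-convex, not for the full class demanded by the definition of $\sigma(Z)\propto^T_{\mathfrak S_d}\mathcal E_2(\sigma)$; a correct proof of this implication must argue differently (e.g. through the median/convex-set-enlargement characterization of transversal concentration and a comparison of enlargements adapted to the ordered nonnegative cone, as in the cited references), rather than by lifting each test function pointwise.
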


\begin{proof}[Proof of Theorem~\ref{the:concentration_lineaire_produit_matrice_convexement_concentres}]
  % As for the proof of Theorem~\ref{the:concentration_convexe_produit_odot_Rp}, we consider in a first time that all the 
  % Let us first consider the case where $X$ is a square matrix of $\mathcal M_{p}$.
  % To obtain the linear concentration of $X^m$, what we would like to show is the concentration of any $\tr(AX^m)$, for a given matrix $A \in \mathcal M_{p}$ such that $\|A\|_F\leq 1$
  % Let us note $P\sigma(X)Q$, its singular decomposition with $P, Q \in \mathcal O_{p}$ and $\sigma(X) \in \mathcal D_{p}$.
  Let us start to study the case where $ X_1 = \cdots = X_m \equiv X \in \mathcal{M}_{n}$ and $X \propto \mathcal E_2$ in $(\mathcal{M}_{n}, \|\cdot \|_F)$. 
  % We know that $(X,\ldots, X) \propto \mathcal E_2(\sigma)$ in $((\mathcal{M}_{n})^m, \|\cdot \|_{F, \ell^\infty})$ with, for any $M= (M_1,\ldots,M_m) \in (\mathbb S_p^+)^m$, $\|M\|_{\ell^\infty} = \sup_{1\leq i\leq m} \|M_i\|$, as it is a $1$-Lipschitz transformation\footnote{It is not the case if we endow $(\mathcal{M}_{n})^m$ with the norm $\|\cdot \|_{F} : M_1,\ldots,M_p \mapsto \sqrt{\|M_1\|_F^2 + \cdots + \|M_m\|_F^2}$} of $X$.
  % Theorem~\ref{the:concentrtaion_transversale_matrice_vecteurs} gives us the concentration:
  We know from Theorem~\ref{the:concentrtaion_transversale_matrice_vecteurs} that:
  \begin{align*}
    \sigma(X) \propto_{\mathfrak S_p}^T \mathcal E_2,
  \end{align*}
  and therefore, as a $\sqrt{n}$-Lipschitz linear observation of $\sigma(X)^{\odot m} \in \mathcal E_2 \left( \kappa^{m-1} \sigma\right)$ (see Theorem~\ref{the:concentration_convexe_produit_odot_Rp}), $\tr(X^m)$ follows the concentration:
  % and therefore since $\tr(X^m) = \sum_{i=1}^p \sigma_i(X)^m$ which can be written thanks to \eqref{eq:decomposition_convexe_puissance} as a combination of four quasi-convex, $\mathfrak S_p$-invariant observation of $\sigma(X)$. Those four observations are further more $\sqrt{p}\kappa^{m-1}$-Lipschitz on $\mathcal A_\kappa$, thus:
\begin{align*}
    \tr(X^m) = \sum_{i=1}^p \sigma_i(X)^m \in \mathcal E_2 \left( \sqrt n \kappa^{m-1} \sigma\right) .
\end{align*}

  Now, we consider the general setting where we are given $m$ matrices $X_1,\ldots, X_m$, a deterministic matrix $A \in \mathcal{M}_{n_n,n_0}$ satisfying $\|A\|\leq 1$, and we want to show the concentration of $tr(AX_1,\cdots, X_m)$. First note that we stay in the hypotheses of the theorem if we replace $X_1$ with $AX_1$, we are thus left to show the concentration of $\tr(X_1 \cdots X_m)$.
  We can not employ again Lemma~\ref{lem:decomposition_poly_symetrique} without a strong hypothesis of commutativity on the matrices $X_1,\ldots, X_n$. Indeed, one could not have gone further than a concentration on the whole term $\sum_{\sigma \in \mathfrak S_p}\tr(X_{\sigma(1)} \cdots X_{\sigma(m)})$.
%    However, if $m$ is small, say, if $m \leq O(1) $, and the 
% % \begin{align*}
% %     X_1 \cdots X_m ,
% %    \end{align*}
%    % Besides, if the 
%    matrices $X_1,\ldots, X_m$ have different sizes, w
However, we can still introduce the random matrix
    \begin{align*}
    Y = \left( \begin{array}{cccc}
    0&X_{m-1} &&  \\
    & \ddots&\ddots& \\
    & &\ddots&X_1 \\
    X_m& &&0  \end{array} \right)&
    &\text{then}&
    &Y^m = \left( \begin{array}{cccc}
    0&X^{m}_1 &&  \\
    & \ddots&\ddots& \\
    & &\ddots&X_{3}^{2}\\
    X_{2}^{1}& &&0  \end{array} \right),
    \end{align*}
    where for $i,j \in \{2, \ldots, m-1\}$, $X^j_i \equiv X_i X_{i+1} \cdots X_m X_{1}\cdots X_j$. Since $Y \in \mathcal{M}_{n_0+\cdots+n_m}$ satisfies $Y \propto \mathcal E_2(\sigma)$ and $\|Y\|\leq \kappa$, the first part of the proof provides the concentration $Y^m \in \mathcal E_2 \left(\kappa^{m-1}\sigma\sqrt{n_0+\cdots+n_m}\right)$ in $\left(\mathcal M_n, \| \cdot \|_*\right)$ which directly implies the concentration of $X_1^m= X_1\cdots X_m$.

\end{proof}

\end{proof}

\section{Alternative proof of Proposition~\ref{pro:Concentration_lineaire_Q_proche_spectre}}

% \begin{proof}
  We are going to show the concentration of the real part and the imaginary part of $Q^z$, where:
  \begin{align*}
    &\Re(Q^z) = Q^z \left(\Re(z) I_p - \frac{1}{n}XX^T\right)\bar{Q}^z =(\Re(z)-z) |Q^z|^2 + \bar{Q}^z \\
    &\Im(Q^z) = \Im(z) |Q^z|^2
  \end{align*}
  Since it is harder, we will only prove the linear concentration of $|Q^z|^2 = (\Im(z)^2 + (\Re(z) - \frac{1}{n}XX^T)^2)^{-1}$. For that we are going to decompose, for any matrix $A \in \mathcal M_{p}$ with unit spectral norm, the random variable $\tr(A|Q^z|^2)$ as the sum of convex and $O(1/\sqrt n)$-Lipschitz mappings of $X$. Let us introduce the two mappings, $\psi : \mathcal M_{p} \to \mathcal M_{p}$ and $\phi : \mathcal M_{p,n} \to \mathcal M_{p}$ defined for any $M \in \mathcal M_{p}$ and $B \in \mathcal M_{p, n}$ with:
  \begin{align*}
    \psi(M) = (\Im(z)^2 + M)^{-1}&
    &\phi(B) = \Re(z)^2 - \frac{2\Re(z)}{n}BB^T + \frac{1}{n^2}BB^TBB^T.
  \end{align*}
  We then have the identity $\tr(A Q^z) = \tr \left(A\psi\circ\phi \left(X\right)\right)$. 
  % We already know from Proposition~\ref{pro:concentration_resolvente_1} that $\tr \left(A\psi\circ\phi \right)$ is $O(\|A\|_F/\sqrt n) = O(1)$-Lipschitz (for the Frobenius norm). 

  We then then look at the second derivative of $\psi\circ\phi$ to prove convex properties on $\tr(A\psi\circ\phi)$.
  % \begin{align*}
  %   \tr(A Q^z) = \tr \left(A\psi\circ\phi \left(\frac{X}{\sqrt n}\right)\right)
  % \end{align*}
  Given $H \in \mathcal M_{p}$, let us compute:
  \begin{align*}
    \restriction{d\psi}{M} \cdot H = -\phi(M)H\phi(M)&
    &\restriction{d^2\psi}{M} \cdot (H,H) = 2\phi(M)H\phi(M)H\phi(M),
  \end{align*}
  and given $K \in \mathcal M_{p,n}$:
  \begin{align*}
    &\restriction{d\phi}{B} \cdot K = -\frac{2\Re(z)}{n} L(B,K) + \frac{1}{n^2}P(K,B)\\
    &\restriction{d^2\phi}{B} \cdot (K,K) = -\frac{2\Re(z)}{n} KK^T + \frac{2}{n^2}P_2(K,B),
  \end{align*}
  where:
  \begin{itemize}
    \item $L(B,K) = BK^T +KB^T$
    % \item $L_2(B,K) = KK^T$
    \item $P(B,K) = KB^TBB^T+ BK^TBB^T + BB^TKB^T + BB^TBK^T$
    \item $P_2(B,K) = KK^TBB^T+ KB^TKB^T + KB^TBK^T+ BK^TKB^T + BK^TBK^T + BB^TKK^T$.
  \end{itemize}
  First we deduce from the expression of the first derivative and thanks to Lemma~\ref{lem:Q_borne} that, on $X(\mathcal A)$, $\tr \left(A\psi\circ\phi \right)$ is a $O(\|A\|_F/\sqrt n) = O(1)$-Lipschitz transformation of $X$ (for the Frobenius norm).

  Second,choosing $M = \phi(B)$:
  \begin{align*}
    \restriction{d^2\psi\circ\phi}{B} \cdot (K,K) 
    &= \restriction{d^2\psi}{m
    M} \cdot \left(\restriction{d\phi}{B} \cdot K,\restriction{d\phi}{B} \cdot K\right) + \restriction{d^\psi}{m
    M} \cdot \left(\restriction{d^2\phi}{B} \cdot (K,K)  \right)\\
    &= 2\phi(M) \left(\restriction{d\phi}{B} \cdot K\right)\phi(M) \left(\restriction{d\phi}{B} \cdot K\right)\phi(M) \\
    &\hspace{0.5cm} + \frac{2\Re(z)}{n}\phi(M)  KK^T\phi(M) - \frac{2}{n^2}\phi(M) P_2(K,B) \phi(M).
  \end{align*}
  In this identity the only term raising an issue is $\frac{2}{n^2}\phi(M) P_2(K,B) \phi(M)$ because $P_2(K,B)$ is not nonnegative symmetric. 
  % Still, one can bound for any $x \in \mathbb R^p$:
  % \begin{align*}
  %   2x^TKK^TBB^Tx \leq x^TKK^Tx + x^TBB^TKK^TBB^Tx \leq 2 x^T 
  % \end{align*}
  We can however still bound:
  \begin{align*}
    \frac{12}{n^2} \tr \left(A \phi(M) P_2(K,B) \phi(M)\right) \leq \frac{12}{n^2}\|A\|\|\phi(M)\|^2 \|B\|^2 \|K\|_F^2 \leq O \left(\frac{1}{n}\tr(KK^T)\right),
  \end{align*}
  for $B \in X(\mathcal A_Q)$ (in particular $\|B\| \leq O(\sqrt n)$ and $\|\phi(M)\| \leq O(1)$). Now, if we note $h : \mathcal M_{p,n} \to \mathbb R$ defined for any $B \in \mathcal M_{p,n}$ as $h(B) = \frac{1}{n} \tr(BB^T)$, we see that $\frac{1}{n}\tr(KK^T) = d^2h(B)\cdot (K,K)$ is a quadratic functional on $K$, $h$ is thus convex. It is beside $O(1)$-Lipschitz on $X(\mathcal A_Q)$ (for the Frobenius norm). Assuming in a first time that $A$ is nonnegative symmetric and choosing a constant $C \leq O(1)$ sufficiently big, we show that $B \mapsto \tr(A \psi\circ \phi(B)) +C h(B)$ is convex and $O(1)$-Lipschitz on $X(\mathcal A_Q)$ like $C h$. We have thus the concentration:
  \begin{align*}
     \left( \tr(A |Q^z|^2) \ | \ \mathcal A  \right) \in \mathcal E_2 .
   \end{align*} 
   Now, given a general matrix $A \in \mathcal M_{p}$, we decompose $A = A_+ - A_- + A_0$ where $A_+$ and $A_-$ are nonnegative symmetric and $A_0$ is anti-symmetric, in that case $\tr(A |Q^z|^2) = \tr(A_+ |Q^z|^2) - \tr(A_- |Q^z|^2)$, and we can conclude the same way. That eventually gives us the concentration of the proposition.
% \end{proof}

\bibliographystyle{alpha}

\bibliography{biblio}

\begin{thebibliography}{BLM13}

\bibitem[Ada11]{ADA11}
Radoslaw Adamczak.
\newblock On the marchenko-pastur and circular laws for some classes of random
  matrices with dependent entries.
\newblock {\em Electronic Journal of Probability}, 16:1065--1095, 2011.

\bibitem[Ada15]{ADA14}
Radosław Adamczak.
\newblock A note on the hanson-wright inequality for random vectors with
  dependencies.
\newblock {\em Electronic Communications in Probability}, 20(72):1--13, 2015.

\bibitem[BLM13]{bou13}
Stéphane Boucheron, Gabor Lugosi, and Pascal Massart.
\newblock {\em Concentration inequalities}.
\newblock Oxford University Press, 2013.

\bibitem[Gro79]{GRO79}
Mikhail Gromov.
\newblock Paul levy’s isoperimetric inequality.
\newblock {\em preprint de l'IHES}, 1979.

\bibitem[GZ00]{Gui00}
Alice Guillonet and Olivier Zeitouni.
\newblock Concentration of the spectral measure for large matrices.
\newblock {\em Electronic Communications in Probability}, 2000.

\bibitem[HT21]{Hua21}
Han Huang and Konstantin Tikhomirov.
\newblock On dimension-dependent concentration for convex lipschitz functions
  in product spaces.
\newblock {\em arXiv:2106.06121v1}, 2021.

\bibitem[LC19]{LOU19}
Cosme Louart and Romain Couillet.
\newblock Concentration of measure and large random matrices with an
  application to sample covariance matrices.
\newblock {\em arXiv:1805.08295}, 2019.

\bibitem[LC21a]{LOU21HV}
Cosme Louart and Romain Couillet.
\newblock Concentration of measure and generalized product of random vectors
  with an application to hanson-wright-like inequalities.
\newblock {\em arXiv preprint arXiv:2102.08020}, 2021.

\bibitem[LC21b]{LOU21RHL}
Cosme Louart and Romain Couillet.
\newblock Spectral properties of sample covariance matrices arising from random
  matrices with independent non identically distributed columns.
\newblock {\em arXiv preprint}, 2021.

\bibitem[Led05]{LED05}
Michel Ledoux.
\newblock {\em The concentration of measure phenomenon}.
\newblock Number~89. American Mathematical Soc., 2005.

\bibitem[Lé51]{Le51}
Paul Lévy.
\newblock {\em Problemes concrets d'analyse fonctionnelle.}
\newblock Gauthier-Villars, 1951.

\bibitem[Mil71]{MIL71}
Vitali Milman.
\newblock Asymptotic properties of functions of several variables that are
  defined on homogeneous spaces.
\newblock {\em Doklady Akademii Nauk SSSR}, 199:1247--1250, 1971.

\bibitem[MS11]{MEC11}
Mark~W. Meckes and Stanislaw~J. Szqrek.
\newblock Concentration for noncommutative polynomials in random matrices.
\newblock {\em Proceedings of the American Mathematical Societ}, 2011.

\bibitem[Nou09]{ELK09}
El~Karoui Nourredine.
\newblock Concentration of measure and spectra of random matrices: applications
  to correlation matrices, elliptical distributions and beyond.
\newblock {\em The Annals of Applied Probability}, 19(6):2362--2405, 2009.

\bibitem[Rol06]{Rol06}
Robert Rolland.
\newblock Fonctions de möbius - formule de rota.
\newblock 2006.

\bibitem[Tal95]{TAL95}
Michel Talagrand.
\newblock {\em Concentration of Measure and Isoperimetric Inequalities in
  product spaces}.
\newblock Publications math\'ematiques de l'I.H.E.S., tome 81, 1995.

\bibitem[Tao12]{TAO12}
Terence Tao.
\newblock {\em Topics in random matrix theory}, volume 132.
\newblock American Mathematical Soc., 2012.

\bibitem[VW14]{VU2014}
Van Vu and Ke~Wang.
\newblock Random weighted projections, random quadratic forms and random
  eigenvectors.
\newblock {\em Random Structures and Algorithms}, 2014.

\end{thebibliography}

\end{document}